\newtheorem{theorem}{Theorem}[section]
\newtheorem{lemma}[theorem]{Lemma}
\newtheorem{proposition}[theorem]{Proposition}
\theoremstyle{definition}
\theoremstyle{remark}
\newtheorem{remark}[theorem]{Remark}
\numberwithin{equation}{section}
\begin{document}
\def\A{{\mathbb A}}
\def\C{{\mathbb C}}
\def\D{{\mathcal{D}}}
\def\E{{\mathscr{E}}}
\def\F{{\mathscr{F}}}
\def\G{{\mathscr{G}}}
\def\H{{\mathscr{H}}}
\def\I{{\mathcal{I}}}
\def\J{{\mathscr{I}}}
\def\K{{\mathcal{K}}}
\def\L{{\mathscr{L}}}
\def\M{{\mathscr{M}}}
\def\N{{\mathscr{N}}}
\def\O{{\mathcal{O}}}
\def\P{{\mathbb P}}
\def\Q{{\mathbb Q}}
\def\R{{\mathbb R}}
\def\S{{\mathscr{S}}}
\def\V{{\mathcal{V}}}
\def\Z{{\mathbb Z}}
\def\a{{\mathfrak{a}}}
\def\b{{\mathfrak{b}}}
\def\p{{\mathfrak{p}}}
\def\q{{\mathfrak{q}}}
\def\m{{\mathfrak{m}}}
\def\Cl{{\text{Cl }}}
\def\CL{{\text{Cl }}}
\def\Pic{{\text{Pic}}}
\def\Hom{{\text{Hom}}}
\def\Der{{\text{Der}}}
\def\deg{{\text{deg }}}
\def\Ext{{\text{Ext}}}
\def\Aut{{\text{Aut}}}
\def\Ab{{\mathfrak{Ab}}}
\def\Spec{{\text{Spec }}}
\def\Div{{\text{Div }}}
\def\coker{{\text{coker }}}

\title{Towards the Classification of Weak Fano Threefolds with $\rho = 2$}

\author{Joseph W. Cutrone}
\address{Center for Talented Youth, Johns Hopkins University, 5801 Smith Ave, Baltimore, MD 21209}
\email{jcutrone@math.jhu.edu}

\author{Nicholas A. Marshburn}
\address{Department of Mathematics, Johns Hopkins University, 3400 N. Charles St., Baltimore, MD 21218}
\email{marshbur@math.jhu.edu}

\subjclass[2010]{}

\begin{abstract}
In this paper, examples of type II Sarkisov links between smooth complex projective Fano threefolds with Picard number one are provided.  To show examples of these links, we study smooth weak Fano threefolds $X$ with Picard number two and with a divisorial extremal ray.  We assume that the pluri-anticanonical morphism of $X$ contracts only a finite number of curves.  The numerical classification of these particular smooth weak Fano threefolds is completed and the geometric existence of some numerical cases is proven.
\end{abstract}

\maketitle
\tableofcontents

\newpage
\section{Introduction}
The motivation for this paper\footnote{Partially supported by NSF DMS-1001427} starts from the last line in the second paragraph on page 2 of \cite{JPR07}: ``However, due to the complexity of the problem...we shall not consider the case that both contractions are birational, and hope to come back to that case later."  The authors seemed to have abandoned this project and this paper is our attempt to continue what they started.  In this paper, we finish the numerical classification of weak Fano threefolds $X$ with Picard number, denoted $\rho(X)$, equal to two and comment on the existence of these cases.  However, the geometric realization for several of these numerical cases still remains open.

Recall that a smooth threefold $X$ is \textit{Fano} if its anti-canonical divisor $-K_X$ is ample.  A variety $X$ is said to be a \textit{weak Fano} or an \textit{almost Fano} threefold if its anticanonical divisor $-K_X$ is both nef and big, but not necessarily ample. The classification of smooth Fano threefolds was completed by Fano, Iskovskih, Shokurov, Mori and Mukai.  For smooth Fano threefolds $Y$ with $\rho(Y) = 2$, there are 36 families (see \cite{IP99}). Although much progress has been made, the complete classification of smooth weak Fano threefolds $X$ with $\rho(X) = 2$ is still open.  In comparison, there are already over one hundred known possible families of smooth weak Fano threefolds, with about a hundred cases still open.  In \cite{JPR05}, complete classification was achieved when the anti-canonical morphism $\psi_{|-mK_X|}: X \rightarrow X'$ contracts a divisor.  In \cite{JPR07} and \cite{Tak09}, the case when the anticanonical morphism $\psi_{|-mK_X|}: X \rightarrow X'$ contracts only a finite number of curves was studied (i.e., when $\psi_{|-mK_X|}$ is \textit{small}).

The finite list of all possible smooth weak Fano threefolds with Picard number two that can numerically exist was determined for all combinations of Mori extremal contractions that can appear in a Sarkisov link except for the case when both contractions are divisorial. In \cite{JPR07}, the authors find the numerical classification and geometrically construct many cases when the Mori extremal contractions are not both birational.  It should be noted that their classification is incomplete.  In particular, the Bordiga threefold of degree 6 in $\P^5$ (see e.g., \cite{Ott92}) is known to have a structure of a $\P^1$-bundle but does not appear in the \cite{JPR07} classification. Takeuchi \cite{Tak09} completes the classification assuming one extremal contraction is a del Pezzo fibration of degree not equal to six.  Many cases with degree equal to six are still open and can be found in \cite{JPR07}. Our paper completes the numerical classification of smooth weak Fano threefolds $X$ with $\rho(X) = 2$ where both extremal contractions occurring in a Sarkisov link are divisorial. In addition, we comment on the existence of these cases, providing specific constructions for existence or proving non-existence.  Like the previous authors, the geometric realization of some of our new cases remains open.  Like the previous authors, we hope to return to these cases later.

The main theorem of this paper is the following:

\begin{theorem}
A smooth weak Fano threefold occurring as a central object of a Sarkisov link of type II between two terminal Fano threefolds with Picard number one must appear on our Tables with the corresponding numerical invariants.  These can all be found in Tables 1-9 in Section \ref{Tables}.
\end{theorem}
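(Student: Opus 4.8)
The plan is to carry out a systematic numerical bookkeeping over all possibilities for a type II Sarkisov link in which both extremal contractions are divisorial, and to show that the finitely many surviving numerical profiles are exactly those tabulated. The setup is the standard Sarkisov diagram: the weak Fano $X$ sits at the center, with two divisorial extremal contractions $\sigma\colon X\to Y$ and $\sigma'\colon X'\to Y'$ after the flop across the small anticanonical morphism, where $Y$ and $Y'$ are the two terminal Fano threefolds of Picard number one. First I would fix notation for the invariants that must be constrained: on each side the exceptional divisor $E$ (resp. $E'$) of the divisorial contraction, its image, the discrepancy, and the Mori classification of the contraction type (contraction to a point with various local structures, or contraction to a curve). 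Here I would invoke the local classification of divisorial extremal contractions on smooth (or terminal) threefolds due to Mori to restrict $\sigma$ and $\sigma'$ to a short list of types (types $E_1$ through $E_5$), each carrying specific discrepancy and intersection data.

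Next I would write down the numerical invariants of $X$ that transform predictably under the link: the anticanonical degree $(-K_X)^3$, the Euler characteristics, the invariants $\chi(\O_X)$ and the Hodge-theoretic data, and the intersection numbers $-K_X\cdot E^2$, $E^3$, and the analogous primed quantities. The key identities come from (i) the behavior of $-K$ under a divisorial contraction, $-K_X = \sigma^*(-K_Y) - a E$ with $a$ the discrepancy determined by the contraction type, and the matching relation on the $X'$ side; (ii) the invariance of $(-K)^3$ under the flop, since a flop preserves the canonical class and its self-intersection; and (iii) the constraint that $Y$ and $Y'$ each have Picard number one, so their anticanonical degrees $(-K_Y)^3$ and $(-K_{Y'})^3$ lie in the known finite range for terminal Fano threefolds of $\rho=1$. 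Combining these yields a finite system of Diophantine-type relations linking $(-K_Y)^3$, $(-K_{Y'})^3$, the two contraction types, and the auxiliary intersection numbers. I would also use the weak Fano hypothesis, namely that $-K_X$ is nef and big, together with the assumption that the pluri-anticanonical morphism is small, to rule out configurations where the flopping curves fail to exist or where nefness is violated.

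The enumeration then proceeds by running over all pairs of Mori contraction types for $(\sigma,\sigma')$, and for each pair solving the resulting numerical constraints for the admissible values of the degrees and intersection numbers. For each candidate I would check the full battery of necessary inequalities: nefness of $-K_X$ (via $(-K_X)\cdot C\ge 0$ tested against the generators of the Mori cone, whose two extremal rays are the flopping-curve class and the class contracted by $\sigma$), bigness ($(-K_X)^3>0$), integrality and positivity of the discrepancy-adjusted intersection numbers, and consistency of the Euler-characteristic and $h^{1,2}$ computations with a genuine threefold. Cases that survive all these tests populate Tables 1--9; cases that fail are discarded. The main obstacle I anticipate is the sheer combinatorial volume together with the subtlety of the intersection-theoretic constraints on the side of $X'$ after the flop: because the flop changes the intersection numbers $E^3$ and $-K\cdot E^2$ in a way that must be tracked through the chain of proper transforms, one must carefully relate the invariants computed on $X$ to those computed on $X'$, and it is precisely in this translation that numerical cases can masquerade as admissible when they are in fact inconsistent. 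Ensuring that every surviving profile is genuinely realizable as a coherent numerical package (even before asking about geometric existence) is where the bulk of the care, and the risk of error, lies.
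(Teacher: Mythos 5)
Your overall strategy coincides with the paper's: invoke Mori's classification of divisorial contractions (E1--E5), use flop-invariance of $(-K)^3$ and the classification of Fano threefolds with $\rho=1$ to get bounds, derive Diophantine relations, enumerate pairs of contraction types, and prune with integrality and Hodge-theoretic checks. However, there is a genuine gap at the point where the two sides of the link must be glued together numerically. The Diophantine system only closes up once you express the strict transform of the second exceptional divisor in the basis of the first side, $\widetilde{E^+}=\alpha(-K_X)+\beta E$, and the coefficients $\alpha,\beta$ are \emph{a priori} unbounded rational numbers; your plan to ``run over all pairs of types and solve the constraints'' is not a finite search without controlling them. The paper's essential technical input is precisely this: for E1--E1 links, a Picard-index argument (computing the order of $\Pic(X)/\langle -K_X,E\rangle \cong \Z/r\Z$ on both sides of the flop) forces $\beta^+=-r/r^+$, and then $\alpha^+$ is pinned down by an explicit formula from $\sigma = K_X^2\cdot\widetilde{E}$; for the E2/E3/E4/E5 cases, separate arguments (intersecting with a line through the blown-up point, or with a flopping curve meeting $E$ transversally) show $\alpha,\beta\in\Z$ and bound them, which in the symmetric cases immediately forces $\beta=\beta^+=-1$ and $\alpha=\alpha^+$ and hence that the two contractions have the same type. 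Without some substitute for these lemmas your enumeration never terminates.

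A secondary inaccuracy: you say the flop changes both $E^3$ and $-K\cdot E^2$ and that both must be tracked. In fact the paper's Lemma on intersection numbers shows that \emph{any} triple product involving $K_X$ is preserved under the flop (a general anticanonical divisor avoids the finitely many flopping curves since $|-K_X|$ is basepoint free, so the projection formula applies); it is exactly this preservation of $K_X\cdot D\cdot D'$ that produces the Diophantine equations. Only the pure triple product $E^3$ jumps, by the strictly positive integer defect $e=E^3-\widetilde{E}^3$, which the paper then exploits as an additional check ($e>0$ and $r^3\mid e$) rather than as an obstacle.
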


\begin{remark}
If both extremal morphisms contract a divisor to a curve, at present time there are 111 numerical links with 59 links proven to exist and 14 proven to not exist. These can be found in Tables 1-3  (\ref{tb:E1E1Table1}).  If both extremal morphisms contract a divisor to a point, there are 6 numerical links with 3 previously known to exist and one known not to exist. These can be found in Tables 7-9 (\ref{tb:E2E2Table}). If one extremal morphism contracts a divisor to a curve and the other extremal morphism contracts a divisor to a point, there are only 12 possible cases, all proven to geometrically exist.  These can be found in Tables 4-6 (\ref{tb:E1E2Table}).
\end{remark}

\subsection{Notations and Assumptions}
We will use similar notation as that of \cite{JPR07}.  Throughout this paper, we study complex projective threefolds $X$ satisfying the following assumptions:

\begin{tabbing}
\hspace*{.5cm} \=
    \+ i)   $X$ is smooth; \\
    ii)  $-K_X$ is nef and big (i.e., $X$ is a \textit{weak Fano} variety); \\
    iii) $X$ has finitely many $K_X$-trivial curves (i.e., $-K_X$ is big in codimension 1); \\
    iv)  $\rho(X)=2$; \\
    v) The linear system $|-K_X|$ is basepoint free;\\
    vi) The \textit{weak Fano index} $r_X$ of $X$ is 1 (i.e., $-K_X$ can not be written as $rH$ with $r > 1$).
\end{tabbing}

These varieties appear as smooth central objects of Sarkisov links between terminal Fano varieties with Picard number one. Conditions (i) - (iv) define smooth centers and conditions (v) and (vi) are special for our situation. Smooth centers not satisfying condition (v) and (vi) have already been classified. Classifying these links is a step toward classifying all birational maps between terminal Fano varieties with Picard number one.

Assumptions (ii) and (iv) above imply that $X$ has two extremal contractions: a $K_X$-negative contraction $\phi$ and a $K_X$-trivial contraction $\psi$. Assumptions (iii) and (v) imply that $\psi$ is a small nontrivial birational contraction induced by the linear system $|-K_X|$ (the \textit{anticanonical contraction}). By \cite{Ko89}, $\psi$ induces a flop $\chi$, and we obtain the following diagram:

\begin{equation}
\label{Firstfig}
\xymatrix{X \ar@{-->}[rr]^{\chi} \ar[d]^{\phi} \ar[dr]^{\psi}& & X^{+} \ar[d]^{\phi^{+}} \ar[dl]_{\psi^{+}} \\
          Y & X' & Y^{+}}
\end{equation}

In the above diagram, $\chi$ is a flop which is an isomorphism outside of the exceptional locus Exc($\psi$) and $X^{+}$ satisfies conditions (i)-(vi) above. The morphism $\phi^+$ is a $K_{X^+}$-negative extremal contraction and $\psi^{+}$ is the anticanonical morphism. The variety $X'$ is a terminal Gorenstein Fano threefold with Picard number one, but is not $\Q$-factorial since $\psi$ is small. Note that, in contrast to Fano varieties with Picard number two, we must perform a flop before obtaining a second extremal contraction.  The diagram represents a Sarkisov link of type II between the Mori fibrations $Y$/Spec $\C$ and $Y^+$/Spec $\C$.

\subsection{History}
Jahnke, Peternell, and Radloff \cite{JPR05} considered the case when the anticanonical contraction $\psi$ is divisorial.  These cases do not result in Sarkisov links and thus we make assumption (iii) above.

We are mainly interested in the cases where the extremal contractions on both sides of the diagram are divisorial.  The classification of these extremal rays was completed by Mori.
\begin{theorem} (Mori (1982))
\label{FirstMainThm}
Let $X$ be a smooth three dimensional projective variety.  Let $R$ be an extremal ray on $X$, and let $\phi: X \rightarrow Y$ be the corresponding extremal contraction.  Then only the following cases are possible:
\begin{enumerate}
\item $R$ is not numerically effective.  Then $\phi: X \rightarrow Y$ is a divisorial contraction of an irreducible exceptional divisor $E \subset X$ onto a curve or a point.  In addition, $\phi$ is the blow-up of the subvariety $\varphi(E)$ (with the reduced structure).  All the possible types of extremal rays $R$ which can occur in this situation are listed in the following table, where $\mu(R)$ is the length of the extremal ray $R$ (that is, the number min$\{(-K_X)\cdot C \, | \,C \in R$ is a rational curve$\})$ and $l_R$ is a rational curve such that $-K_X \cdot l_R = \mu(R)$ and $[l_R] = R$.
   $$
    \begin{tabular}{|c|l|c|l|}
      \hline
      Type of $R$ & $\phi$ and $E$  & $\mu(R)$ & $l_R$  \\ \hline \hline
      E1 & $\phi(E)$ is a smooth curve,                     & 1 & a fiber of a ruled surface $E$            \\
         & and $Y$ is a smooth variety                      &   &                                           \\ \hline
      E2 & $\phi(E)$ is a point, $Y$ is a                   & 2 & a line on $E \simeq \P^2$                 \\
         & smooth variety, $E \simeq \P^2$, and             &   &                                           \\
         & $\O_E(E) \simeq \O_{\P^2}(-1)$                   &   &                                           \\ \hline
      E3 & $\phi(E)$ is an ordinary double                  & 1 & $s \times \P^1$ or $\P^1 \times t$ in $E$ \\
         & point, $E \simeq \P^1 \times \P^1$ and           &   &                                           \\
         & $\O_E(E) \simeq \O_{\P^1 \times \P^1}(-1,-1)$    &   &                                           \\ \hline
      E4 & $\phi(E)$ is a double (cDV)-                     & 1 & a ruling of cone $E$                      \\
         & point, $E$ is a quadric cone                     &   &                                           \\
         & in $\P^3$, and $\O_E(E) \simeq \O_E \otimes$     &   &                                           \\
         & $\O_{\P^3}(-1)$                                  &   &                                           \\ \hline
      E5 & $\phi(E)$ is a quadruple non                     & 1 & a line on $E \simeq \P^2$                 \\
         & Gorenstein point on $Y$, $E$                     &   &                                           \\
         & $\simeq \P^2$, and $\O_E(E) \simeq \O_{\P^2}(-2)$ &   &                                          \\ \hline
    \end{tabular}
    $$
\item $R$ is numerically effective.  Then $\phi: X \rightarrow Y$ is a relative Fano model, $Y$ is nonsingular, dim $Y \le 2$, and all the possible situations are the following:
    \begin{enumerate}
    \item dim $Y = 2$: Then $\phi: X \rightarrow Y$ has a conic bundle structure, of which there are 2 types.
    \item dim $Y = 1$: Then $\phi: X \rightarrow Y$ has a del Pezzo structure, of which there are 3 types.
    \item dim $Y = 0$: Then $X$ is Fano.
    \end{enumerate}
\end{enumerate}
See \cite{IP99} for more details regarding the non divisorial cases.
\end{theorem}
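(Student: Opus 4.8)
The plan is to build the theorem in three layers. First I would produce enough rational curves in the $K_X$-negative part of the cone of curves to determine its structure; then I would invoke the resulting Cone and Contraction theorems to obtain a morphism $\phi\colon X \to Y$ contracting precisely the curves in a chosen extremal ray $R$; and finally I would analyze the local geometry of $\phi$ to read off the table. Throughout, the numerical behavior of an extremal ray is governed by its length $\mu(R) = \min\{-K_X \cdot C\}$, so an early goal is to bound $\mu(R)$ and to control the fibers of $\phi$ in terms of it.

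The technical heart, and what I expect to be the main obstacle, is the production of rational curves via bend-and-break together with reduction to positive characteristic---this is Mori's decisive innovation. Starting from any curve $C$ with $-K_X \cdot C > 0$, deformation theory for maps $f\colon \P^1 \to X$ (or from $C$) shows that the space of deformations has dimension at least $-K_X \cdot C + (\dim X)(1-g)$ by a Riemann--Roch and $H^1$ estimate. When the genus $g$ is positive this bound is useless in characteristic $0$, so one works over a finite field, precomposes $f$ with the $q$-power Frobenius to multiply the degree by $q$ while keeping the genus fixed, and thereby forces a genuinely positive-dimensional family of maps through a fixed point. Fixing a second point and letting the family degenerate, the source must break off a rational component meeting both points (the \emph{bend and break}); carrying this out carefully and then lifting the resulting rational curves back from characteristic $p$ to characteristic $0$ yields rational curves in $X$ lying in the negative part of the cone.

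With rational curves available, anchoring the bend-and-break at a single point shows that any rational curve $C$ with $-K_X \cdot C \ge \dim X + 2$ can be broken while keeping that point fixed; hence every $K_X$-negative extremal ray is generated by a rational curve $l_R$ of length $\mu(R) \le \dim X + 1 = 4$, and the $K_X$-negative part of $\overline{NE}(X)$ is locally rational polyhedral. I would then invoke the existence of the contraction morphism $\phi\colon X \to Y$ associated to $R$ and split into two cases according to whether $R$ is numerically effective. If the locus swept out by curves in $R$ is all of $X$, then $\phi$ is of fiber type with $\dim Y < 3$, giving the conic-bundle, del Pezzo fibration, and Fano alternatives of case (2); otherwise, using that a smooth threefold admits no small extremal contractions, the exceptional locus is a prime divisor $E$ and $\phi$ is birational.

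The remaining work is the local classification of the divisorial case, which produces the five rows E1--E5. The governing constraints are the Ionescu--Wi\'sniewski fiber-dimension inequality $\dim F + \dim \phi(F) \ge \dim X + \mu(R) - 1$, the adjunction relation $K_E = (K_X + E)|_E$, and the computation of the normal bundle $\O_E(E)$ from $-K_X \cdot l_R = \mu(R)$. If $\phi(E)$ is a curve the fibers are one-dimensional, the general fiber is $\P^1$, and one identifies $\phi$ as the blow-up of a smooth curve in a smooth $Y$ (E1). If $\phi(E)$ is a point then $E$ is itself the fiber, and the possible pairs $(E, \O_E(E))$ are pinned to $(\P^2, \O(-1))$, $(\P^1\times\P^1, \O(-1,-1))$, a quadric cone, or $(\P^2, \O(-2))$, which, together with the induced singularity on $Y$, give exactly cases E2--E5. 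I expect this final step---separating E3, E4, and E5 by the precise analytic type of the singularity on $Y$ and verifying the smoothness or Gorenstein/index data---to be the most delicate bookkeeping, but it is bounded, low-dimensional, and driven entirely by adjunction and the normal-bundle computation once the rational-curve machinery of the earlier steps is in place.
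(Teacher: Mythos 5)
The paper does not prove this statement at all: it is reproduced verbatim as Mori's 1982 classification of extremal rays on smooth projective threefolds, with the reader referred to \cite{IP99} for details, and it is used throughout the paper purely as a black box. So there is no proof of the paper's to compare yours against; what can be judged is whether your outline is a viable reconstruction of the cited result, and in broad strokes it is: characteristic-$p$ bend-and-break to produce rational curves, the cone theorem with the length bound $\mu(R) \le \dim X + 1$, existence of the contraction $\phi$, and then adjunction/normal-bundle analysis of the divisorial case is exactly Mori's architecture.

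Two caveats on the substance. First, ``invoking'' the contraction morphism is only non-circular in a modern treatment (via Kawamata basepoint-freeness, which postdates 1982); in Mori's original argument the existence of $\phi$ for a smooth threefold was itself one of the main theorems, established by a hands-on analysis, and the Ionescu--Wi\'sniewski inequality you cite is likewise anachronistic for the result as attributed (though legitimate in a modern proof). Second, your sketch under-weights what is actually the hardest part of the divisorial classification: proving that the exceptional locus of a birational extremal contraction on a smooth threefold is an irreducible divisor (no small contractions exist here), and, in the E1 row, that $\phi(E)$ and $Y$ are smooth and that $\phi$ is precisely the blow-up of $\phi(E)$ with its reduced structure. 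These require genuine deformation-theoretic work on the fibers and the normal bundle, and they are where Mori's proof spends its effort --- rather than the separation of E3/E4/E5 by singularity type, which you predict to be the delicate step. As a plan your proposal is sound; as a proof it defers precisely the steps that make the theorem a theorem.
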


\begin{lemma}
\label{FirstLem}Let $D$ be any divisor which is not $\psi$-nef, i.e., $-D$ is $\psi$-ample.  Then the $D$-flop of $\psi$ exists, i.e., a small birational map $\psi^+: X^+ \rightarrow X'$ such that the strict transform $\widetilde{D} \subset X^{+}$ is $\psi^{+}$-ample.  Moreover, $X^{+}$ is smooth with $-K_{X^{+}}$ big and nef and
$$
\begin{array}{c}
\rho(X^{+}) = 2; \\
(-K_X)^3 = (-K_{X^{+}})^3; \\
h^0(\O_X(D)) = h^0(\O_{X^{+}}(\widetilde{D})).
\end{array}
$$
\end{lemma}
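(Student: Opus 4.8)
The plan is to recognize $\psi$ as a flopping contraction, invoke the existence theorem for threefold flops of \cite{Ko89}, and then extract each numerical statement from the single fact that a flop is an isomorphism in codimension one.

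First I would check the hypotheses of the flop existence theorem. By assumptions (iii) and (v), $\psi \colon X \to X'$ is small and birational, and it is crepant: writing $-K_X = \psi^*(-K_{X'})$, the class $K_X$ is numerically $\psi$-trivial, so $\psi$ is a flopping contraction. Since $-D$ is $\psi$-ample, the $D$-flop is produced as
\[
X^+ = \operatorname{Proj}_{X'} \bigoplus_{m \ge 0} \psi_* \O_X(mD),
\]
and the substance of \cite{Ko89} is that this sheaf of $\O_{X'}$-algebras is finitely generated, so that $\psi^+ \colon X^+ \to X'$ is projective, small, and birational, with $\widetilde{D}$ being $\psi^+$-ample and $K_{X^+}$ again $\psi^+$-trivial by construction. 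Because the relative Picard number $\rho(X/X')$ equals one, the non-$\psi$-nef divisors fill a single half-line in $N^1(X/X')$, so this flop does not depend on the particular choice of $D$; it is the flop $\chi \colon X \dashrightarrow X^+$ appearing in diagram (\ref{Firstfig}).

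Next I would record the geometry of $X^+$. Crepancy of $\psi^+$ gives $-K_{X^+} = (\psi^+)^*(-K_{X'})$, and since $-K_{X'}$ is ample on the Fano threefold $X'$ while $\psi^+$ is proper and birational, $-K_{X^+}$ is nef and big. For smoothness I would appeal again to \cite{Ko89}: the points of $\psi(\operatorname{Exc}(\psi)) \subset X'$ are compound Du Val, $X$ is a small resolution of them, each irreducible flopping curve is a smooth rational curve with normal bundle among $\O(-1)\oplus\O(-1)$, $\O\oplus\O(-2)$, and $\O(1)\oplus\O(-3)$, and the flop of each such configuration is again a small resolution, hence smooth. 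Finally, $\chi$ is an isomorphism in codimension one, so strict transform induces an isomorphism $N^1(X) \cong N^1(X^+)$ and $\rho(X^+) = \rho(X) = 2$; thus $X^+$ again satisfies (i)--(vi).

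The two remaining equalities are then formal. Applying the projection formula to the birational morphisms $\psi$ and $\psi^+$ gives $(-K_X)^3 = (-K_{X'})^3 = (-K_{X^+})^3$. For the sections, let $U \subseteq X$ be the open locus on which $\chi$ is an isomorphism onto its image $U^+ \subseteq X^+$; both complements have codimension at least two, and $\chi$ identifies $\O_X(D)|_U$ with $\O_{X^+}(\widetilde{D})|_{U^+}$. Since $X$ and $X^+$ are smooth, Hartogs extension across these complements yields
\[
h^0(\O_X(D)) = h^0(\O_X(D)|_U) = h^0(\O_{X^+}(\widetilde{D})|_{U^+}) = h^0(\O_{X^+}(\widetilde{D})).
\]
The one genuinely deep input, which I would quote wholesale, is \cite{Ko89}: the finite generation of $\bigoplus_{m} \psi_* \O_X(mD)$ (equivalently, existence of the flop) together with the preservation of smoothness under a threefold flop. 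The hard part is concentrated entirely there; once the flop and its smoothness are granted, the invariants $\rho$, $(-K)^3$, and $h^0$ follow only from the flop being an isomorphism in codimension one.
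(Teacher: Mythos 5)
Your proposal is correct and takes essentially the same approach as the paper: the paper's ``proof'' is just a citation to Proposition 2.2 of \cite{JPR07}, whose content is precisely your argument --- Koll\'ar's existence and smoothness theorems for threefold flops from \cite{Ko89}, followed by the codimension-one-isomorphism bookkeeping that preserves $\rho$, $(-K)^3$, and $h^0$. You have simply reconstructed the details hidden behind that citation.
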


\begin{proof} See Proposition 2.2 in \cite{JPR07}
\end{proof}

We can further assume that $-K_X$ is generated by global sections (assumption (v)).  The case when $-K_X$ is not generated by global sections is the case when $X'$ is a deformation of the Fano threefold $V_2$, a double cover of $\P^3$ ramified along a smooth quartic.  This is proved in Proposition 2.5 in \cite{JPR07}.

If $-K_X$ is divisible in Pic$(X)$, then $-K_X = r_XH$ for some $H \in \Pic(X)$, where $r_X$ is called the \textit{index} of $X$.  The divisor $H$ is called the \textit{fundamental divisor} and the linear system $|H|$ is the \textit{fundamental system} on $X$.  The self-intersection number $H^3$ is the \textit{degree} of $X$.  Since we are assuming $X$ to be smooth (and in particular Gorenstein), we remark that the index $r$ is a positive integer.  By using the smoothing of $X'$, \cite{Shi89} has shown that $r_X \le 4$, with equality when $X' = \P^3$.  In addition, \cite{Shi89} showed that when $r_X = 3$, $X' \subset \P^4$ is the quadric.  In both cases, $X \cong X^{+}$.  For $r_X = 3$, both $\phi$ and $\phi^{+}$ are $\P^2$-bundles over $\P^1$.  See \cite{JPR07} proposition 2.12 for more details.

The case $r_X = 2$ was treated in \cite{JP06}.  Both $\phi$ and $\phi^{+}$ are either $E2$-contractions, $\P^1$-bundles or quadric bundles. The complete list for the case when $\rho(X) = 2$ and $\psi$ small is given in \cite{JPR07} Theorem 2.13.  Thus we can assume that $r_X = 1$, which is assumption (vi) above.

Lastly, we note the result of Remark 4.1.10 in \cite{IP99} concerning the case when $X$ is hyperelliptic (that is, the anticanonical map $\varphi_{|-K_X|}$ is generically a double cover). If the anticanonical morphism is a double cover of a $\Q$-factorial threefold, then the flop $\chi$ is the birational automorphism of $X$ induced by $\varphi_{|-K_X|}$ (see e.g. \cite{JPR07} Lemma 2.8). In particular, $X$ is isomorphic to $X^+$ and $\phi$ and $\phi^+$ have the same type. This holds, for example, when $-K_X^3 = 2$, since $\varphi_{|-K_X|}$ is a double cover of $\P^3$.

\section{E1-E1 case}
\subsection{Equations and Bounds}
Let us consider first the case that both extremal contractions $\phi$ and $\phi^+$ are of type $E1$. Then $Y$ is a smooth Fano variety with Picard number 1, and $\phi$ is the blowup of a curve $C \subset Y$. Let $g$ and $d$ denote the genus and degree of $C$ respectively and let $E$ denote the exceptional divisor $\phi^{-1}(C)$. Denote by $H$ a fundamental divisor in $Y$. The pullback of $H$ to $X$ will also be denoted by $H$.  Since $\Pic(Y)$ is generated by $H$, $\Pic(X) \cong \Z \oplus \Z$ is generated by $H$ and $E$. We will instead use the divisors $-K_X$ and $E$, which do not generate Pic(X) unless the Fano index of Y is one. Unless the Fano index of $Y$ is one, these divisors do not generate $\Pic(X)$. However, they do generate $\Pic(X) \otimes \Q$, the Picard group of $X$ with coefficients in $\Q$.

The strict transform of a divisor $D \in \Pic(X)$ across the flop $\chi$ is denoted $\widetilde{D}$. Since $\chi$ is small, $\widetilde{K_X} = K_{X^+}$.  We identify divisors in $X$ and $X^+$ via $\chi$ and thus have an isomorphism between the Picard groups of $X$ and $X^+$. In particular, for any $D \in \Pic(X), \chi^{-1}(\chi(D)) = D$ and for any $D^+ \in \Pic(X^+), \chi(\chi^{-1}(D^+)) = D^+.$ In our notation, we can write this as $\widetilde{\widetilde{D}}=D$.

Note also that since $-K_X = \phi^*(-K_Y)-E = rH-E$, the denominators of the rational coefficients of the expression of a divisor in terms of $-K_X$ and $E$ will divide $r$.

Similarly we define $C^{+}$, $g^{+}$, $d^{+}$, $E^{+}, r^+$ and $H^+$.  We write

\begin{equation}
\label{Eplusequation}
\widetilde{E^{+}} = \alpha (-K_X) + \beta E
\end{equation}
for some nonzero rational numbers $\alpha$ and $\beta$ such that $r\alpha,r\beta \in \Z$.

Similarly, we let
\begin{equation}
\label{Eequation}
\widetilde{E} = \alpha^{+} (-K_{X^{+}}) + \beta^{+} E^{+}
\end{equation}
for some nonzero rational numbers $\alpha^{+}$ and $\beta^{+}$ such that $r^+\alpha^+,r^+\beta^+ \in \Z$.

By equating $E$ and $\widetilde{\widetilde{E}}$, as well as $E^+$ and $\widetilde{\widetilde{E^+}}$, and then comparing coefficients, we obtain the following relations:
\begin{equation}
\label{firstchecks}
\beta\beta^+ = 1,\,\,\, \alpha + \beta\alpha^+ = \alpha^+ + \beta^+\alpha = 0.
\end{equation}

The goal for the numerical classification of these cases is to find solutions to the Diophantine equations that result from comparing intersection numbers on both sides of the flop. Intersection with $-K_X$ is preserved, so we can create these equations by using the following well known formulas (see eg \cite{IP99}).  Here $r$ and $r^{+}$ are the Fano indices of $Y$ and $Y^{+}$ respectively. (Recall that the  weak Fano index of both $X$ and $X^{+}$ is assumed to be 1.)
\begin{equation}
\label{wellknowns}
\begin{array}{c}
E^3 = -rd + 2 - 2g; \\
K^2_X.E = rd + 2 -2g; \\
K_X.E^2 = 2 - 2g; \\
-K_Y^3 = -K_X^3 + 2rd+2-2g.
\end{array}
\end{equation}

Define
$$
\sigma := rd+2-2g = K_X^2.E.
$$

All these formulas and variables can be written inside of $X^{+}$ with the ``+" sign written accordingly. For example, $$\sigma^{+} := r^{+}d^{+}+2-2g^{+} = K_{X^+}^2.E^+.$$

Since the flop $\chi$ is a small transformation, it induces an isomorphism $\Pic(X) \cong \Pic(X^+)$. In the following lemma, we enumerate cases where this isomorphism preserves intersection numbers.

\begin{lemma} \label{intlemma}
For any divisors $D$ and $D'$ on $X$:
\begin{enumerate}
\item $K_X.D.D' = K_{X^+}.\widetilde{D}.\widetilde{D'}$
\item Let $C$ be a curve in $X$ such that $C$ is disjoint from every flopping curve in $X$. Then $D.C = \widetilde{D}.\widetilde{C}$ and $\widetilde{C}$, the strict transform of $C$ on $X^+$ under $\chi$, is disjoint from any flopping curves in $X^+$.
\end{enumerate}
\end{lemma}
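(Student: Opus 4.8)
The plan is to treat both parts via the two defining features of the flop $\chi$: it is an isomorphism in codimension one (smallness) and it is crepant. Part (1) is the substantive statement, and part (2) reduces to a local computation. For part (1) I would pass to a common resolution: choose a smooth threefold $W$ with proper birational morphisms $f\colon W \to X$ and $g\colon W \to X^+$ resolving $\chi$ (for instance a resolution of $W \to X'$ dominating both $\psi$ and $\psi^+$). Since $\chi$ is small, strict transform gives a bijection between the prime divisors of $X$ and those of $X^+$; consequently a prime divisor of $W$ is $f$-exceptional if and only if it is $g$-exceptional, and I denote the subgroup they span by $\mathcal{E}$. Because $\chi$ is a flop it is crepant: writing $K_X = \psi^{*}K_{X'}$ and $K_{X^+} = (\psi^{+})^{*}K_{X'}$ (valid since $\psi, \psi^+$ are small and $X'$ is Gorenstein), I obtain the key identity $f^{*}K_X = g^{*}K_{X^+}$.

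For any divisors $D, D'$ on $X$, the strict transforms $\widetilde{D}, \widetilde{D'}$ agree with $D, D'$ off the flopping locus, so $f^{*}D = g^{*}\widetilde{D} + \Delta$ and $f^{*}D' = g^{*}\widetilde{D'} + \Delta'$ with $\Delta, \Delta'$ supported on $\mathcal{E}$. Using birational invariance of intersection numbers I rewrite $K_X.D.D' = f^{*}K_X.f^{*}D.f^{*}D'$, substitute $f^{*}K_X = g^{*}K_{X^+}$, and expand into four terms. The leading term is $g^{*}K_{X^+}.g^{*}\widetilde{D}.g^{*}\widetilde{D'} = K_{X^+}.\widetilde{D}.\widetilde{D'}$, which is the desired quantity. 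The two mixed terms $g^{*}K_{X^+}.g^{*}\widetilde{D}.\Delta'$ and $g^{*}K_{X^+}.\Delta.g^{*}\widetilde{D'}$ vanish by the projection formula for $g$, since $g_{*}\Delta = g_{*}\Delta' = 0$ because $\Delta, \Delta'$ are $g$-exceptional. The remaining term equals $K_{X^+}.g_{*}(\Delta.\Delta')$, where $g_{*}(\Delta.\Delta')$ is a $1$-cycle supported on the flopping curves of $X^+$; as these are $K_{X^+}$-trivial, this term is zero as well. This gives $K_X.D.D' = K_{X^+}.\widetilde{D}.\widetilde{D'}$.

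For part (2) I would argue directly from the fact that $\chi$ restricts to an isomorphism $X \setminus \mathrm{Exc}(\psi) \xrightarrow{\sim} X^+ \setminus \mathrm{Exc}(\psi^+)$. If $C$ is disjoint from every flopping curve then $C \subset X \setminus \mathrm{Exc}(\psi)$, so $\widetilde{C} = \chi(C)$ lies in $X^+ \setminus \mathrm{Exc}(\psi^+)$ and is therefore disjoint from every flopping curve of $X^+$, which is the second assertion. For the intersection number, on a neighborhood of $C$ the isomorphism $\chi$ carries $C$ to $\widetilde{C}$ and the line bundle $\O_X(D)$ to $\O_{X^+}(\widetilde{D})$, since $\widetilde{D}$ is the strict transform and hence coincides with $D$ off the flopping locus. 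Thus $D.C = \deg \O_X(D)|_C = \deg \O_{X^+}(\widetilde{D})|_{\widetilde{C}} = \widetilde{D}.\widetilde{C}$.

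The main obstacle is the error-term analysis in part (1). The first two cross terms die formally from smallness, but the term $g^{*}K_{X^+}.\Delta.\Delta'$ survives this reasoning and vanishes only because its pushforward is supported on the $K_{X^+}$-trivial flopping curves. This is precisely the point where the \emph{flop} structure, not merely smallness of $\chi$, is essential, so I would isolate the coincidence $f^{*}K_X = g^{*}K_{X^+}$ together with the triviality of $K_{X^+}$ on flopping curves as the heart of the argument; the verification that $f$-exceptional and $g$-exceptional divisors coincide is the other input that must be stated carefully.
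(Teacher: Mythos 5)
Your proof is correct, but for part (1) it takes a genuinely different route from the paper's. The paper never passes to a common resolution: it invokes the standing assumption that $|-K_X|$ is basepoint free, together with the fact that anticanonical divisors on $X$ are pullbacks under $\psi$ of anticanonical divisors on $X'$ (on which the flopping curves have become finitely many points), to choose a general member $S \in |-K_X|$ disjoint from the flopping curves; since $\chi$ is an isomorphism near $S$, carrying $S$ to $\widetilde{S} \in |-K_{X^+}|$ and $\O_X(D)|_S$ to $\O_{X^+}(\widetilde{D})|_{\widetilde{S}}$, both parts of the lemma drop out of the projection formula at once. Your argument instead works on a common resolution $W$, rests on the crepancy identity $f^{*}K_X = g^{*}K_{X^+}$, and kills the error terms using exceptionality and $K_{X^+}$-triviality of the flopping curves. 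What you gain is generality and transparency: you make no use of basepoint-freeness of $|-K_X|$, so your argument applies to an arbitrary flop of this kind, and it isolates exactly the two structural inputs (crepancy, $K$-triviality of flopped curves) responsible for the statement. What the paper gains is brevity, and a single observation that handles (1) and (2) simultaneously --- your part (2) is essentially the paper's entire proof. One point to tighten in your write-up: to conclude that $g_{*}(\Delta.\Delta')$ is supported on the flopping curves of $X^+$, knowing only that $\Delta, \Delta'$ are supported on $\mathcal{E}$ is not quite enough for an arbitrary common resolution, since exceptional divisors of $W$ can a priori lie over curves of $X^+$ that are not flopping curves. The clean fix is to observe that $f^{*}D$ and $g^{*}\widetilde{D}$ agree over the open locus where $\chi$ is an isomorphism, so $\Delta$ is in fact supported on $g^{-1}(\mathrm{Exc}(\psi^+))$, which is exactly what the last step needs; alternatively, take $W$ to be a resolution of the graph of $\chi$ that is an isomorphism away from the flopping locus.
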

\begin{proof}
Since $-K_X$ is assumed to be basepoint free, a general anticanonical divisor is disjoint from any of the finitely many flopping curves on $X$. In fact, the anticanonical divisors on $X$ are pullbacks of anticanonical divisors on $X'$, where the flopping curves are contracted to finitely many points.  The theorem then follows from the projection formula.
\end{proof}

Since intersection with $-K_X$ is preserved under the flop by the previous lemma, we have that
\begin{equation}
\label{intwithKx}
   \begin{array}{c}
            -K_{X^+}.(E^+)^2 = -K_X.\widetilde{E^+}^2 \\
            -K_X.(E^2) = -K_{X^+}.\widetilde{E}^2
      \end{array}
\end{equation}

These relations give us our Diophantine equations whose solutions give all possible numerical examples of the smooth weak Fano threefolds we are interested in. From (\ref{Eplusequation}) and (\ref{Eequation}), we can rewrite this as follows:
\begin{equation}
\label{C1forE1}
\begin{array}{cl}
    2g^+-2  & = -K_{X}(\alpha(-K_X)+\beta E)^2; \\
            & = \alpha^2(-K_X)^3+2\alpha\beta\sigma + \beta^2(2g-2).
\end{array}
\end{equation}
\begin{equation}
\label{C2forE1}
\begin{array}{cl}
    2g-2    & = -K_{X^+}(\alpha^+(-K_{X^+})+\beta^+ E^+)^2; \\
            & = (\alpha^+)^2(-K_X)^3+2\alpha^+\beta^+\sigma^+ + (\beta^+)^2(2g^+-2).
\end{array}
\end{equation}

In order to run a computer program to find solutions for these equations, we have to prove effective bounds for all variables involved. We continue to follow the ideas in \cite{JPR07}. Since $-K_X|_E$ is still nef and big, we have that $\sigma = K_X^2.E=(-K_X|_E)^2 > 0$. Similarly, $\sigma^{+} > 0$.
Since $X'$ has only terminal singularities, $X'$ is smoothable and the smoothing has the same Fano index as $X$, which is 1 by assumption (see e.g. \cite{JPR07} Prop. 2.4).  Since $|-K_X|$ is basepoint free, we have $$2 \le (-K_X)^3 \le 22 \,\,\,\textit{(evens only)}.$$
Since both $Y$ and $Y^{+}$ are smooth Fano threefolds of index $r$ and $r^{+}$ respectively, we have $1 \le r, r^{+} \le 4$.  By classification, we also have
\begin{equation} \label{KYbounds} 2 \le (-K_Y)^3 \le \left\{ \begin{array}{cc}22,& r = 1 \\40,& r = 2 \end{array}\right. \,\,\,\textit{(evens only) } \, \text{ and  } (-K_Y)^3 = \left\{ \begin{array}{cc} 54, & r = 3 \\ 64, & r = 4. \end{array} \right.
\end{equation}

The same bounds of course hold for $(-K_{Y^{+}})^3$. Next we bound both $d$ and $g$ (and apply the same argument to bound $d^{+}$ and $g^{+}$.)  From $22 \ge (-K_X)^3 = (-K_Y)^3 - \sigma - rd \ge 2$, we get
$$d \le \frac{(-K_Y)^3 - 3}{r} \le 19 \text{  and  } \sigma \le (-K_Y)^3 - rd - 2 \le \left\{ \begin{array}{cc}19, & r = 1 \\ 36, & r = 2 \\ 49, & r = 3 \\ 58, & r = 4.  \end{array} \right.$$

Finally, since $0< \sigma = rd - 2g + 2$ and by looking at each value of $r = 1, \ldots,4$ and the corresponding upper bound for $d$, we have $$g \le \frac{19r}{2} + 1.$$

Running a computer program alone at this point is impractical since we still have to loop through all rational values of $\alpha$ and $\beta$ (which are unbounded a priori at this point).  Thus the following lemma is necessary:
\begin{lemma} \label{betathm} Using the notation as above, if $X^+ \stackrel{\phi^+}{\longrightarrow} Y^+$ is an $E1$ contraction, then $\beta^{+} = \frac{-r}{r^{+}}$ and $\beta = \frac{-r^{+}}{r}$.
\end{lemma}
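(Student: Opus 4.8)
The plan is to pin down $\beta$ (then $\beta^+=1/\beta$ by \eqref{firstchecks}) by computing the intersection of $\widetilde{E^+}$ with the flopping locus in two ways. Write $\ell=\sum_i\gamma_i$ for the finite flopping $1$-cycle on $X$ and $\ell^+=\sum_i\gamma_i^+$ for its counterpart on $X^+$. Since $-K_X\cdot\gamma_i=0$ and $-K_X=rH-E$, each flopping curve satisfies $E\cdot\gamma_i=r\,(H\cdot\gamma_i)$, and symmetrically $E^+\cdot\gamma_i^+=r^+(H^+\cdot\gamma_i^+)$ on $X^+$. The first point I would record is that, because both $X$ and $X^+$ are smooth, the terminal Gorenstein points of $X'$ resolved by $\psi$ and $\psi^+$ are ordinary double points (these are the only terminal Gorenstein singularities admitting two distinct smooth small resolutions), so $\chi$ is a composite of disjoint Atiyah flops of $(-1,-1)$-curves. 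For each such flop one has the exact sign reversal $\widetilde D\cdot\gamma_i^+=-D\cdot\gamma_i$ for every divisor $D$.

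\textbf{Reduction.} Applying the sign reversal with $D=E^+$, summing over $i$, and using \eqref{Eplusequation} together with $-K_X\cdot\ell=0$ gives
\[
\beta\, r\,(H\cdot\ell)\;=\;\widetilde{E^+}\cdot\ell\;=\;-\,E^+\cdot\ell^+\;=\;-\,r^+(H^+\cdot\ell^+),
\]
so $\beta=-\dfrac{r^+}{r}\cdot\dfrac{H^+\cdot\ell^+}{H\cdot\ell}$. Thus the assertion $\beta=-r^+/r$ is \emph{equivalent} to the single identity $H\cdot\ell=H^+\cdot\ell^+$: the $H$-degree of the flopping locus is unchanged by the flop.

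\textbf{A second bookkeeping.} I would confirm the shape of the answer by a divisor computation. Because $\phi^+$ blows up the smooth curve $C^+$, every divisor on $X^+$ is the pullback of its pushforward plus a multiple of $E^+$; in particular $\widetilde E=mH^++cE^+$ with $m,c\in\Z$ and $m=r^+\alpha^+\ge 1$ (here $m\neq 0$ since $\alpha^+\neq 0$), and symmetrically $\widetilde{E^+}=m'H+c'E$ with $m'=r\alpha\ge 1$. Substituting $H^+=\tfrac1{r^+}(-K_{X^+}+E^+)$ and eliminating $c,c'$ through \eqref{firstchecks} yields $\beta=-\tfrac{r^+m'}{rm}$, so the claim is exactly $m=m'$. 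Comparing with the previous paragraph gives the compatibility $m'/m=(H^+\cdot\ell^+)/(H\cdot\ell)$, and $m=m'\Leftrightarrow H\cdot\ell=H^+\cdot\ell^+$, as expected.

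\textbf{The main obstacle.} The hard part is the geometric identity $H\cdot\ell=H^+\cdot\ell^+$ (equivalently $m=m'$). It is genuinely geometric and not forced by the numerical relations alone: integrality of $r\alpha,r\beta,r^+\alpha^+,r^+\beta^+$ together with \eqref{firstchecks} admits spurious solutions such as $(m,m')=(2,1)$ for $(r,r^+)=(1,2)$, which would give $\beta^+=-1$ and must be excluded. I would establish the identity from the local Atiyah models at the nodes of $X'$ together with the symmetry of the link, comparing the two blow-down polarizations $H$ and $H^+$ against the common Cartier class $-K_{X'}$ that both pull back from $X'$, and showing that the defect of $\psi_*H$ against $\psi^+_*H^+$ distributes equally, with opposite orientation, over the two rulings $\gamma_i,\gamma_i^+$ at each node, so that the per-node contributions telescope to $H\cdot\ell=H^+\cdot\ell^+$. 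The E1 hypothesis does the real work here: since $E$ and $E^+$ are ruled over the \emph{smooth} curves $C,C^+$, a general anticanonical K3 surface $S\in|-K_X|\simeq S^+$ meets each in an honest section, which is what controls the comparison. Carrying this out uniformly over all nodes is the step I expect to require the most care.
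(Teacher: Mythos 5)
There is a genuine gap, and it sits exactly where you yourself say the difficulty is. Your two reductions are fine as bookkeeping: granting that $\chi$ decomposes into disjoint Atiyah flops with the exact reversal $\widetilde{D}\cdot\gamma_i^+=-D\cdot\gamma_i$, Lemma \ref{betathm} is indeed equivalent to $H\cdot\ell=H^+\cdot\ell^+$, i.e.\ to $m=m'$. But that identity is the \emph{entire} content of the lemma --- as your own spurious solution $(m,m')=(2,1)$ shows, it is not forced by \eqref{firstchecks} and integrality --- and you never prove it; your final paragraph is a plan, not an argument. Worse, the structural premise behind the reduction is false: it is not true that an isolated terminal Gorenstein threefold point admitting two distinct smooth small resolutions must be an ordinary double point. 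Reid's pagoda points $\{xy=z^2-w^{2k}\}$, $k\ge 2$, are $cA_1$ but not ordinary double points, and both of their small resolutions are smooth; suitable $cA_n$ points with smooth small resolutions occur as well. This is precisely why the paper states ``if $\chi$ is an Atiyah flop'' as an explicit extra hypothesis in its lemma computing the defect $e$, rather than deducing it from smoothness of $X$ and $X^+$. So even if the exact reversal happens to survive for the flops that actually occur (it does for pagodas, but that requires proof), you cannot simply assert it.

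The paper's own proof shows that no geometry of the flopping locus is needed at all; the missing ingredient is a lattice index computation. Since $\chi$ is small, it induces an isomorphism $\Pic(X)\cong\Pic(X^+)$ carrying $-K_X$ to $-K_{X^+}$ and $E$ to $\widetilde{E}$. Because $\Pic(X)=\Z H\oplus\Z E$ and $-K_X=rH-E$, the subgroup $\langle -K_X,E\rangle=\langle rH,E\rangle$ has index $r$ in $\Pic(X)$. On the other side, in the basis $(H^+,E^+)$ one has $-K_{X^+}=r^+H^+-E^+$ and $\widetilde{E}=\alpha^+r^+H^+ +(\beta^+-\alpha^+)E^+$, so the index of $\langle -K_{X^+},\widetilde{E}\rangle$ in $\Pic(X^+)$ is
\[
\left|\det\begin{pmatrix} r^+ & -1\\ \alpha^+r^+ & \beta^+-\alpha^+\end{pmatrix}\right| \;=\; \bigl|r^+\beta^+\bigr|.
\]
Equating the two indices gives $r=r^+|\beta^+|$; and since $\beta^+<0$ (your sign observation is valid at the level of signs for \emph{any} flop: $E^+\cdot\gamma^+>0$ forces $\widetilde{E^+}\cdot\gamma<0$, hence $\beta<0$, hence $\beta^+<0$ by \eqref{firstchecks}), this yields $\beta^+=-r/r^+$ and then $\beta=-r^+/r$. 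In other words, the identity $m=m'$ that you isolate as ``the main obstacle'' is exactly the statement that these two sublattices have the same index, which is automatic from the Picard group isomorphism; no per-node analysis and no Atiyah hypothesis is required. If you want to salvage your approach, this index preservation is the fact to isolate and prove.
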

\begin{proof}
Since $\phi^+$ is of type $E1$, $-K_{X^{+}} = r^{+}H^{+} - E^{+}$.  Combining this with (\ref{Eequation}), we can rewrite $\widetilde{E}$ as follows: $$\begin{array}{cl} \widetilde{E} & = \alpha^{+}(r^{+}H^{+}-E^{+}) + \beta^{+}E^{+} \\ & = \alpha^{+}r^{+}H^{+} + (\beta^{+}-\alpha^{+})E^{+}. \end{array}$$

Thus $$\begin{array}{cl}  \Z / r\Z & \cong \Pic (X) / \langle -K_X, E \rangle \\
                                   & \cong \Pic (X^{+}) / \langle -K_{X^{+}}, \widetilde{E} \rangle \\
                                   & \cong \Pic (X^{+}) / \langle r^{+}H^{+} - E^{+}, \alpha^{+}r^{+}H^{+} + (\beta^{+} - \alpha^{+})E^{+} \rangle. \end{array}$$
Taking orders of both sides yields:
$$r = \left| \begin{array}{cc} \alpha^{+}r^{+} & r^{+}\\ \beta^{+}-\alpha^{+} & -1\end{array} \right|  = -\alpha^{+}r^{+} - r^{+}\beta^{+} + \alpha^{+}r^{+} = -r^+\beta^+$$ and thus $\beta^+ = \frac{-r}{r^{+}}$ as desired.  From (\ref{firstchecks}), it then follows that $\beta = \frac{-r^+}{r}$.
\end{proof}

Now that both $\alpha^+$, and then by (\ref{firstchecks}) $\alpha$, are completely determined by replacing (\ref{Eequation}) into the formula for $\sigma$:
$$ \sigma = K_X^2\widetilde{E} = \alpha^+(-K_{X^+})^3 + \beta^+K_{X^+}^2E^+.$$
Thus \begin{equation}
\label{alphapluseqn}\alpha^+ = \frac{\sigma - \beta^+\sigma^+}{(-K_X)^3}.
\end{equation}

We include the following two formulas for completeness:
\begin{equation}
\label{Ecubed}
\widetilde{E}^3 = (\alpha^+)^3(-K_X)^3 + 3(\alpha^+)^2\beta^+\sigma^+-3\alpha^+(\beta^+)^2K_X(E^+)^2 + (\beta^+)^3(E^+)^3.
\end{equation}
\begin{equation}
\label{Ecubed2}
\widetilde{E^+}^3 = \alpha^3(-K_X)^3 + 3 \alpha^2\beta\sigma - 3 \alpha\beta^2K_XE^2 + \beta^3E^3.
\end{equation}

All our variables are now bounded or completely determined via an explicit formula.  With our equations finalized, we are ready to run a computer program to find all the possible numerical values.  Programs were written in both Visual Basic and C++ to verify the results.  The Visual Basic source code can be found on the author's website. Here we summarize our bounds on the variables used in the program.

Variables and bounds:
$$\begin{array}{c}
1 \le r,r^+ \le 4; \\
2 \le K_X^3 \le 22 \,\,\text{(evens only)};\\
0 \le g \le \frac{19r}{2} + 1; \\
0 \le g^+ \le \frac{19r^+}{2} + 1; \\
1 \le d, d^+ \le 19.
\end{array}
$$

From these variables, the following are then determined:
$$\begin{array}{c}
-K_Y^3 = -K_X^3 + 2rd+2g-2; \\
-K_{Y^+}^3 = -K_X^3 + 2r^+d^++2g^+-2;\\
\sigma = rd+2-2g;\\
\sigma^+ = r^+d^++2-2g^+;\\
\beta^+ = -\frac{r}{r^+}; \\
\beta = \frac{1}{\beta^+};\\
\alpha^+ = \frac{\sigma - \beta^+\sigma^+}{-K_X^3};\\
\alpha = -\beta\alpha^+.\\
\end{array}
$$
Equations that any weak Fano threefold with our assumptions must satisfy (\ref{intwithKx}):
$$\begin{array}{cl} 2g^+-2      & = \alpha^2(-K_X)^3+2\alpha\beta\sigma + \beta^2(2g-2);\\
                    2g-2        & = (\alpha^+)^2(-K_X)^3+2\alpha^+\beta^+\sigma^+ + (\beta^+)^2(2g^+-2).
                    \end{array}
$$

\subsection{Numerical Checks}
Although the program can be run at this point, there are still numerical checks that can be added to the program to eliminate some possible solutions. Some simple numerical checks to include in the program have been previously mentioned, such as (\ref{firstchecks}) and $\sigma, \sigma^+ > 0$.  From the classification of smooth Fano threefolds $Y$ with $\rho(Y) = 1$, we also have to check that both $(-K_Y)^3, (-K_{Y^{+}})^3$ are even and that both these numbers satisfy the bounds from (\ref{KYbounds}).  In addition, since a smooth Fano threefold of Picard number 1 and degree 20 is known to not exist (see e.g., \cite{IP99}), we have to check that if $r = 1, (-K_Y)^3 \ne 20$ and similarly if $r^+ = 1, (-K_{Y^+})^3 \ne 20$.

Another obvious check to include in the program is that $r^3$ divides $(-K_Y)^3$ and that $(r^+)^3$ divides $(-K_{Y^+})^3$.  Using the formulas in (\ref{Ecubed}) and (\ref{Ecubed2}), we also need to make sure both $\widetilde{E}^3$ and $\widetilde{E^+}^3 \in \Z$.

Define the \textit{defect} of the flop to be:
\begin{equation}
\label{littlee}
\begin{array}{l}
e = E^3 - \widetilde{E}^3.
\end{array}
\end{equation}

Then we have the following two lemmas, which add two more checks to our growing list.
\begin{lemma}
\cite{Tak02} The correction term $e$ in (\ref{littlee}) is a strictly positive integer.
\end{lemma}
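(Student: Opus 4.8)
The plan is to treat the two assertions separately. \emph{Integrality} is immediate: both $E$ and its flopped transform $\widetilde{E}$ are honest Cartier divisors on the smooth threefolds $X$ and $X^+$, so $E^3,\widetilde{E}^3\in\Z$ and hence $e=E^3-\widetilde{E}^3\in\Z$. All the content is in the strict positivity, and for this I would pass to a common resolution of the flop. Choose a smooth $W$ with proper birational $p:W\to X$ and $q:W\to X^+$ resolving $\chi$, and set $a=p^*E$, $b=q^*\widetilde{E}$, $F=a-b$. Since $\chi$ is an isomorphism in codimension one and $\widetilde{\widetilde{E}}=E$, one has $p_*a=E=p_*b$ and $q_*a=\widetilde{E}=q_*b$; in particular $p_*F=q_*F=0$, so $F$ is supported on the divisors contracted by both $p$ and $q$ over the finitely many flopping points $P_i\in X'$. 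Because $p,q$ are birational, $a^3=E^3$ and $b^3=\widetilde{E}^3$, so $e=a^3-b^3$; and the projection formula with $p_*F=0$ gives $a^2\cdot F=E^2\cdot p_*F=0$, and likewise $b^2\cdot F=0$. Expanding $a^3-b^3=F(a^2+ab+b^2)$ and dropping the two vanishing terms yields
\[
e \;=\; a\cdot b\cdot F \;=\; -\,p^*E\cdot F^2 \;=\; -\,E\cdot p_*(F^2),
\]
and symmetrically $e=\widetilde{E}\cdot q_*(F^2)$. Thus everything reduces to the one-cycle $p_*(F^2)$, which lives on the flopping curves $\ell_i=\mathrm{Exc}(\psi)$.

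Next I would pin down the sign on the flopping curves, where the $E1$ hypothesis enters decisively. Writing $-K_X=rH-E$ with $H$ the nef pullback of the fundamental divisor of $Y$, every flopping curve satisfies $-K_X\cdot\ell_i=0$, hence $E\cdot\ell_i=r\,(H\cdot\ell_i)$. A flopping curve is never contracted by $\phi$, since the fibers of the ruled surface $E$ have $-K_X\cdot\ell_R=\mu(R)=1\neq 0$; therefore $\phi_*\ell_i\neq 0$, so $H\cdot\ell_i>0$ and consequently $E\cdot\ell_i>0$ for every $i$. Equivalently, $E$ is $\psi$-ample and $\widetilde{E}$ is $\psi^+$-anti-ample, so $\widetilde{E}\cdot\ell_i^+<0$.

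It remains to control $p_*(F^2)$. The divisor $F$ is $p$-exceptional and $p$-nef, because its relative class over $X$ equals that of $-q^*\widetilde{E}$, which is $p$-nef precisely because $\widetilde{E}\cdot\ell_i^+<0$; by the negativity lemma $-F$ is then effective. The same argument on the $X^+$ side shows $F$ is also $q$-nef. Using the negative-definiteness of the intersection form of the contracted surfaces over each point $P_i$, the self-intersection cycle $p_*(F^2)$ (resp.\ $q_*(F^2)$) is \emph{anti}-effective, i.e.\ a non-positive combination $-\sum_i d_i\ell_i$ with $d_i\ge 0$. Feeding this into $e=-E\cdot p_*(F^2)$ and invoking $E\cdot\ell_i>0$ gives $e=\sum_i d_i\,(E\cdot\ell_i)\ge 0$, with strict inequality because $E\cdot\ell_i>0$ forces $F$ to be nonzero over every $\ell_i$, hence every relevant $d_i>0$. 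In the model situation of disjoint Atiyah (that is, $(-1,-1)$) flops this collapses to the transparent identity $e=\sum_i(E\cdot\ell_i)^3>0$.

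The hard part will be the anti-effectivity step for a \emph{general} flopping configuration: $\mathrm{Exc}(\psi)$ need not be a disjoint union of $(-1,-1)$-curves but can be a reducible, possibly non-reduced tree of rational curves lying over compound Du Val points, so that $F^2$ acquires positive cross-terms $Q_j\cdot Q_k$ competing with the negative self-terms $Q_j^2$. Verifying that $p_*(F^2)$ is nonetheless anti-effective—equivalently, extracting the correct positive local contribution at each $P_i$—is exactly the delicate analysis carried out through the flop theory of \cite{Ko89} and \cite{Tak02}. By contrast, the sign input $E\cdot\ell_i>0$ coming from the $E1$ structure is completely clean; the genuine work lies entirely in the negativity and combinatorics of the contracted surfaces.
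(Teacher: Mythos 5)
Most of your proposal is correct, and its skeleton is exactly that of the proof the paper actually points to (the paper itself gives no argument of its own: its ``proof'' is the citation of Lemma 3.1 of \cite{Kal09}, with the statement attributed to \cite{Tak02}). Integrality is indeed immediate from Cartier-ness on the two smooth models; the identities $a^2F=b^2F=0$ and $e=abF=-E\cdot p_*(F^2)=\widetilde{E}\cdot q_*(F^2)$ are right; the sign computation $E\cdot\ell_i=rH\cdot\ell_i>0$ (flopping curves are not $\phi$-fibers) and hence $\widetilde{E}\cdot\ell_i^+<0$ is right; and the negativity-lemma argument that $-F$ is effective is right.

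The gap is the step you yourself flag, and it is a genuine one, not a routine verification: the anti-effectivity of $p_*(F^2)$ is the entire content of the lemma, and the principle you invoke for it --- ``negative-definiteness of the intersection form of the contracted surfaces over each point $P_i$'' --- does not exist. Mumford--Grauert negative definiteness is a statement about curves contracted by a morphism of \emph{surfaces}; for prime divisors $D_j$ contracted to points of a threefold, $D_j^2$ is a curve class, not a number, and no definiteness statement holds for such configurations (the positive cross-terms you worry about are exactly why). What makes your claim true is a different, two-dimensional localization: at the generic point $\eta_i$ of each flopping curve $\ell_i\subset X$, $p$ becomes a proper birational morphism of regular $2$-dimensional schemes, the components of $F$ dominating $\ell_i$ become honest exceptional curves there, and the coefficient of $\ell_i$ in $p_*(F^2)$ equals $(F_{\eta_i})^2\le 0$ by genuine (surface) negative definiteness, with strict inequality iff some component of $F$ dominates $\ell_i$. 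That domination statement is also not free (it is your unproved ``$E\cdot\ell_i>0$ forces $F$ to be nonzero over every $\ell_i$''); it follows, for instance, by taking a curve $C'\subset p^{-1}(\ell_i)$ dominating $\ell_i$, noting $F\cdot C'=E\cdot p_*C'-\widetilde{E}\cdot q_*C'>0$, and using $F\le 0$ to force $C'\subset\mathrm{Supp}(F)$, so that a component of $\mathrm{Supp}(F)$ maps onto $\ell_i$. Since you carry out neither the localization nor the domination argument, but defer both to \cite{Ko89} and \cite{Tak02}, the proposal reduces the lemma to an unproven assertion rather than proving it. If you want to bypass $p_*(F^2)$ entirely, note that $e=p^*E\cdot(-q^*\widetilde{E})\cdot(-F)$, where $p^*E$ and $-q^*\widetilde{E}$ are nef over $X'$ and $-F$ is effective and exceptional over $X'$; restricting to the components of $-F$ exhibits $e$ as a sum of products of nef classes on projective surfaces, hence $e\ge 0$, and only the strict inequality then needs the flop-specific input (a Hodge-index argument ruling out that the two nef classes are orthogonal on every component) --- this is essentially how \cite{Kal09}, the proof the paper cites, proceeds.
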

\begin{proof}
For a simpler proof then that of \cite{Tak02}, see the proof of Lemma 3.1 in \cite{Kal09}.
\end{proof}

The integer $e$ is not widely understood and is related to the number of flopping curves.  In fact, $e$ is equal to this number if the flop is a simple Atiyah flop and if for each flopping curve $\Gamma$ we have $H.\Gamma=1$.  At present, there is no known upper bound.  In our tables, we still have open examples with very large values of $e$.  If a strict upper bound could be found, then this bound could be used to eliminate the geometric realization of some open cases.

\begin{lemma}
The integer $r^3$ divides $e$.
\end{lemma}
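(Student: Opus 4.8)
The plan is to express the defect $e = E^3 - \widetilde{E}^3$ directly in terms of the quantities appearing in equations (\ref{Ecubed2}) and (\ref{wellknowns}), and then track divisibility by $r$ through every term. Recall from the remark following (\ref{Firstfig}) that the denominators of the rational coefficients of any divisor written in terms of $-K_X$ and $E$ must divide $r$, since $-K_X = rH - E$. The strategy is to rewrite $\widetilde{E}^3$ not in the mixed $(-K_{X^+}, E^+)$ basis of (\ref{Ecubed}), but entirely in terms of the integral generators $H$ and $E$ of $\Pic(X)$, where all intersection numbers are manifestly integers and the role of $r$ is transparent.

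First I would use $E = \phi^*(-K_Y) - (-K_X) \cdot(\text{stuff})$ — more precisely, work in the basis $\{H, E\}$ of $\Pic(X)$, under which $-K_X = rH - E$. The key observation is that $\widetilde{E}$, being the strict transform of the exceptional divisor under the flop, can be written as $\widetilde{E} = \alpha^+(-K_{X^+}) + \beta^+ E^+$ by (\ref{Eequation}), and by Lemma \ref{betathm} we have $\beta^+ = -r/r^+$. Since $\phi^+$ is an $E1$ contraction, $-K_{X^+} = r^+ H^+ - E^+$, so substituting gives $\widetilde{E} = \alpha^+ r^+ H^+ + (\beta^+ - \alpha^+)E^+$; the coefficient of $H^+$ is $\alpha^+ r^+$ and the coefficient of $E^+$ is an integer combination with denominator dividing $r^+$. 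I would then compute both $E^3$ and $\widetilde{E}^3$ reduced modulo $r^3$ (equivalently, modulo $r$ after extracting the cube), using that $H$ is a pullback from $Y$ so that $H^3, H^2 \cdot E, H \cdot E^2$ are controlled, while $E$ satisfies $E^3 = -rd + 2 - 2g$ and $-K_X \cdot E^2 = E^2 \cdot (rH - E) = 2g - 2$ from (\ref{wellknowns}).

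The cleanest route is congruence modulo $r$. Since $-K_X = rH - E \equiv -E \pmod{r \Pic(X)}$, intersecting three classes and reducing shows $E^3 \equiv (-K_X)^3 \cdot(\pm 1) \pmod{r}$ in the appropriate sense; the same reduction applies on the $X^+$ side to $\widetilde{E}^3$, because $-K_{X^+} \equiv -E^+ \pmod{r^+}$ and the Picard isomorphism preserves $(-K_X)^3 = (-K_{X^+})^3$ by Lemma \ref{FirstLem}. The point is that $E^3$, $\widetilde{E}^3$, and $(-K_X)^3$ are all congruent to one another modulo an appropriate power of $r$ once one expands everything against $H$ and $E$ and uses that the off-diagonal intersections carry factors of $r$ coming from $H = \frac{1}{r}((-K_X) + E)$. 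Subtracting the two expansions, the terms not divisible by $r^3$ cancel, leaving $e \equiv 0 \pmod{r^3}$.

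The main obstacle will be bookkeeping the denominators carefully enough to get the full power $r^3$ rather than just $r$. A single intersection number written in the $\{-K_X, E\}$ basis only guarantees denominator $r$, but $e$ is a triple intersection, so naively one expects $r^3$ in denominators to potentially obstruct integrality; the content of the lemma is that the \emph{leading} obstruction is a perfect cube because all three slots degrade by the same factor $H = \tfrac{1}{r}(-K_X + E)$. I would make this precise by substituting $H = \tfrac{1}{r}((-K_X)+E)$ into the integral expression $\widetilde{E} = \alpha^+ r^+ H^+ + (\beta^+-\alpha^+)E^+$, clearing denominators on both $X$ and $X^+$ simultaneously, and verifying that the difference $E^3 - \widetilde{E}^3$, which is already an integer by the previous checks, acquires the factor $r^3$ once the cross terms — each of which individually carries at least one factor of $r$ — are collected. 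Matching the coefficient of $H$ via $\alpha^+ r^+$ and invoking $\beta^+ = -r/r^+$ is what converts the ambient $r^+$-denominators into a clean factor of $r$ on the nose.
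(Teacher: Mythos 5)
Your starting point is the right one---the identity $E = rH + K_X$ (equivalently $H = \tfrac1r(-K_X+E)$) together with a comparison of cube expansions across the flop---but the proposal stalls at exactly the step that constitutes the proof. Expanding $E^3 = (rH+K_X)^3$ produces terms of order $r^0$, $r^1$, $r^2$ and $r^3$, and your claim that ``the terms not divisible by $r^3$ cancel'' upon subtracting $\widetilde{E}^3$ is asserted, not proved; your heuristic that the cross terms ``each carry at least one factor of $r$'' can at best yield $r \mid e$, which you yourself flag as the main obstacle without resolving it. The missing ingredient is Lemma \ref{intlemma}: intersection numbers against the canonical class are preserved by the flop. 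Since the flop identifies the integral Picard lattices and $\widetilde{K_X}=K_{X^+}$, the strict transform of $E = rH+K_X$ is $\widetilde E = r\widetilde H + K_{X^+}$ with $\widetilde H$ an honest integral class, and then
\begin{align*}
e &= (rH+K_X)^3 - \bigl(r\widetilde H + K_{X^+}\bigr)^3\\
  &= r^3\bigl(H^3 - \widetilde H^3\bigr) + 3r^2\bigl(H^2.K_X - \widetilde H^2.K_{X^+}\bigr)\\
  &\qquad + 3r\bigl(H.K_X^2 - \widetilde H.K_{X^+}^2\bigr) + \bigl(K_X^3 - K_{X^+}^3\bigr).
\end{align*}
The last three differences vanish by Lemma \ref{intlemma} (take $D=D'=H$, then $D=H$, $D'=K_X$) and by $K_X^3=K_{X^+}^3$, leaving $e = r^3\bigl(H^3-\widetilde H^3\bigr)$ with $H^3-\widetilde H^3\in\Z$. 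The sub-$r^3$ terms are individually nonzero; it is only their differences across the flop that vanish, so this cancellation is structural and no amount of denominator bookkeeping in a single Picard group will substitute for it.

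A second, smaller defect: your detour through the basis $(H^+,E^+)$ on $X^+$ via (\ref{Eequation}) and Lemma \ref{betathm} is both unnecessary and counterproductive. It imports denominators dividing $r^+$ rather than $r$, so the congruence $-K_{X^+}\equiv -E^+ \pmod{r^+}$ you invoke addresses the wrong modulus for a statement about $r^3$; moreover Lemma \ref{betathm} requires $\phi^+$ to be of type $E1$, whereas the argument above needs no hypothesis on $\phi^+$ at all, since the correct object to compare with $E^3$ is the strict transform $\widetilde E$ expressed through $\widetilde H$ and $K_{X^+}$, not through the exceptional data of the second contraction.
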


\begin{proof}
Starting from the equality $E = rH + K_X$, take the strict transform of $E$ under $\chi$ to get $\widetilde{E} = (\widetilde{rH + K_X}) = r\widetilde{H} + K_{X^+}$.  Taking the difference of cubes in the formula for $e$ and using the facts that $-K_X^3 = -K_{X^+}^3$ and that intersection with $K_X$ is preserved under a flop (Lemma \ref{intlemma}) we get the desired result.
\end{proof}

\begin{lemma}If $\chi$ is an Atiyah flop, then $e = \displaystyle \sum_\Gamma (E.\Gamma)^3$, where the sum is taken over the finitely many flopping curves $\Gamma \subset X$.
\end{lemma}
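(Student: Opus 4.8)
The plan is to compute the defect $e = E^3 - \widetilde{E}^3$ by passing to the common resolution of the flop $\chi$ and carrying out an intersection calculation on the exceptional divisor. Since $\chi$ is an Atiyah flop, its flopping locus is a finite union of mutually disjoint smooth rational curves $\Gamma \subset X$, each with normal bundle $\O_\Gamma(-1) \oplus \O_\Gamma(-1)$, and near each such $\Gamma$ the map $\chi$ is analytically the standard conifold flop. Because the curves are disjoint, the whole calculation localizes: if each $\Gamma$ contributes exactly $(E.\Gamma)^3$ to $E^3 - \widetilde{E}^3$, then summing over the finitely many $\Gamma$ gives the claimed formula. I would therefore first treat a single flopping curve $\Gamma$ and set $m := E.\Gamma$.

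For a single $\Gamma$, let $p\colon \widehat{X} \to X$ be the blow-up along $\Gamma$, with exceptional divisor $F$. Since $N_{\Gamma/X} \cong \O_\Gamma(-1)^{\oplus 2}$, we have $F \cong \P^1 \times \P^1$ and $\O_F(F) \cong \O_{\P^1 \times \P^1}(-1,-1)$; contracting the second ruling produces $p^+\colon \widehat{X} \to X^+$, so that $\chi = p^+ \circ p^{-1}$. Write $f$ and $f^+$ for the two rulings of $F$, contracted by $p$ and $p^+$ respectively, and recall $F.f = F.f^+ = -1$, whence $F^3 = (\O_F(F))^2 = 2$. The divisors $p^*E$ and $(p^+)^*\widetilde{E}$ agree away from $F$, so their difference is a multiple of $F$; intersecting with $f^+$ and using $p^*E.f^+ = E.\Gamma = m$, $(p^+)^*\widetilde{E}.f^+ = 0$, and $F.f^+ = -1$, I obtain the key relation $(p^+)^*\widetilde{E} = p^*E + mF$. (Note this uses only $p^*E$, so it is insensitive to whether $\Gamma$ lies on $E$.)

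With this relation in hand the computation is routine. Expanding gives
\[
E^3 - \widetilde{E}^3 = (p^*E)^3 - (p^*E + mF)^3 = -3m\,(p^*E)^2F - 3m^2\,(p^*E)F^2 - m^3 F^3,
\]
and restricting to $F \cong \P^1 \times \P^1$ one checks that $p^*E|_F$ meets the rulings $f$ and $f^+$ in degrees $0$ and $m$, so that $(p^*E)^2F = (p^*E|_F)^2 = 0$ and $(p^*E)F^2 = (p^*E|_F).(F|_F) = -m$; together with $F^3 = 2$ this yields $E^3 - \widetilde{E}^3 = 3m^3 - 2m^3 = m^3 = (E.\Gamma)^3$. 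Summing the local contributions over all flopping curves then completes the proof. I expect the only delicate point to be the bookkeeping for several flopping curves at once: one must blow them all up simultaneously and verify that, because the $\Gamma$ (and hence the exceptional divisors $F$) are disjoint, every mixed term vanishes and the defects simply add. The hypothesis that $\chi$ be an Atiyah flop is exactly what guarantees both this disjointness and the clean normal bundle $\O(-1,-1)$, so the argument does not obviously extend to more general floppable configurations.
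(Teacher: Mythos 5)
Your proof is correct and follows essentially the same route as the paper's: resolve the flop by blowing up the flopping curve, obtain the relation $(p^+)^*\widetilde{E} = p^*E + mF$ on the resolution, expand the cube, and evaluate $(p^*E)^2F = 0$, $(p^*E)F^2 = -m$, $F^3 = 2$ on $F \cong \P^1\times\P^1$, localizing over the disjoint flopping curves. The only (minor, welcome) difference is that you derive the key relation by intersecting with the ruling $f^+$, which avoids the paper's unjustified identification of $p^*E$ with the strict transform of $E$ and so is insensitive to whether $\Gamma$ lies on $E$.
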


\begin{proof}
We will prove the lemma in the case of a simple Atiyah flop. The general case is similar.  The flop $\chi$ is resolved by blowing up the flopping curve $\Gamma \subset X$ and contracting the resulting exceptional divisor $D \cong \P^1 \times \P^1 \subset Z$ with normal bundle $\O(-1) \oplus \O(-1)$ in a different direction.
$$
\xymatrix{ & Z \ar[dr]^{g} \ar[dl]_{f}& \\ X  \ar@{-->}[rr]^{\chi} & & X^+}
$$
Let $E_Z$ denote the strict transform of $E$ on $Z$.  Let $a = E.\Gamma$.  Then $f^*(E) = E_Z$ and $g^*(\widetilde{E})=E_Z+aD.$  We then have:
$$\begin{array}{cl}
e   & = E^3-\widetilde{E}^3 \\
    & = f^*(E)^3 - g^*(\widetilde{E})^3\\
    & = E^3_Z - (E_Z+aD)^3\\
    & = -3aE_Z^2.D-3a^2E_Z.D^2 - a^3D^3.
\end{array}$$
An easy check verifies  that $E_Z.D = 0, E_Z.D^2 = -a$, and $D^3 = 2$.  Thus $e = a^3$ as desired.
\end{proof}

Since $\widetilde{E^+} = \alpha(-K_X)+\beta E$, by replacing $-K_Y$ with $rH$ and $-K_X$ with $-K_Y - E = rH - E$, we have that \begin{equation}
\label{EplusSquiggle}
\widetilde{E^+} = \alpha rH + (\beta - \alpha)E.
\end{equation}

Since $\widetilde{E^+} \in \Pic (X)$ is not divisible, we have the following:
\begin{proposition} Using all notation as above, we have the following numerical checks:
\label{gcdchecks}
  $$  \begin{array}{l}
        \text{GCD}(\alpha r, \beta - \alpha) = 1; \\
        \text{GCD}(\alpha^+ r^+, \beta^+ - \alpha^+)=1; \\
        \alpha r, \alpha^+ r^+, \alpha - \beta, \alpha^+ - \beta^+ \in \Z.
      \end{array}
  $$

\end{proposition}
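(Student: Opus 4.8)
The plan is to exploit the single structural fact recorded just before the statement: the strict transform $\widetilde{E^+}$ is a \emph{primitive} (non-divisible) class in $\Pic(X) \cong \Z H \oplus \Z E$, together with the parallel fact that $\widetilde{E}$ is primitive in $\Pic(X^+) \cong \Z H^+ \oplus \Z E^+$. The justification of primitivity is immediate: $E^+$ is the irreducible reduced exceptional divisor of the $E1$ contraction $\phi^+$, hence one of the two free generators of $\Pic(X^+)$, so it is certainly not divisible. Since the flop $\chi$ is small it induces an isomorphism $\Pic(X^+) \cong \Pic(X)$ of lattices carrying $E^+$ to $\widetilde{E^+}$, and an isomorphism of free abelian groups preserves primitivity. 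The same reasoning applied to $E$ shows $\widetilde{E}$ is primitive.

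First I would rewrite $\widetilde{E^+}$ in the \emph{integral} basis $\{H, E\}$ rather than in $\{-K_X, E\}$. Substituting $-K_X = rH - E$ into (\ref{Eplusequation}) gives exactly (\ref{EplusSquiggle}),
$$
\widetilde{E^+} = \alpha r H + (\beta - \alpha) E.
$$
Because $\widetilde{E^+}$ is an honest divisor class in $\Pic(X) = \Z H \oplus \Z E$, both coordinates must be integers, which yields $\alpha r \in \Z$ and $\alpha - \beta \in \Z$. Primitivity of $\widetilde{E^+}$ then says precisely that these two integer coordinates share no common factor greater than $1$, i.e. $\text{GCD}(\alpha r, \beta - \alpha) = 1$.

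Next I would run the symmetric argument on $X^+$. Writing $\widetilde{E} = \alpha^+(-K_{X^+}) + \beta^+ E^+$ from (\ref{Eequation}) and substituting $-K_{X^+} = r^+ H^+ - E^+$ gives $\widetilde{E} = \alpha^+ r^+ H^+ + (\beta^+ - \alpha^+) E^+$. As before, membership in $\Pic(X^+) = \Z H^+ \oplus \Z E^+$ forces $\alpha^+ r^+ \in \Z$ and $\alpha^+ - \beta^+ \in \Z$, and primitivity of $\widetilde{E}$ forces $\text{GCD}(\alpha^+ r^+, \beta^+ - \alpha^+) = 1$. Collecting the four integrality statements and the two GCD statements gives the proposition.

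I do not expect a serious obstacle here; the content is entirely the primitivity of the exceptional class and its transport across the flop. The one point that deserves care is confirming that the identification $\Pic(X) \cong \Pic(X^+)$ used throughout (via the tilde operation) genuinely sends the generator $E^+$ to $\widetilde{E^+}$ and is an isomorphism of \emph{lattices}, not merely of $\Q$-vector spaces; this is where the smallness of $\chi$ (hence $\widetilde{K_X} = K_{X^+}$ and the relation $\widetilde{\widetilde{D}} = D$ noted earlier) is essential, since it is exactly what guarantees primitivity is preserved.
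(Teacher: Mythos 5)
Your proposal is correct and follows essentially the same route as the paper: the paper likewise rewrites $\widetilde{E^+}$ in the integral basis $\{H,E\}$ via (\ref{EplusSquiggle}) and deduces the integrality and GCD conditions from the fact that $\widetilde{E^+} \in \Pic(X)$ is not divisible. Your additional justification of primitivity (the exceptional divisor is a generator of the Picard lattice and the small map $\chi$ induces a lattice isomorphism) only makes explicit what the paper leaves implicit.
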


Since we are assuming $X$ is smooth, we can use the results of Batyrev and Kontsevich (see \cite{Ba}) which state that Hodge numbers are preserved under a flop.  The only interesting Hodge number in our situation is $h^{1,2}(X)$.  For smooth Fano threefolds $Y$ with $\rho(Y) = 1$, all these numbers are known (see \cite{IP99}).  In the $E1$ case, since we are blowing up a smooth curve of genus $g$, it can be shown (see \cite{CG72}) that $h^{1,2}(X) = h^{1,2}(Y) + g$.  Thus checking to see if $h^{1,2}(X) = h^{1,2}(X^+)$ is equivalent to checking the following equality, valid in the E1-E1 case:
\begin{equation}
\label{hodgecheck} h^{1,2}(Y) + g = h^{1,2}(Y^+) + g^+
\end{equation}

Using these checks to eliminate some possible cases from the output of the computer program yields Table (\ref{E1E1table}) in the Tables section of this paper.

\subsection{Elimination of Cases}
In this section, we will eliminate some of the numerical cases listed in Table (\ref{E1E1table}).

\begin{proposition}
\label{shtest1}
The following $E1-E1$ numerical cases in Table (\ref{E1E1table}) are not geometrically realizable: $$Nos. \, 27, 32,36,53,62,66,73,82,85,91,95.$$
\end{proposition}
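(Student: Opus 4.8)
The plan is to show that in each listed case the strict transform $\widetilde{E^+}$ of the opposite exceptional divisor is forced to behave in a way that no actual geometry can support. First I would record the two structural facts available for free. Since $E^+$ is the exceptional divisor of the blow-up $\phi^+$, it is rigid, so $h^0(\O_{X^+}(E^+)) = 1$; by Lemma \ref{FirstLem} the flop preserves this, giving
\begin{equation*}
h^0(\O_X(\widetilde{E^+})) = 1.
\end{equation*}
Second, by (\ref{EplusSquiggle}) the class of $\widetilde{E^+}$ on $X$ is $\alpha r H + (\beta - \alpha)E$, so its image $S := \phi_*\widetilde{E^+}$ is an irreducible surface in the linear system $|\alpha r H|$ on $Y$ (forcing $\alpha r \ge 1$), and $\widetilde{E^+} = \phi^*S - mE$ with $m = \alpha - \beta \in \Z_{\ge 0}$; equivalently $S$ passes through the blown-up curve $C$ with multiplicity $m$.

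The contradiction would then come from two complementary levers. Geometrically, $\widetilde{E^+}$ is isomorphic to $E^+$ away from the finitely many flopping curves, and $E^+$ is a ruled surface over the genus-$g^+$ curve $C^+$, so $S$ is birationally ruled over a curve of genus $g^+$. For the listed cases the index $r$ and degree $(-K_Y)^3$ pin down $Y$ inside the finite Fano classification, and $\alpha r$ is small, so I would argue that no irreducible $S \in |\alpha r H|$ can simultaneously carry the birational genus $g^+$ and the prescribed multiplicity $m$ along a degree-$d$, genus-$g$ curve $C$. Cohomologically, I would instead compute the Euler characteristic $\chi(\O_X(\widetilde{E^+}))$ by Riemann--Roch: the identity $-K_X \cdot c_2(X) = 24$ together with the blow-up formulas for $c_2$ fixes the needed Chern intersections $H\cdot c_2(X)$ and $E \cdot c_2(X)$ from the data of $Y$ and $C$, while Serre duality gives $h^3(\O_X(\widetilde{E^+})) = h^0(\O_X(K_X - \widetilde{E^+})) = 0$, since $(K_X-\widetilde{E^+})\cdot(-K_X)^2 < 0$ rules out effectivity. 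If the middle cohomology also vanishes, then $h^0 = \chi(\O_X(\widetilde{E^+}))$, and in each listed case this Euler characteristic computes to a value different from $1$, contradicting the displayed rigidity.

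I would then run the symmetric argument on $X^+$, where $\widetilde{E}$ has class $\alpha^+ r^+ H^+ + (\beta^+ - \alpha^+)E^+$ and image $S^+ \in |\alpha^+ r^+ H^+|$ with multiplicity $m^+ = \alpha^+ - \beta^+$ along $C^+$, and cross-check against the flop invariant $e$ (using $r^3 \mid e$) and the Hodge equality (\ref{hodgecheck}) to dispatch any borderline numerics.

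The hard part will be the vanishing of $h^1$ and $h^2$ needed to identify $h^0(\O_X(\widetilde{E^+}))$ with $\chi(\O_X(\widetilde{E^+}))$: the divisor $\widetilde{E^+}$ is in general neither nef nor anti-nef, so Kawamata--Viehweg vanishing does not apply to it directly. I expect the cleanest route is to transfer the computation to $Y$, where $\alpha r H$ is nef and vanishing theorems are available, and then to control the correction coming from the multiplicity-$m$ condition along $C$ through the ideal-sheaf sequence for $\I_C^m \otimes \O_Y(\alpha r H)$. A secondary obstacle is that the pair $(g,d)$ does not determine $C$ uniquely, so the genus/multiplicity obstruction must be formulated to hold for \emph{every} admissible smooth curve $C$ realizing the numerics, not merely for a general one.
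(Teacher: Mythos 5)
Your opening moves coincide with the paper's: both start from the rigidity $h^0(\O_X(\widetilde{E^+}))=1$ and the expression (\ref{EplusSquiggle}), $\widetilde{E^+} = \alpha r H + (\beta-\alpha)E$. But the mechanism you propose for deriving a contradiction has a genuine gap, and it is exactly the one you flag as ``the hard part.'' Your cohomological lever runs: compute $\chi(\O_X(\widetilde{E^+}))$ by Riemann--Roch, kill $h^3$ by Serre duality, and \emph{if} $h^1=h^2=0$ then $h^0=\chi\neq 1$, contradiction. The logic here is backwards. On the hypothetical $X$, rigidity already forces $h^0=1$; so if $\chi\neq 1$, what your computation proves is merely that the intermediate cohomology of $\O_X(\widetilde{E^+})$ does \emph{not} vanish on $X$ --- no contradiction with anything. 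Since $\widetilde{E^+}$ is a rigid effective divisor (neither nef nor anti-nef, as you note), there is no vanishing theorem to invoke, and none should be expected: nonvanishing of $h^1$ or $h^2$ is the generic behavior for such divisors, and it is fully consistent with existence of $X$. An Euler-characteristic computation can never rule these cases out by itself; what is needed is a \emph{lower bound} on $h^0$ exceeding $1$, i.e. an actual production of moving sections. Your geometric lever (that no irreducible $S\in|\alpha r H|$ can be birationally ruled of genus $g^+$ with multiplicity $m$ along $C$) is the kind of statement that could give this contradiction's substitute, but it is only asserted, not argued, and as stated it would have to be checked against every admissible $C$ and every normal bundle --- your ``secondary obstacle,'' which you also leave unresolved.

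The paper obtains the needed lower bound on $h^0$ with no vanishing theorem at all, via a decomposition trick: since $\widetilde{E^+}\sim \alpha r H+(\beta-\alpha)E$, one has $|\widetilde{E^+}|\supset(\alpha-\beta)\,|H-E|+(\alpha r-\alpha+\beta)\,|H|$, and in each listed case $\alpha r -\alpha+\beta\geq 1$ while $\dim|H|>0$; so rigidity forces $|H-E|=\emptyset$. It then shows $|H-E|\neq\emptyset$ by pushing down to $Y$: $h^0(X,H-E)=h^0(Y,\I_C(H))$, and left-exactness of $0\to H^0(Y,\I_C(H))\to H^0(Y,\O_Y(H))\to H^0(C,\O_C(H))$ gives $h^0(Y,\I_C(H))\geq h^0(Y,\O_Y(H))-h^0(C,\O_C(H))$, with both terms computed by Riemann--Roch (e.g. $3-2=1$ in case 27). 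Note that this only ever involves the reduced ideal $\I_C$ twisted by the \emph{fundamental} divisor $H$ --- not $\I_C^m\otimes\O_Y(\alpha r H)$ as in your proposed repair --- so the count depends only on $(g,d)$ and is uniform over all smooth curves realizing the numerics, which is precisely how the paper dodges both of your obstacles. If you want to salvage your route, replace the Euler-characteristic step by this kind of direct section count; as written, the proposal does not reach a contradiction in any of the eleven cases.
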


\begin{proof}
We will show that case number $27$ can not exist.  A similar argument can be applied to the remaining cases.
From the data in case $27$, we have that
$$\begin{array}{cl}
        \widetilde{E^+} & \sim -4K_X - E \\
                        & \sim 4(-K_Y - E) - E \\
                        & \sim 4(2H -E) - E \\
                        & \sim 8H - 5E.
\end{array}$$
Since $\widetilde{E^+}$ is the strict transform of an exceptional divisor, $\widetilde{E^+}$ must be the unique member of the linear system $|\widetilde{E^+}|$.  However, since $$|\widetilde{E^+}| = |8H-5E| \supset 5|H-E| + 3|H|$$and since dim $|H| > 0$, the linear system $|H-E|$ must be empty.  We will show that this is in fact not the case.

To show that $|H-E| \ne \emptyset$, it is equivalent to show that $h^0(X,H-E) > 0$.  Since we are in the $E1$ case, this is in turn equivalent to showing that $h^0(Y,H-C) > 0$, where $C$ is the smooth curve of genus 0 and degree 1 being blown up.  $h^0(Y,H-C) > 0$ is equivalent to $h^0(Y,\I_C(H)) > 0$, which is what we will show.  Here $\I_C$ is the ideal sheaf of the curve $C$.

Start with the short exact sequence:
$$ 0 \rightarrow \I_C \rightarrow \O_Y \rightarrow \O_C \rightarrow 0.$$

Twist by $H$ to get the short exact sequence:
$$ 0 \rightarrow \I_C(H) \rightarrow \O_Y(H) \rightarrow \O_C(H) \rightarrow 0.$$

The corresponding long exact sequence of cohomology then gives us the exact sequence:
$$ 0 \rightarrow H^0(Y,\I_C(H)) \rightarrow H^0(Y,\O_Y(H)) \rightarrow H^0(Y,\O_C(H)) \rightarrow \ldots$$

To compute the dimensions of each of the vector spaces, we will use the Riemann-Roch formula.  From the well known formula for smooth (weak) Fano threefolds, we have:
$$h^0(jH) = \frac{j(r+j)(r+2j)}{12}H^3 + \frac{2j}{r} + 1$$ Plugging in our values of $j = 1$, $r = 2$ and $H^3 = -K^3_Y/r^3 = 8/2^3 = 1$, we get that $h^0(Y,\O_Y(H)) = 3$.

To compute $H^0(Y,\O_C(H))$ we will restrict to the curve $C$ and compute $H^0(C,\O_C(H))$ using the Riemann-Roch theorem for nonsingular curves. We then have $$ h^0(C,\O_C(H)) = \deg H|_C + 1 - g = 1 + 1 - 0 = 2.$$ Notice here that $h^1(C,\O_C(H))$ vanishes from Serre Duality, as the degree of $-H|_C + K_C$ is -3.  Since $h^0(Y,\O_Y(H)) = 3$ and $H^0(Y,\O_C(H)) = 2$, from the long exact sequence above we have $h^0(Y,\I_C(H)) \ge 1$ and thus the linear system $|H-E|$ is non-empty.
\end{proof}

\begin{proposition}
\label{e1e1case9}
Case No. 9 on the $E1-E1$ table (\ref{E1E1table}) does not exist.
\end{proposition}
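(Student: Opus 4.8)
The plan is to apply the same rigidity test established in Proposition \ref{shtest1}. The divisor $\widetilde{E^+}$ is the strict transform of the \emph{irreducible} exceptional divisor $E^+$, hence it is the unique effective member of its own linear system $|\widetilde{E^+}|$ on $X$. First I would feed the numerical data recorded for Case 9 into $\widetilde{E^+} = \alpha(-K_X) + \beta E$ and substitute $-K_X = rH - E$ to obtain an explicit expression $\widetilde{E^+} \sim aH - bE$ in the basis $\{H,E\}$, with $a \ge b > 0$. I would then peel off a movable summand exactly as in the Case $27$ computation: since $|aH - bE| \supseteq b\,|H-E| + (a-b)\,|H|$ and $\dim|H| > 0$, the rigidity of $\widetilde{E^+}$ forces the auxiliary system $|H-E|$ (or, if the coefficients demand it, a suitable $|cH - E|$) to be \emph{empty}. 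The entire proof then reduces to contradicting this by exhibiting a section of that auxiliary system.

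To do so I would translate through the blow-up $\phi$, using that in the $E1$ case $h^0(X,\, cH - E) = h^0(Y,\, \I_C(cH))$, where $C \subset Y$ is the smooth curve of genus $g$ and degree $d$ dictated by Case 9. The short exact sequence $0 \to \I_C(cH) \to \O_Y(cH) \to \O_C(cH) \to 0$ then reduces the claim to the numerical comparison $h^0(Y, \O_Y(cH)) > h^0(C, \O_C(cH))$. The first term comes from the Fano Riemann--Roch formula $h^0(jH) = \frac{j(r+j)(r+2j)}{12}H^3 + \frac{2j}{r} + 1$ evaluated at the $r$ and $H^3 = (-K_Y)^3/r^3$ of Case 9, and the second from Riemann--Roch on the curve, $h^0(C, \O_C(cH)) = cd + 1 - g + h^1(C,\O_C(cH))$, with the $h^1$ term killed by Serre duality once $cd$ exceeds $2g-2$.

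The step I expect to be the main obstacle is that, unlike Case $27$ where the count gave a strict inequality, the two Riemann--Roch numbers for Case 9 may well come out \emph{equal}, so the restriction map $H^0(Y, \O_Y(cH)) \to H^0(C, \O_C(cH))$ carries no automatic kernel and the dimension count alone is inconclusive. In that event the decisive point becomes showing that $C$ fails to impose independent conditions on $|cH|$, i.e.\ that this restriction map is not surjective so that $h^0(Y,\I_C(cH)) > 0$ nonetheless. I would attack this by first establishing $h^1(Y, \O_Y(cH)) = 0$ via Kawamata--Viehweg vanishing (valid since $cH$ is ample on the Fano $Y$), which identifies the cokernel of the restriction with $h^1(Y, \I_C(cH))$, and then arguing that a curve of the prescribed degree and genus either forces $h^1(Y, \I_C(cH)) \neq 0$ or simply cannot sit on $Y$ as an $H$-degree-$d$, genus-$g$ curve at all. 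Should this borderline analysis resist, the natural fallback is to run the symmetric test on the $Y^+$ side, checking the rigidity of $\widetilde{E}$ against $|c^+H^+ - E^+|$ and deriving the contradiction there; one of the two sides should yield the strict inequality needed to complete the argument.
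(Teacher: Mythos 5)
Your strategy breaks down at its very first numerical step, and not in a repairable way. For Case 9 the table gives $\alpha = 3$, $\beta = -1$, $r = 1$, so $-K_X = H - E$ and $\widetilde{E^+} = 3(-K_X) - E = 3H - 4E$: the coefficient of $E$ \emph{exceeds} the coefficient of $H$. Your assumed normal form $aH - bE$ with $a \ge b > 0$ is therefore false here, and the decomposition $|aH-bE| \supseteq b|H-E| + (a-b)|H|$ would need the negative multiple $(a-b)|H| = -|H|$. Indeed no decomposition of $3H-4E$ into nonnegative multiples of $|H|$ and systems $|cH - E|$ with $c \ge 1$ exists at all: matching the $E$-coefficient requires four pieces containing $-E$, which already contribute $H$-coefficient at least $4 > 3$. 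This kills the whole plan, because since $r=1$ the system $|H-E|$ \emph{is} the anticanonical system $|-K_X|$; it is certainly nonempty (your Riemann--Roch count correctly gives $h^0 \ge 7 - 3 = 4$) and is even basepoint free by assumption (v), so its non-emptiness can never produce a contradiction. The only effective decompositions of $3H-4E$, e.g.\ $2(H-E) + (H-2E)$ or $2(H-2E) + H$, involve $|H - 2E|$, i.e.\ anticanonical divisors of $Y$ vanishing \emph{doubly} along $C$; your exact sequence $0 \to \I_C(cH) \to \O_Y(cH) \to \O_C(cH) \to 0$ only detects order-one vanishing, and the double-vanishing condition count involves $h^0(C, N^*_{C/Y}\otimes\O_C(H))$, for which there is no a priori upper bound. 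Your final fallback of running the test on the $Y^+$ side fails identically, since Case 9 is numerically self-symmetric ($r^+ = 1$, $d^+ = 3$, $g^+ = 1$, $\beta^+ = -1$, $\alpha^+ = 3$).

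The paper's actual proof uses a different mechanism, which shows what extra input is needed: assumption (v) combined with the projective geometry of $Y$. By classification, $Y$ (with $-K_Y^3 = 8$, index 1) is the complete intersection of three quadrics in $\P^6$, anticanonically embedded, and the curve $C$ of genus 1 and degree 3 spans a plane $\langle C \rangle \cong \P^2$ by Riemann--Roch. Basepoint freeness of $-K_X = \phi^*(-K_Y) - E$ forces $\text{Bs}|-K_Y - C| = C$; but a hyperplane contains $C$ if and only if it contains $\langle C \rangle$, so this base locus equals $\langle C \rangle \cap Y$, which is cut out by quadrics inside $\P^2$ and hence can never be the plane cubic $C$. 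So the contradiction comes from the base locus of $|-K_X|$ itself, not from the rigidity of $\widetilde{E^+}$; some argument of this geometric kind, going beyond linear-system bookkeeping in $\Pic(X)$, appears unavoidable for this case.
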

\begin{proof}
By classification, $Y$ is embedded in $\P^6$ via |-$K_Y|$ and is the complete intersection of three quadrics.  The curve $C$ is of genus 1 and degree 3, but no such plane curve exists in an intersection of quadrics. Thus the case does not exist.
\end{proof}

\subsection{Geometric Realization of Cases}
In this section, we discuss the geometric existence of some cases found in Table (\ref{E1E1table}).

\begin{remark}
Some of the cases listed in Table (\ref{E1E1table}) were previously shown to exist as weak Fano varieties by Takeuchi.  In other instances, the methods used by Iskovskikh and other to show the existence of the smooth Fano Threefold with the given numerical invariants directly apply.  These cases are mentioned with their appropriate references in the corresponding tables.  In addition, some cases were recently shown to exist by Blanc and Lamy \cite{BL11} using different methods.
\end{remark}

\begin{proposition}
\label{MplusFmethod}
Case Nos. 29,33,49,50,51,52,75,76,89,90,98,99, and 103 on the $E1-E1$ table (\ref{E1E1table}) exist.
\end{proposition}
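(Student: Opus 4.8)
The plan is to realize each of these numerical cases geometrically by exhibiting an explicit curve $C \subset Y$ (and its flop partner $C^+ \subset Y^+$) whose blow-up produces a genuine smooth weak Fano threefold $X$ satisfying assumptions (i)--(vi). For each listed case number, the table records the Fano target $Y$ (via its index $r$ and degree $(-K_Y)^3$), the genus $g$, and the degree $d$ of the curve to be blown up, together with the corresponding primed data for $Y^+$. My first step would be to read off these invariants and, using the classification of smooth Fano threefolds of Picard number one, identify the ambient variety $Y$ together with its projective model under $|-K_Y|$ or $|H|$. Then I would construct a smooth curve $C \subset Y$ of the prescribed genus and degree, and form $X = \mathrm{Bl}_C Y$ with exceptional divisor $E = \phi^{-1}(C)$ and $-K_X = rH - E$.

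The essential point is to verify that this $X$ is actually weak Fano with the anticanonical morphism $\psi$ small, rather than merely a blow-up with the correct numerical invariants. I would check that $-K_X = \phi^*(-K_Y) - E$ is nef and big and that $|-K_X|$ is basepoint free, which reduces (as in Proposition \ref{shtest1}) to showing that the ideal sheaf $\I_C(-K_Y)$ is globally generated and that $C$ imposes the expected conditions on $|-K_Y|$. This is where the short exact sequence
$$ 0 \rightarrow \I_C(-K_Y) \rightarrow \O_Y(-K_Y) \rightarrow \O_C(-K_Y) \rightarrow 0 $$
and a Riemann--Roch count of $h^0(Y,\O_Y(-K_Y))$ against $h^0(C,\O_C(-K_Y))$ do the work. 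The presence of ``$M + F$'' in the proposition label suggests the authors construct $-K_X$ as a sum $M + F$ of a movable part and a fixed (or fiber) part, so I would try to write the fundamental system as the pushforward of a suitable linear system that visibly separates points along $C$ and verify the resulting base-point freeness directly on $Y$.

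Having produced a valid $X$, the remaining task is to confirm that the flop $\chi$ on the far side really is an $E1$ contraction onto $Y^+$ of the prescribed type. By Lemma \ref{FirstLem} the flop exists and $X^+$ is again smooth weak Fano with $\rho(X^+)=2$; I would then need to show that the $K_{X^+}$-negative extremal contraction $\phi^+$ is divisorial of type $E1$ with $\phi^+(E^+)$ a smooth curve $C^+$ of genus $g^+$ and degree $d^+$. The natural approach is to identify the flopping curves explicitly as the curves $\Gamma \subset X$ with $-K_X \cdot \Gamma = 0$ (e.g.\ secant or bisecant lines to $C$, or components contracted by $|-K_Y - C|$), track them through $\chi$, and compute the normal bundle of $E^+ = \widetilde{E^+}$ using the relation $\widetilde{E^+} = \alpha r H + (\beta-\alpha)E$ from (\ref{EplusSquiggle}) together with the class of $\beta^+ = -r/r^+$ from Lemma \ref{betathm}. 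The main obstacle, I expect, is precisely this symmetric verification of the other side: matching the geometry of $X^+$ to a known Fano $Y^+$ of the correct degree and realizing $C^+$ as an honest smooth curve of the right genus and degree requires either an explicit birational description of $\chi$ or an appeal to the classification results for the relevant $Y^+$. For the cases at hand I anticipate that each falls into a uniform family of constructions—likely projections from, or complete intersections containing, the curve $C$—so that a single template handles all thirteen case numbers with only the numerical data varying, which is why the proposition bundles them together.
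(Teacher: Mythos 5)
Your skeleton (construct $C \subset Y$, blow up, verify that $|-K_X|$ is basepoint free and $-K_X$ is nef and big, then identify the link) matches the paper's, and you correctly guessed that ``M plus F'' refers to a movable-plus-fixed decomposition. But there is a genuine gap at the crucial step: you propose to establish nefness and basepoint-freeness by showing $\I_C(-K_Y)$ is globally generated via ``a Riemann--Roch count of $h^0(Y,\O_Y(-K_Y))$ against $h^0(C,\O_C(-K_Y))$.'' Dimension counts of this kind only compute $h^0(Y,\I_C(-K_Y))$ (exactly as in Proposition \ref{shtest1}, where the exact sequence is used to show a system is \emph{nonempty}, i.e. for a non-existence argument); they cannot detect base points or $K_X$-negative curves. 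The issue is not a formality: in these cases $X$ is weak Fano but not Fano, so $C$ must admit $K_X$-trivial multisecants (e.g. $4$-secant lines in the $\P^3$ cases) while admitting no worse ones — a slightly special curve of the same genus and degree with a $5$-secant line $\Gamma$ would give $-K_X\cdot\widetilde{\Gamma} = 4 - 5 < 0$ and destroy nefness. So the construction genuinely depends on choosing $C$ well, and your proposal contains no mechanism for making or exploiting such a choice.

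The paper's mechanism — the idea your proposal is missing — is to use Knutsen's theorem \cite{Kn02} to place $C$ on a smooth K3 surface $S \in |-K_Y|$ with $\Pic(S) = \Z H \oplus \Z C$. Since the strict transform $\widetilde{S}$ lies in $|-K_X|$ and the restriction $H^0(\O_X(-K_X)) \to H^0(\O_{\widetilde{S}}(-K_X|_{\widetilde{S}}))$ is surjective, basepoint-freeness of $|-K_X|$ reduces to basepoint-freeness of the complete system $|(rH-C)|_S|$ on the K3; by Saint-Donat \cite{StD94} it then suffices to show the fixed part $F$ in the decomposition $|rH-C| = M + F$ vanishes, which the paper kills case by case with lattice arithmetic in $\Z H \oplus \Z C$ (writing $F = aH + bC$ and deriving contradictions from intersection numbers). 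Separately, your plan to verify the right-hand side of the link explicitly (flopping curves, normal bundle of $E^+$, identification of $Y^+$ and $C^+$) is unnecessary: once $X$ is weak Fano with $\psi$ small, is not Fano, and is absent from the lists of \cite{JPR05} and \cite{JPR07}, the completeness of the numerical classification forces the link to be the unique table row with those invariants — this is how the paper concludes each case, and it saves precisely the ``main obstacle'' you anticipate. Finally, the proposition is not proved by a single template: cases 76, 99, and 50 are handled by citation to \cite{JPR05}, to \cite{BL11}, and as the residual curve of case 103, respectively.
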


\begin{proof} We proceed case by case:

\textit{Case No. 29}:  Let $C$ be a curve of genus 0 and degree 5 inside of $Y_3 \subset \P^4$, a smooth cubic.  By \cite{Kn02}, $C$ is contained in a smooth K3-surface $S$, with $\Pic(S) = \Z H \oplus \Z C$.  By considering the short exact sequence of quadrics containing $C$, we see that the dimension of the linear system on $S$ of $|2H-C|$ is at least two.  Writing $|2H-C| = F + M$ as the sum of the fixed and movable components, we see that dim $|M| \ge 2$ and by \cite{Kn02}, deg $M \le 7$.  The linear system $|M|$ is not a pencil, since if it were, $2H-C = F + 2E$, where $F$ is a line and $E$ is an elliptic curve.  There would then be three contractions on $S$ of $C$, $F$, and $E$, contradicting the fact that $\rho(S) = 2$. By Bertini's theorem, we can choose a smooth curve $C'$ in $|M|$.  By classification and by \cite{Kn02}, our choices for the degree and genus of $C'$ are then $(g,d) \in \{(4,6),(2,6),(2,5)\}$.
After squaring both sides, the equation $2H - C = F + C'$ becomes $2 = F^2 + 2FC'+2g_{C'}-2$.  If $d_{C'} = 6$, we have then that $0 \ge FC' = 3 - g_{C'}.$ Positivity follows from the fact that $C'$ is nef.  Using this inequality, we can now eliminate case $(g,d) = (4,6)$.
For the remaining two cases, write $F = aH + bC$.  When the degree of $C' = 6$, $F$ is a line with $F^2 = -2$.  When the degree of $C'$ is 5, $F$ is a smooth irreducible conic and $F^2 = -2$.  Computing $F.C, F.C'$ and $F.H$ in these remaining cases allows us to solve for $a$ and $b$ and we see that both $a$ and $b$ are not integers as required.
Thus $M$ has degree 7 and $F$ is empty.  The linear system $|2H-C|$ on the $K3$-surface $S$ is then basepoint free by \cite{StD94} and blowing up $C$ gives the smooth weak Fano threefold that is case number 29.  By classification, the blowup $X$ of $_6C_9$ is not Fano and since $(X, \P^3,_6C_9)$ does not appear on any of the lists of \cite{JPR05} or \cite{JPR07}, case No. 29 exists.

\textit{Case No. 33}: Let $C$ be a curve of genus 2 and degree 6 in $Y_3 \subset \P^4$, a smooth cubic.  By \cite{Kn02}, we can pick a smooth K3-surface $S$ containing $C$ in $Y_3$ such that $\Pic(S) = \Z H \oplus \Z C$.  In particular, the degree of any curve on $S$ must be even.  Computing the dimension of the vector space of global sections, we see that dim $|2H-C| \ge 2$. Writing $|2H-C| = F + M$ as the sum of the fixed and movable components, we see that dim $|M| \ge 2$ and by \cite{Kn02}, deg $M \le 6$.  The linear system $M$ is not a pencil since the degree of the pencil would have to be at least 9.  Thus by Bertini's theorem, $|M|$ contains a smooth irreducible curve $C'$.  Again by \cite{Kn02} and the condition imposed on the degree of any curves in $S$ by our choice of generators of $\Pic S$, the degree of $C'$ must be 8 with genus 3.  Thus the fixed component $F$ of the linear system is empty and thus basepoint free by \cite{StD94}. Blowing up $C$ then gives a smooth weak Fano threefold $X$ with the numerical invariants of our case No. 33.  By classification, $X$ is not Fano and does not appear on any of the lists of \cite{JPR05} or \cite{JPR07}, case No. 33 exists.

\textit{Case No. 49}: Here $Y = \P^3$ and $C \subset \P^3$ is a curve of genus 2 and degree 8.  Using short exact sequences, we can find that $h^0(\P^3,\I_C(4)) \ge 4$ and thus $C$ is contained in a smooth quartic K3-surface $S$.  By \cite{Kn02}, we can write $\Pic(S) = \Z H \oplus \Z C$.  In particular, the degree of every curve on $S$ must be divisible by 4.
Since $h^0(\P^3,\I_C(4)) \ge 4$, dim $|4H-C|\big{|}_S \ge 2$.  Decompose $|4H-C| = F + M$, where $F$ is the fixed part and $M$ is the movable part.  Then dim $|M| \ge 2$ and by \cite{Kn02} deg $M \le 8$.  The linear system $|M|$ is not a pencil since $(4H - C)^2 > 0$.  If the degree of $M \ne 8$, by Bertini's theorem, we get another curve $C'$ of genus 3 and degree 4.  Any other possible combination of genus and degree were eliminated by our choice of generators for Pic S.  Since $C'$ is a hyperplane section, $C'$ is very ample.  Then writing $C' = H$, we have $4H-C = F + C'$, where $F$ is an irreducible rational curve of degree 4. Dotting this equation with $F$ gives $FC'=0$, a contradiction.
Thus $M$ has degree 8 and $F$ is empty.  The linear system $|4H-C|$ is then basepoint free by \cite{StD94} and blowing up the genus 2 degree 8 curve $C$ gives a smooth weak Fano threefold $X$, which is our case number 49. By classification, $X$ is not Fano and does not appear on any of the lists of \cite{JPR05} or \cite{JPR07}, and thus case No. 49 exists.

\textit{Case No. 50}: This case exists as the blow up of the residual curve of case No. 103.  See the section regarding Case No. 103 for details.

\textit{Case No. 51}: Consider a curve $C'$ of genus 2 and degree 6 inside a smooth k3-surface $Q$ of degree 4 in $\P^3$ by \cite{Kn02}.  By classification, $C'$ is not contained in a quadric, so $|2H-C| = \emptyset$.  Using short exact sequences, we see that $h^0(Q,I_{C'}(4)) \ge 3$ and thus $C$ is contained in a cubic surface.
Decompose $|3H-C'| = F + M$ into its fixed and movable components, with deg $M \le 6$ and dim $|M| \ge 2.$  If $|M|$ were a pencil, deg |M| = 6 and $F = \emptyset$.  Thus $|3H - C'|$ is basepoint free and therefore so is $|4H-C'|$.  Else by Bertini's theorem, we get another curve $C'' \in |M|$ of degree $d'' \le 6$ and genus $g_{C''} \ge 2$.
By \cite{Kn02}, $C''$ lies on a $K3$-surface and we only have to consider following two cases: $(g_{C''},d^{''}) = (2,5) $ or $(3,4)$.  in the first case (2,5), denote the fixed rational curve $F$ by $L$.  Since $C''$ is contained in a smooth quadric, $2H = C'' + L''$.  Therefore $3H-C' = L + 2H-L''.$  Dot with $L''$ to give 3 - C''.L'' = L.L'' + 2 + 2, and so $C'.L''= - 1 - L.L'' = -1 or -2$, which is a contradiction.  For the second case of (3,4), since $C''$ is contained in a plane, $M = H$ and so $3H-C' = F + H$.  Then $2H-C' = F$, contradicting the fact that $C'$ is contained in a quadric.
Therefore $d'' = 6$ and $F$ is empty. So $|3H-C'|$ is basepoint free and $C$ exists.  Since $|4H-C| \supset |3H-C|$, $|4H-C|$ is basepoint free.  By classification, $X$ is not Fano and does not appear on any of the lists of \cite{JPR05} or \cite{JPR07}, and thus case No. 51 exists.
It can be shown that the blow up of the original curve $C'$ with $g = 2$ and $d = 6$ gives rise to a weak Fano $X$ with $-K_X^3 = 18$, appearing on Table A7: Conic Bundle -Birational of \cite{JPR07}, Case No. 2.

\textit{Case No. 52}:  Let $C'$ be a curve of genus 2 and degree 5 inside a smooth K3 surface $Q_1$ of degree 4 in $\P^4$ by \cite{Kn02}.  Note that since $d \ge 2g+1$, $C'$ is projectively normal.  Using this fact and short exact sequences, we see that dim $|4H-C'| = 14$.  Since $C = |4H-C'|$, the degree of $C$ is 11.  Write $|4H-C'|$ as $M + F$, the sum of its movable and fixed components.  Then dim $|M| = 14$.  $|M|$ is not a pencil since if it were, its degree would be greater than 42, contradicting the fact that it has degree 11.  So by Bertini's theorem, we can then find a smooth irreducible curve which we will again denote by $M$.  If the fixed part $F$ exists, deg $M \le 10$. But by \cite{Kn02}, no curve of genus 14 and degree less than 10 can exits on a $K3$ surface.  Thus $C = M$ and $F = \emptyset$.  The linear system $|4H-C|$ is then basepoint free by \cite{StD94} and blowing up the genus 14 degree 11 curve $C$ gives a smooth weak Fano threefold $X$, which is our case number 52. By classification, $X$ is not Fano and does not appear on any of the lists of \cite{JPR05} or \cite{JPR07}, and thus case No. 52 exists.
It can be shown that by blowing up the other curve $C'$ of genus 2 and degree 5 gives the smooth Fano threefold No. 19 in \cite{IP99}.

\textit{Case No. 75}: Let $C \subset \P^3$ be a smooth curve of genus 3 and degree 8.  By \cite{Kn02}, $C$ lies on a smooth quartic K3-surface $S$ with $\Pic(S) =  \Z H \oplus \Z C$. In particular, the degree of any curve on $C$ must be divisible by 4.  Using short exact sequences, we can show that dim $|4H-C| \ge 3$.  Write $|4H-C| = M + F$ as the sum of its fixed and movable components.  Then dim $|M| \ge 3$ and deg $|M| \le 8$.  The linear system $|M|$ is not a pencil since the degree of the pencil would have to be at least 9.  Thus by Bertini's theorem, $|M|$ contains a smooth irreducible curve $C'$.  By \cite{Kn02} and the conditions imposed on the degree of curves in $S$ by our choice of generators of $\Pic(S)$, $C'$ must have degree 8 with genus 3. Thus the fixed component $F$ is empty and $|4H-C|$ is basepoint free.  Blowing up $C$ the gives a smooth weak Fano threefold $X$, which is our case number 75. By classification, $X$ is not Fano and does not appear on any of the lists of \cite{JPR05} or \cite{JPR07}, and thus case No. 75 exists.

\textit{Case No. 76}: This case is shown to exist in \cite{JPR05}, page 40, in their example No. 19.

\textit{Case No. 89}:  Let $C \subset \P^3$ be a smooth curve of genus 4 and degree 8.  By \cite{Kn02}, $C$ lies on a smooth quartic K3-surface $S$ with $\Pic(S) =  \Z H \oplus \Z C$. In particular, the degree of any curve on $C$ must be divisible by 4.  Using short exact sequences, we can show that dim $|4H-C| \ge 4$.  Write $|4H-C| = M + F$ as the sum of its fixed and movable components.  Then dim $|M| \ge 4$ and deg $|M| \le 8$.  The linear system $|M|$ is not a pencil since the degree of the pencil would have to be at least 12.  Thus by Bertini's theorem, $|M|$ contains a smooth irreducible curve $C'$.  By \cite{Kn02} and the conditions imposed on the degree of curves in $S$ by our choice of generators of $\Pic(S)$, $C'$ must have degree 8 with genus 3. Thus the fixed component $F$ is empty and $|4H-C|$ is basepoint free.  Blowing up $C$ the gives a smooth weak Fano threefold $X$, which is our case number 89. By classification, $X$ is not Fano and does not appear on any of the lists of \cite{JPR05} or \cite{JPR07}, and thus case No. 89 exists.

\textit{Case No. 90}: Let $C \subset \P^3$ be a curve of genus 0 and degree 7.  By \cite{Kn02}, $C$ lies on a smooth quartic K3-surface $S$ with $\rho(S) =2 $.  Using short exact sequences, we can show that dim $|4H-C| \ge 4.$  Write $|4H-C| = M + F$ as the sum of its fixed and movable components.  Then dim $|M| \ge 4$ and deg $|M| \le 9$.  The linear system $|M|$ is not a pencil since the degree of the pencil would have to be at least 12.  Thus by Bertini's theorem, $|M|$ contains a smooth irreducible curve $C'$.  Since $\rho(S) = 2$ and $C$ is rational, $F$ must be irreducible.  Otherwise, each component of $F$ could be contracted separately, giving $\rho(S) > 2$.  Then $F^2 = -2$, and squaring both sides of $4H - C = F + C'$ gives that $F.C' = 5 - g_{C'}$.  Since $C'$ is nef, $g_{C'} \le 5$.  From the classification of space curves, the only possible choices for genus and degree of $C'$ are $(d,g) \in \{(4,6),(4,7),(4,8),(5,7),(5,8)\}$.
If the degree of $C' = 8$, the $F$ is a line, the dimension of the linear system of hyperplanes containing $F$ is one, and the degree of $H - F = 3$.  Thus the residual curve $E = H -F$ is a cubic and the linear system is a pencil contracting that cubic.  Since there are then three possible curves that can be contracted, $E, C, $ and $F$, this contradicts $\rho(S) = 2$.
If the degree of $C' = 7$, then $F$ is an irreducible rational conic contained in a hyperplane.  Then dim $|H-F|=0$ and the degree $H-F = 2$.  The residual conic $C' = H-F$ can not be irreducible, else $\rho(S) \ne 2$, so $H$ must equal $2F$.  Squaring both side then gives $H^2 = -8$, a contradiction.
Lastly, if deg $C' = 6$, then $F$ is an irreducible rational cubic curve.  By counting dimensions, $C'$ must be contained in at least one quadric surface.  This $|2H-C'|$ would be an irreducible rational conic, contradicting the fact that $\rho(S) = 2$.
Therefore the degree of $C'$ must be 9 and the fixed component $F$ of $|4H-C|$ is empty.  By \cite{StD94}, the system is basepoint free. Blowing up $C$ the gives a smooth weak Fano threefold $X$, which is our case number 90. By classification, $X$ is not Fano and does not appear on any of the lists of \cite{JPR05} or \cite{JPR07}, and thus case No. 90 exists.

\textit{Case No. 98}: Let $C \subset \P^3$ be a curve of genus 1 and degree 7.  By \cite{Kn02}, $C$ lies on a smooth quartic K3-surface $S$ with $\rho(S) =2 $.  Using short exact sequences, we can show that dim $|4H-C| \ge 4.$  Write $|4H-C| = M + F$ as the sum of its fixed and movable components.  Then dim $|M| \ge 5$ and deg $|M| \le 9$.  The linear system $|M|$ is not a pencil since the degree of the pencil would have to be at least 15.  Thus by Bertini's theorem, $|M|$ contains a smooth irreducible curve $C'$.  By classification and \cite{Kn02}, the only possible choices for genus and degree of $C'$ are $(d,g) \in \{(5,8),(6,8),(6,7)\}$.
If deg $C' = 8$, then deg $F = 1$, with $H-F = E$, an irreducible rational cubic curve.  These three contractible curves $C,F,E$ would contradict $\rho(S) =2$.  If deg $C' = 6$, the $F$ would be an irreducible conic and $H-F=F$, or equivalently $H = 2F$.  Squaring both sides gives $H^2 < 0$, a contradiction.  Thus the degree of $C'$ must be 9 and the fixed component $F$ must be empty.  By \cite{StD94}, the system is basepoint free.  Blowing up $C$ the gives a smooth weak Fano threefold $X$, which is our case number 98. By classification, $X$ is not Fano and does not appear on any of the lists of \cite{JPR05} or \cite{JPR07}, and thus case No. 98 exists.

\textit{Case No. 99}:  The arguments used as in the previous cases apply, but this case also appears in \cite{JPR05}.  This case was shown to exist in \cite{BL11}.

\textit{Case No. 103}: Let $C \subset \P^3$ be a curve of genus 2 and degree 7.  By \cite{Kn02}, $C$ lies on a smooth quartic K3-surface $S$ with $\rho(S) =2$ and $\Pic(S) =  \Z H \oplus \Z C$.  Using short exact sequences, we can show that dim $|4H-C| \ge 6.$  Write $|4H-C| = M + F$ as the sum of its fixed and movable components.  Then dim $|M| \ge 6$ and deg $|M| \le 9$.  The linear system $|M|$ is not a pencil since the degree of the pencil would have to be at least 18.  Thus by Bertini's theorem, $|M|$ contains a smooth irreducible curve $C'$.  By classification and \cite{Kn02}, the only possible choices for genus and degree of $C'$ are $(d,g) \in \{(6,8),(7,8),(8,8),(9,8),(6,7),(7,7)\}$.
If deg $C'$ = 8, after both sides of $4H-C = F + C'$, we get $0 \ge F.C' = 7 - g_{C'}$, where positivity following from $C'$ being nef.  This eliminates cases $(g,d) = (8,8)$ and $(9,8)$.
For the remaining cases, write $F = aH + bC$.  When deg $C' = 8$, $F$ is a line with $F^2 = 1$.  When deg $C' = 7$, $F$ is a smooth irreducible conic of the sum of two lines and thus $F^2 = -2$ or $-4$.  Computing $F.C, F.C'$ and $F.H$ in all these remaining four cases and using these numbers to solve for $a$ and $b$ show that $a,b \not\in \Z$, a contradiction.
Thus $M$ has degree 9 and $F$ is empty.  By \cite{StD94}, the system is basepoint free.  Blowing up $C$ the gives a smooth weak Fano threefold $X$, which is our case number 103. By classification, $X$ is not Fano and does not appear on any of the lists of \cite{JPR05} or \cite{JPR07}, and thus case No. 103 exists.
It can be checked that the residual curve $C'$ of $C$ is genus 6 and degree 9. Blowing up $C'$ the gives a smooth weak Fano threefold $X$, which is our case No. 50. By classification, $X$ is not Fano and does not appear on any of the lists of \cite{JPR05} or \cite{JPR07}, and thus case No. 50 exists.

\textit{Case No. 107}:
\label{e1e1107}This case was shown to exist by the results of I. Cheltsov and C. Shramov in their upcoming paper \cite{ChSh11}.
\end{proof}

We separate this example from the rest due to the different methods used than those in the prior examples.

\begin{proposition}
\label{ex111}
Case number $111$ on the $E1-E1$ table (\ref{E1E1table}) exists.
\end{proposition}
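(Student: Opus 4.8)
The plan is to realize $X$ as an explicit blow-up $\phi\colon X\to Y$ along a curve $C\subset Y$ and then to verify the entire Sarkisov diagram (\ref{Firstfig}) by hand, rather than passing through a $K3$ surface as in Proposition \ref{MplusFmethod}. The reason a different method is needed is that the curve attached to Case 111 does not lend itself to the fixed-plus-movable analysis on a $K3$ surface with $\Pic$ of rank two: either Knutsen's conditions \cite{Kn02} fail to place $C$ on such a surface, or the system $|rH-C|$ retains a fixed component that the St.~Donat criterion \cite{StD94} cannot clear. So instead I would work directly on $Y$ and on $X$, and if the curve on one side proves intractable I would attempt the symmetric construction of $C^+\subset Y^+$ and flop back.

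First I would read off $Y$ and the invariants $(g,d)$ of $C$ (together with the predicted $Y^+$ and $(g^+,d^+)$) from Case 111, and exhibit a concrete model of $C$ --- for instance as a curve residual to a simpler curve inside a complete intersection in $Y$, or as a suitable projection --- so that the ideal sheaf $\I_C$ and the groups $H^0(Y,\I_C(jH))$ are computable. Next I would verify that $-K_X=\phi^*(rH)-E$ is nef and big and that its linear system contracts only finitely many curves; since $H^0(X,-K_X)=H^0(Y,\I_C(rH))$, this amounts to showing that the forms of degree $r$ through $C$ have no base locus on $Y$ beyond $C$ together with at most finitely many curves, so that assumptions (ii), (iii), and (v) hold. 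Having produced the weak Fano $X$, Lemma \ref{FirstLem} supplies the flop $\chi$; I would then compute $\widetilde{E^+}=\alpha(-K_X)+\beta E$ using (\ref{Eplusequation}) with $\alpha,\beta$ fixed by Lemma \ref{betathm} and (\ref{alphapluseqn}), and check that $\widetilde{E^+}$ is represented by an irreducible divisor contracted by a genuine type $E1$ morphism $\phi^+\colon X^+\to Y^+$ onto the predicted $Y^+$. Finally, by classification $X$ is not Fano and does not occur on the lists of \cite{JPR05} or \cite{JPR07}, so Case 111 is realized.

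The hard part will be verifying nef-ness of $-K_X$ without a $K3$ surface on which to invoke \cite{StD94}: controlling the base locus of $|rH-C|$ directly on $Y$ forces a careful, possibly general, choice of $C$ in its linear equivalence class, supported by a vanishing or projective-normality argument to compute $h^0(Y,\I_C(rH))$ and to exclude an unexpected fixed divisor. A second, subtler difficulty is confirming that $\phi^+$ is of type $E1$ rather than some other Mori contraction: the numerical data only predict the class $\widetilde{E^+}$, and one must show this class is effective and irreducible and that its contraction blows down a ruled divisor onto a smooth curve, which is precisely what certifies that the constructed link is the E1-E1 link recorded as Case 111 in Table (\ref{E1E1table}).
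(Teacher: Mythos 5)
Your proposal shares the paper's high-level skeleton---realize $X$ as the blow-up of an explicit genus~$0$, degree~$5$ curve $C\subset Y=\P^3$, verify $X$ is weak Fano with finitely many $K_X$-trivial curves, and then identify the result with Case 111 via the completed classification. Indeed, the paper's final step is exactly your last sentence: since $(g,d,r)=(0,5,4)$ occurs on no other line of any table, the constructed $X$ must be Case 111; this also disposes of your second worry, since there is no need to verify by hand that $\phi^+$ is of type $E1$ or that it lands on the predicted $Y^+$. The genuine gap is precisely at the point you flag as ``the hard part'': you offer no workable device for proving that $-K_X=4H-E$ is nef and that only finitely many curves are $K_X$-trivial. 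Proposing to control the base locus of $|\I_C(4H)|$ directly on $\P^3$ for some unspecified model of $C$ (a residual curve, a projection) is not an argument but a restatement of the problem, and without resolving it the construction never gets off the ground.

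The paper fills this gap with a concrete idea that is absent from your plan: choose $C$ inside a smooth cubic surface $S\subset\P^3$, in the class $C\sim 7l-4e_1-3e_2-3e_3-2e_4-2e_5-2e_6$ (which has degree $5$ and genus $0$, and contains a nonsingular irreducible member since $C.L\ge 0$ for all $27$ lines $L$ and $C^2>0$). Then the strict transform satisfies $\widetilde{S}\sim 3H-E=-K_X-H$, so any curve $\Gamma$ with $-K_X.\Gamma\le 0$ has $\widetilde{S}.\Gamma<0$ and is therefore trapped inside $\widetilde{S}\cong S$. Writing $\Gamma\sim al-\sum b_ie_i$, the inequality $-K_X.\Gamma\le 0$ becomes $5a\le b_2+b_3+2b_4+2b_5+2b_6$, and the Cauchy--Schwarz inequality yields $a^2<\sum b_i^2$, i.e.\ $\Gamma^2<0$; hence $\Gamma$ is one of the $27$ lines, and an explicit check shows $e_1$ is the unique $K_X$-trivial curve while all other lines are $K_X$-positive. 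This reduction of nefness to a finite verification on the lines of a del Pezzo surface is the essential content of the proof. (Incidentally, your stated reason for abandoning the $K3$ route is inaccurate: by \cite{Kn02} a rational quintic does lie on a smooth quartic $K3$ with Picard rank two, so Knutsen's conditions do not fail; the paper's ``different method'' consists in replacing the $K3$ by a cubic surface, where the negative-curve geometry is completely explicit.)
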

\begin{proof}
Let $S \subset \P^3$ be a nonsingular cubic surface, the blow-up of $\P^2$ at six general points (no three on a line, no six on a conic).  Let the Picard group of $S$ be generated by $l, e_1, \ldots, e_6$, where $l$ is the pullback of a line in $\P^2$ and $e_1, \ldots, e_6$ are the exceptional divisors.  The intersection numbers between the generators of $\Pic(S)$ are $l^2 = 1, l.e_i = 0$ for any $i = 1, \ldots,6$, and $e_i.e_j = \delta_{ij}$, where $\delta_{ij}$ is the Kronecker delta. In $S$, consider the divisor $$C \sim 7l - 4e_1 - 3e_2-3e_3-2e_4-2e_5-2e_6.$$  Since for any of the 27 lines $L$ on $S$, $C.L \ge 0$ and $C^2 >0$.  Thus by the theory of cubic surfaces, the linear system $|C|$ contains an irreducible nonsingular member which we will also denote by $C$.  The degree of any effective divisor $D \sim al - \sum b_ie_i$ on $S$ as a curve in $\P^3$ is $3a - \sum b_i$, and the genus of $D$ is $\frac{1}{2}(a-1)(a-2) - \frac{1}{2}\sum (b_i^2 - b_i)$.  Thus the degree and genus of $C$ are 5 and 0 respectively in accordance with the given data. Let $X$ be the blowup of $C$ in $\P^3$.  Let $\widetilde{S}$ in $X$ be the strict transform of the cubic surface $S$.  Then $\widetilde{S} \in |3H-E| = |-K_X - H|$. Suppose $\Gamma \subset X$ is a curve such that $-K_X.\Gamma \le 0$.  Then $\widetilde{S}.\Gamma < 0$ since $\widetilde{S} \sim -K_X - H$ and thus $\Gamma$ is contained in $\widetilde{S}$.  Now $\widetilde{S} \cong S$, so write $\Gamma \sim al - \sum b_i e_i$. We now claim that $\Gamma$ must be a line in the cubic surface $\widetilde{S}$.  To show this, it suffices to show that $\Gamma^2 = a^2 - \sum b_i^2 < 0$, since the lines on a cubic surface are the only divisors with negative self-intersection.
Starting with $-K_X.\Gamma \le 0$, we rewrite this inequality as $(4H-E).\Gamma \le 0$ or equivalently $4H.\Gamma \le E.\Gamma$.  Viewing the intersection on the right in $S$, we have $4 \, \text{deg}_{\P^3} \Gamma \le C.\Gamma$.  This is equivalent to
$$4(al-\sum b_i e_i).(3l - \sum e_i) \le (7l - 4e_1 - 3e_2-3e_3-2e_4-2e_5-2e_6).(al - \sum b_ie_i) \\$$
which is equivalent to $$5a \le b_2 + b_3 + 2b_4 + 2b_5 + 2b_6.$$
If $\Gamma$ is $e_1$, we have $-K_X.\Gamma = 0$. If $\Gamma = e_i$ for $i = 2, \ldots, 6$ we get that $-K_X.\Gamma >0$.  So we can assume $\Gamma$ is not one of the $e_i$'s.  Since $\Gamma$ is effective, it follows that $5a \le 2\sum b_i$.  Squaring both sides gives $25a^2 \le 4(\sum b_i)^2$.  Using the Cauchy-Schwartz inequality, we have $$25a^2 \le 4(\sum b_i)^2 \le 4 \cdot 6 \sum b_i^2 < 25\sum b_i^2.$$Canceling the 25 yields the desired $$a^2 < \sum b_i^2$$
Since we now know that $\Gamma$ is one of the possible 27 lines on a cubic surface, a simple check shows that the only curve which is $-K_X$-trivial is the exceptional divisor $e_1$, with all other lines being $-K_X$-positive.  Since $e/r^3 =1$, it is expected that there is only 1 flopping curve. Therefore $X$ is a weak Fano threefold. Since this example is the only case on all tables with genus 0 and degree 5, this must be case 111 on our table.
\end{proof}

\begin{remark}
\label{e1e1quadrics}
The cases when $X$ is the blow up of a smooth curve on a smooth quadric $Y \subset \P^4$, i.e., case Nos. 44, 45, 46, 48, 70, 86, 88, 97, and 105 all exist.  In addition, the cases when $X$ is the blow up of a curve on the smooth intersection of two quadrics, i.e., case Nos. 30, 34, 37, 41, 64, and 84, all exist. These and others are the subject of the paper \cite{ACM11}.
\end{remark}

\section{E1-E* Case (*= 2,3,4,5)}
In this section, we classify cases of type $E1 - E2, E1 - E3/E4$ and $E1-E5$.  The diagram (\ref{Firstfig}) and all notations in the the previous section still apply.  We assume that the left side of the diagram $\phi:X \rightarrow Y$ is the $E1$ contraction and the right side $\phi^{+}:X^+ \rightarrow Y^+$ is the $E2,E3/E4$ or $E5$ contraction.  Since the a contraction of type $E3$ is numerically equivalent to a contraction of type $E4$, we will not distinguish between them in what follows.

\subsection{Equations and Bounds}
As before, the goal is to use the relations in (\ref{intwithKx}) to find Diophantine equations, find all solutions using a computer program, and then check their geometric realization.  The first observation to note is that regardless of what the contraction $\phi^+: X^+ \rightarrow Y^+$ is, we always have the following relation:
\begin{equation}
\label{KXEplus2}
K_X.(\widetilde{E^+})^2 = 2.
\end{equation}

Using the known values in each case for $E^+|_{E^+}$ in Theorem \ref{FirstMainThm}, we can easily see the above equality using adjunction as follows:
$$
\begin{array}{cl}
    K_X.(\widetilde{E^+})^2  & = K_{X^+}.(E^+)^2 \\
                                & = K_{X^+}|_{E^+}.E^+|_{E^+} \\
                                & = (K_{E^+} - E^+|_{E^+}).E^+|_{E^+} \\
                                & = 2.
\end{array}
$$

For each non $E1$ contraction, it is easy to show the following intersection numbers using similar reasoning as above (see Lemma 4.1.6 in (\cite{IP99})):
$$
(-K_{X^+})^2.E^+ =  \left\{ \begin{array}{cl}   4 & \text{   if } E2 \\
                                                2 & \text{   if } E3/E4 \\
                                                1 & \text{   if } E5 \end{array} \right.
$$

Now from the following equalities:
\begin{equation}
\label{E1Eints}
\begin{array}{c}
K_X.(\widetilde{E^+})^2 = 2; \\
K_X^2.\widetilde{E^+} = K_{X^+}.E^+; \\
K_X.E^2 = K_{X^+}.(\widetilde{E})^2; \\
K_X^2.E = K_{X^+}^2.\widetilde{E}.
\end{array}
\end{equation}
using (\ref{Eplusequation}), (\ref{Eequation}), and (\ref{wellknowns}), we have the following system of Diophantine equations:
\begin{equation}
\label{E1Eequations}
\begin{array}{c}
\alpha^2 (K_X)^3-2\alpha \beta r d + (2 - 2g)(-2\alpha \beta + \beta^2) = 2.                         \\
\alpha(-K_X)^3 + \beta (rd + 2 - 2g) = \left\{ \begin{array}{cl}   4 & \text{   if } E2 \\
                                            2 & \text{   if } E3/E4 \\
                                            1 & \text{   if } E5 \end{array} \right.             \\
2-2g = (\alpha^+)^2(K_X)^3 - 2\alpha^+ \beta^+ \cdot \left\{ \begin{array}{cl}   4 & \text{   if } E2 \\
                                            2 & \text{   if } E3/E4 \\
                                            1 & \text{   if } E5 \end{array} \right.             + 2 (\beta^+)^2. \\
rd+2-2g = \alpha^+(-K_X)^3 + \beta^+ \cdot \left\{ \begin{array}{cl}   4 & \text{   if } E2 \\
                                                                 2 & \text{   if } E3/E4 \\
                                                                 1 & \text{   if } E5 \end{array} \right.             \\

\end{array}
\end{equation}

To run a computer program, we still have to compute the necessary bounds on $\alpha^+$ and $\beta^+$ since Lemma \ref{betathm} only applies if $\phi^+: X^+ \rightarrow Y^+$ is an $E1$ or $E2$ contraction.

We first bound $\beta^+$.  From (\ref{gcdchecks}) it follows that $r\beta \in \Z$. Using (\ref{firstchecks}), this becomes $\frac{r}{\beta^+} \in \Z$, so $|\beta^+| \le r$.

Using the last Diophantine equation above and the bounds for $d$ and $g$ computed in the previous section, we can now bound $\alpha^+$.  Since $\sigma > 0$, we have
$$rd + 2 - 2g = 2rd - \sigma +4 -4g \le 2rd +4 \le 2 \cdot 4 \cdot 19 + 4 = 156$$
Therefore $$\sigma^+(-K_{X})^3 + \beta^+ \cdot \left\{ \begin{array}{cl}4 & \text{   if } E2 \\
                                                                 2 & \text{   if } E3/E4 \\
                                                                 1 & \text{   if } E5 \end{array} \right. \le 156$$
Rearranging the terms, and using the fact that $|\beta^+|\le r$, we have that $$\alpha^+(-K_X)^3 \le 156 + 16 = 172$$ and so using the lower bound on $(-K_X)^3$, we have now shown the following:
\begin{proposition}
\label{E2alphabounds}
For an E2, E3/E4 or E5 contraction, $0< \alpha^+ \le 86$ and $|\beta^+| \le r$.  Since $\beta^+ < 0$, $-r \le \beta^+ \le -1$
\end{proposition}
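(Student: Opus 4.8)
The plan is to bound $\beta^+$ and $\alpha^+$ separately: $\beta^+$ through the integrality conditions already collected, and $\alpha^+$ through the last equation of (\ref{E1Eequations}) together with the a priori bounds on $d$, $g$ and $(-K_X)^3$ established in the $E1$-$E1$ section. Since Lemma \ref{betathm} is unavailable here (it presumes $\phi^+$ is of type $E1$ or $E2$), the work is to recover just enough of its conclusion by hand.

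For $\beta^+$ I would first extract integrality from Proposition \ref{gcdchecks}: since $\alpha r \in \Z$ and $\alpha - \beta \in \Z$ there, we get $r\beta = \alpha r - r(\alpha - \beta) \in \Z$. Feeding this into the relation $\beta\beta^+ = 1$ from (\ref{firstchecks}) gives $r/\beta^+ = r\beta \in \Z$, so $\beta^+$ is a nonzero rational of the form $r/k$ with $k \in \Z\setminus\{0\}$, whence $|\beta^+| \le r$. The sign is negative: $\beta$ and $\beta^+$ share a sign since $\beta\beta^+ = 1 > 0$, and the strict transform $\widetilde{E}$ of the $\phi$-exceptional divisor across the flop carries a negative $E^+$-coefficient, exactly as in the $E1$-$E1$ computation behind Lemma \ref{betathm}. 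Combined with $|\beta^+| \le r$ this yields $-r \le \beta^+ \le -1$.

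For $\alpha^+$ the key input is an upper bound on $\sigma = rd + 2 - 2g$. Rewriting the identity $\sigma = 2rd - \sigma + 4 - 4g$ and discarding the nonpositive terms $-\sigma$ and $-4g$ (using $\sigma > 0$ and $g \ge 0$) gives $\sigma \le 2rd + 4 \le 2\cdot 4 \cdot 19 + 4 = 156$ via $r \le 4$ and $d \le 19$. The last line of (\ref{E1Eequations}) reads $\sigma = \alpha^+(-K_X)^3 + \beta^+ c$, where $c = (-K_{X^+})^2.E^+ \in \{1,2,4\}$, so $\alpha^+(-K_X)^3 = \sigma - \beta^+ c \le 156 + |\beta^+|\,c \le 156 + 4r \le 172$. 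Since $(-K_X)^3 \ge 2$, this gives $\alpha^+ \le 86$. Positivity is then immediate: in $\alpha^+(-K_X)^3 = \sigma - \beta^+ c$ both $\sigma$ and $-\beta^+ c$ are positive (as $\sigma > 0$, $\beta^+ < 0$ and $c > 0$), forcing $\alpha^+ > 0$.

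The main obstacle is the bookkeeping around $\beta^+$, not the estimate for $\alpha^+$. One must confirm that the integrality and sign information packaged in Proposition \ref{gcdchecks} and (\ref{firstchecks}) survives the passage to $\phi^+$ of type $E2$, $E3/E4$ or $E5$ — in particular that $\widetilde{E} \in \Pic(X^+)$ still forces $r\beta \in \Z$, and that $\beta^+ < 0$ — precisely because Lemma \ref{betathm} no longer supplies these directly. Once that is in hand, everything else is a direct substitution into the fourth equation of (\ref{E1Eequations}) using $r \le 4$, $d \le 19$, $(-K_X)^3 \ge 2$ and $\sigma > 0$.
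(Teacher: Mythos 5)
Your proof is correct and follows essentially the same route as the paper: $|\beta^+|\le r$ is extracted from Proposition \ref{gcdchecks} via $r\beta\in\Z$ and $\beta\beta^+=1$, and $\alpha^+\le 86$ comes from bounding $\sigma\le 2rd+4\le 156$ and substituting into the last equation of (\ref{E1Eequations}) with $|\beta^+|c\le 16$ and $(-K_X)^3\ge 2$. Your added justifications of $\beta^+<0$ and $\alpha^+>0$ (which the paper asserts without proof) are welcome refinements, not deviations.
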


We need one more proposition:
\begin{proposition}
\label{E2alphaIsInt}
If $\phi: X \rightarrow Y$ is an $E2, E3/E4$ or $E5$ type contraction, then $\alpha$ and $\beta$ as in (\ref{Eplusequation}) are integers.
\end{proposition}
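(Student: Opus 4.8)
The plan is to show that the pair $\{-K_X,E\}$ forms a $\Z$-basis of $\Pic(X)$. Once this is established, the class $\widetilde{E^+}$, being an honest (Cartier) divisor on the smooth threefold $X$, automatically has integral coefficients $\alpha,\beta$ in the expansion (\ref{Eplusequation}). So everything reduces to computing the index $[\Pic(X):\langle -K_X,E\rangle]$ and showing it equals $1$; this mirrors the lattice computation carried out in the proof of Lemma \ref{betathm}.

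First I would pin down the sublattice $\langle -K_X,E\rangle$ by pairing against the generator $l$ of the ray contracted by $\phi$. Reading the values $-K_X\cdot l=\mu$ and $E\cdot l$ off Theorem \ref{FirstMainThm}, one finds $(\mu,E\cdot l)=(2,-1)$ for $E2$, $(1,-1)$ for $E3/E4$, and $(1,-2)$ for $E5$. In every case $\gcd(\mu,E\cdot l)=1$, so the homomorphism $D\mapsto D\cdot l$ already maps $\langle -K_X,E\rangle$ onto $\Z$. Writing $K:=\ker(\,\cdot\,l)\subset\Pic(X)$, which is a rank-one saturated (hence primitive) sublattice generated by some $P$, the index therefore equals $[\,K:(\langle -K_X,E\rangle\cap K)\,]$. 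The intersection $\langle -K_X,E\rangle\cap K$ is generated by the primitive $l$-trivial integral combination of $-K_X$ and $E$, namely $-K_X+2E$ ($E2$), $-K_X+E$ ($E3/E4$), and $2(-K_X)+E$ ($E5$); by the discrepancy formula $-K_X=\phi^*(-K_Y)-aE$ (with $a=2,1,\tfrac12$) this combination is exactly $\phi^*(-K_Y)$, respectively $2\phi^*(-K_Y)$. Hence the index is precisely the integer $k$ with $\phi^*(-K_Y)=kP$ (resp. $2\phi^*(-K_Y)=kP$) --- that is, the Fano index of $Y$ measured against the primitive generator of $K$.

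For the $E2$ case this finishes cleanly. Here $Y$ is a smooth Fano threefold with $\rho(Y)=1$, so $\Pic(Y)=\Z H$ with $H$ primitive, $P=\phi^*H$, and $\phi^*(-K_Y)=rP$ where $r$ is the Fano index of $Y$. Thus the index is $r$, and it remains to force $r=1$: since $X$ has weak Fano index $1$ (assumption (vi)), $-K_X=rH-2E$ is primitive, so $r$ must be odd; and from $E^3=1$ one computes $(-K_X)^3=r^3H^3-8$, so the bound $(-K_X)^3\le 22$ together with $H^3\ge 1$ rules out $r=3$ (which would require $Y$ to be the quadric, giving $(-K_X)^3=46$). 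Hence $r=1$ and $\{-K_X,E\}$ is a basis.

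The main obstacle is the $E3/E4$ and $E5$ cases, where $Y$ is singular (a node, a cDV double point, or a non-Gorenstein quadruple point) and need not be $\Q$-factorial. There the identification of the primitive generator $P$ of $K$ is delicate: $P$ arises from the strict transform of a generator of the Weil class group $\Cl(Y)$ rather than from $\Pic(Y)$, so computing the divisibility of $\phi^*(-K_Y)$ --- equivalently the relevant index --- requires controlling $\Cl(Y)$ and the precise way Weil divisors through the singular point meet the exceptional surface $E$. I expect this to be the crux: the degree bound $(-K_X)^3\le 22$ by itself leaves open an a priori possibility of index $2$, so one must invoke the classification of these singular Fano targets, together with primitivity of $-K_X$, to exclude it and conclude that $\{-K_X,E\}$ generates $\Pic(X)$ in every case.
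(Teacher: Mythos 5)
Your E2 case is complete and correct: the lattice computation $[\Pic(X):\langle -K_X,E\rangle]=r$, combined with primitivity of $-K_X$ (forcing $r$ odd) and the degree bound ruling out $r=3$, does show $\{-K_X,E\}$ is a $\Z$-basis, and this is a legitimate alternative to the paper's E2 argument (the paper instead rules out $r_Y\ge 2$ directly --- $r_Y=2$ would give $r_X=2$, and $r_Y=3,4$ would make the blow-up Fano --- and then pairs $\widetilde{E^+}$ with the pullback of a line on $Y$ to get $\alpha\in\Z$, with $\beta\in\Z$ following from primitivity of $E$). The genuine gap is that for E3/E4 and E5 you stop at describing the difficulty: you never identify the primitive generator of $\ker(\,\cdot\,l)$, never compute the index, and explicitly defer to an unspecified ``classification of these singular Fano targets.'' That leaves two of the three cases of the proposition unproved. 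Moreover, the route you sketch is off target: $Y$ \emph{is} automatically $\Q$-factorial (it is the target of a Mori divisorial contraction from the smooth, hence $\Q$-factorial, $X$), so that is not the obstruction; the real issue is that $\Cl(Y)\neq\Pic(Y)$ in general (in the E5 case $-K_Y$ is not even Cartier), and no bound on $(-K_X)^3$ will by itself control the saturation of $\langle -K_X,E\rangle$ --- your plan genuinely requires structural input about $Y$ that you do not supply.

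The missing idea --- and the paper's actual proof --- avoids $Y$ entirely and works on the flop side. Flopping curves $\Gamma$ exist by assumptions (iii) and (v), and the paper shows $E\cdot\Gamma=1$ for every one of them: the linear system defining $\psi|_E$ is the image of $H^0(X,\O_X(-K_X))\to H^0(X,\O_X(-K_X)|_E)$; by Theorem \ref{FirstMainThm}, $\O_X(-K_X)|_E$ is $\O_Q(1)$ (E3/E4) or $\O_{\P^2}(1)$ (E5), and since no proper subsystem of these complete linear systems gives a finite birational morphism, the restriction map is surjective and $\psi|_E$ is a closed embedding; hence every flopping curve meets $E$ transversally in a single point. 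Then $\beta=\widetilde{E^+}\cdot\Gamma\in\Z$ because $-K_X\cdot\Gamma=0$, and $\alpha\in\Z$ because $\alpha(-K_X)=\widetilde{E^+}-\beta E$ is Cartier and $r_X=1$ (assumption (vi)). Incidentally, this also proves your stronger claim that $\{-K_X,E\}$ is a basis in the E3/E4/E5 cases, since the intersection matrix of $\{-K_X,E\}$ against $\{l,\Gamma\}$ is unimodular --- but the proof of that unimodularity is exactly the flopping-curve computation you are missing, not a class-group analysis of $Y$.
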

\begin{proof}
\textit{Case 1:} The contraction $\phi$\textit{ is E2}: If the Fano index of $Y$, $r_{Y}$, is two, then the Fano index of $X$, would be two, contradicting our assumption that $r_X = 1$.  By classification of smooth Fano threefolds with Picard number one, if $r_{Y} = 3$ or 4, then $Y$ is either the smooth quadric $Q \subset \P^4$ or $\P^3$ respectively.  In both of these situations, the blow up of a point would be a Fano variety.  Thus we only need to consider the case when $Y$ has Fano index one. Let $l$ be a line on $Y$ which we know to exist by the classic theorem of Shokurov.

Then $-K_{X}.\phi^*(l) = -K_{Y}.l =1$ and $E.\phi^*(l) = 0$.  Therefore $\Z \ni \widetilde{E^+}.\phi^*(l) = (\alpha(-K_{X}) + \beta E).\phi^*(l) = \alpha$.  Therefore $\widetilde{E^+} + \alpha K_{X} = \beta E$ is Cartier, so $\beta \in \Z$.\\

\textit{Case 2:} $\phi$\textit{ is E3/E4 or E5}: Let $\psi$ be the flopping contraction.  Then $\psi$ restricted to $E$ is a finite birational morphism.  The linear system corresponding to the morphism $\psi|_{E}$ is the subsystem of $|-K_{X}|_{E}|$ corresponding to the image of $H^0(X,-K_{X}) \rightarrow H^0(X,-K_{X}|_{E})$.  By Mori's theorem classifying extremal rays (\ref{FirstMainThm}), we know that in the $E3/E4$ case, $\O_{X}(-K_{X})|_{E} \cong \O_{Q}(1)$ on a quadric $Q \subset \P^3$ and for the $E5$ case, $\O_{X}(-K_{X})|_{E} \cong \O_{\P^2}(1)$ on $\P^2$.  No proper subsystem of either of the complete linear systems associated to these sheaves gives a finite birational morphism, so $H^0(X,-K_{X}) \rightarrow H^0(X,-K_{X}|_{E})$ is surjective.  Therefore the linear system of $\psi|_{E}$ is $|-K_{X}|_{E}|.$ Thus $\psi|_{E}$ is an embedding since $-K_{X}|_{E} = \O(1)$ is very ample.  This implies the intersection of $E$ with any flopping curve $\Gamma$ must be transversal at a single point, ie $E.\Gamma =1$.  Then $\Z \ni \widetilde{E^+}.\Gamma = (\alpha(-K_{X}) + \beta E).\Gamma = \beta$.  So $\widetilde{E^+} - \beta E = \alpha(-K_{X})$ is Cartier and since the index of $X$ is 1 by assumption, $\alpha \in \Z$.
\end{proof}

\subsection{Existence of Cases}
With the equations and bounds above, we now can write a computer program to numerically classify links (\ref{Firstfig}) with $\phi$ an $E1$-contraction and $\phi^+$ of type $E2, E3/E4$ or $E5$.  In our program we loop through the values of $-K_X^3, r,d,g,\alpha^+$ and $\beta^+$.  The values for all missing variables are then completely determined by the equation in the first section.  The Diophantine equations now used are the ones in (\ref{E1Eequations}).  All the bounds of $(-K_X)^3, r, d,$ and $g$ still apply in the $E1$ case, and we use (\ref{E2alphabounds}) and (\ref{E2alphaIsInt}) to loop through $\alpha^+$ and $\beta^+$, which then completely determine both $\alpha$ and $\beta$.

To exhibit some of these cases geometrically, we use nonsingular del Pezzo surfaces $S_k$ of degree $k = 3,4,5$.  These surfaces are blowups of $\P^2$ at $9-k$ general points $P_1, \ldots, P_{9-k}$.  Denote by $l$ the class of the pullback of a line on $\P^2$ to $S_k$.  denote by $e_i$ the exceptional curves on $S_k$ of the blowup $S_k \rightarrow \P^2$.  Then $l$ and the $e_i$ generate $\Pic(S)$ and there is a nonsingular curve linearly equivalent to a divisor $D$ if and only if $D.L \ge 0$ for every line $L$ on $S_k$ and $D^2 = 0$.  The lines on $S_k$ are the $e_i$, the strict transforms $f_{i,j} \sim l - e_i - e_j$ of lines in $\P^2$ passing through two of the blown up points $P_i$ and $P_j$, and the strict transforms of conics in $\P^2$ passing through five of the $P-I$.  In the case $S_4$, the strict transform of the unique conic through $P_1, \ldots, P_5$ will be denoted by $g \sim 2l - \sum_{i = 1}^5 e_i$.  On $S_3, g_j \sim 2l - \sum_{i \ne j} e_i$ will denote the strict transform of the conic passing through all $P_i$ with $i \ne j$.

\subsubsection{E1-E2}
 Note that $r^+$, the index of the base of the $E2$-contraction, must be $1$.  If $r^+ = 2$, then since $-K_{X^+} = -K_{Y^+} +2E^+$ in this case, the index of $X$ would be 2, which we already ruled out (assumption vi).  If $Y^+$ had index 3 or 4, then $X^+$ would be Fano.

We have to be careful when applying our check on the preservation of Hodge numbers since only for the smooth $E2$ type contraction can we check that $h^{1,2}$ is preserved under a flop.  Since blowing up a nonsingular point does not change the Hodge number $h^{1,2}$, we now have to check that
$$
h^{1,2}(Y) + g = h^{1,2}(Y^+)
$$
After running the program and performing the aforementioned checks, there are only three solutions in the $E1-E2$ case.  These can be found in (\ref{E1E2table}) in the Tables section of this paper.  All three of these solutions are well known examples of smooth weak Fano threefolds with Picard number 2 and their existence is described in (\cite{Tak89}).  While their existence may have been previously known, the new result here is that these are the only three examples of contractions of type $E1-E2$.

\subsubsection{E1-E3/E4}
Numerically, the cases $E1-E3$ and $E1-E4$ are equivalent and so we will treat them as such.  In the examples below, the link found is of type $E1-E3$. We have not found any links of type $E1-E4$.  Since the Fano threefold $Y^+$ now has a singular point, we can not use any of the Hodge number checks as before.  Running a computer program gives seven possible numerical cases which is Table 5.3.  We proceed case by case:

\textit{Cases 1 and 2:} \label{e1e3no12}The same exact argument as in Proposition \ref{shtest1} can be applied to eliminate these cases.

\textit{Case 3}:
\label{e1e3no3}
To eliminate this case, consider the short exact sequence:
$$0 \rightarrow \O_X(-K_X - E) \rightarrow \O_X(-K_X) \rightarrow \O_X(-K_X|_E ) \rightarrow 0$$
Then since $\alpha = -\beta = 1,\, \widetilde{E^+} = -K_X - E$.  So the long exact sequence of cohomology gives:
$$0 \rightarrow H^0(X,\O_X(-K_X - E)) \rightarrow H^0(X,\O_X(-K_X)) \rightarrow H^0(X,\O_X(-K_X|_E)) \rightarrow \ldots$$
Since we are in the $E3/E4$ case, $\O_X(-K_X|_E) \cong \O_E(1)$ thus $h^0(X,\O_X(-K_X|_E)) = 4$.  Since $h^0(X,\O_X(-K_X)) = \frac{-K_X^3}{2} + 3 = 6$, by exactness we must have $$h^0(X,\O_X(-K_X - E)) = h^0(X,\O_X(\widetilde{E^+})) \ge 2$$ which is a contradiction. \\

\textit{Case 4}:
\label{e1e3no4}
By exercise 5.4.8 of \cite{Ha77}, on any $S_3 \subset \P^3$ there is a nonsingular curve $C$ linearly equivalent to $10l - 4e_1 - 4e_2 - 4e_3 - 4e_4 - 3e_5 - 3e_6$. The curve $C$ has genus 6 and degree 8. Let $X$ be the blowup of $C \subset Y$ and let $\widetilde{S}$ denote the strict transform of $S$.  Suppose a curve $\Gamma \subset X$ satisfies $-K_X.\Gamma \le 0$. Since $-K_X = \widetilde{S} + H$, we see that $\widetilde{S}.\Gamma < 0$, so $\Gamma \subset \widetilde{S} \cong S$. Writing $-K_X = 4H + E$ and $\Gamma \sim al - \sum b_ie_i$ in $\widetilde{S} \cong S$, we obtain $4 \cdot \deg_{\P^3}(\Gamma) \le \Gamma.C$, which simplifies to $$2a \le b_5 + b_6$$ The only curves which satisfy this inequality are $e_1, e_2,e_3,e_4$ and $f_{5,6}$, where $f_{5,6}$ is the strict transform of the line through $P_5$ and $P_6$. For each of these curves, equality is attained, so $X$ is a weak Fano threefold with five flopping curves $e_1,e_2,e_3,e_4$ and $f_{5,6}$. Using $$0 \rightarrow \mathcal{N}_{\Gamma/\widetilde{S}} \rightarrow \mathcal{N}_{\Gamma/X} \rightarrow \mathcal{N}_{\widetilde{S}/X}|_{\Gamma} \rightarrow 0$$we see that the normal bundle of each flopping curve is $\O(1) \oplus \O(1)$, so the flop $X \stackrel{\chi}{\dashrightarrow} X^+$ is an Atiyah flop. $\chi(\widetilde{S})$ is isomorphic to $\P^1 \times \P^1$ since it is the result of contracting the flopping curves on $\widetilde{S}$. The sheaf associated to $(\chi(\widetilde{S}))^2$, or equivalently the normal bundle of $\chi(\widetilde{S})$, is then $\O_{\P^1 \times \P^1}(a,b)$ for some integers $a$ and $b$. Choosing a fiber $f$ in $\chi(\widetilde{S})$ not passing though any of the points of intersection of the flopping curves with $\chi(\widetilde{S})$, we see that $a = f.\chi(\widetilde{S})$.  Since the flop $\chi$ restricted to $\widetilde{S}$ is just the blow-down of the five flopping curves,  $f.\chi(\widetilde{S})$ is preserved under the flop so to compute $a$ we will instead compute $\widetilde{f}.\widetilde{S}$.  Using the fact that $\widetilde{S} = 3H - E$ and $\widetilde{f} \sim l - e_5$, we can compute the intersection inside of the cubic surface $\widetilde{S}$.  Since $H|_{\widetilde{S}} \sim 3 l - \sum e_i$ and $E|_{\widetilde{S}} = C \sim 10l - 4e_1 - 4e_2 - 4e_3 - 4e_4 - 3e_5 - 3e_6$, we obtain $a = f.\chi(\widetilde{S}) = \widetilde{f}.\widetilde{S} = -1$.  A similar calculation shows that $b = -1$ and thus the normal bundle of $\chi(\widetilde{S})$ is $\O_{\P^1 \times \P^1} (-1,-1)$, so $X^+$ has an $E3$-contraction with exceptional divisor $\chi(\widetilde{S})$. \\

\textit{Case 5}:
\label{e1e3no5}
Since $-K_Y^3 = 54$, $Y \cong Q$, a smooth quadric threefold in $\P^4$.  Let $S \in |2H|\subset Y$ be a smooth surface.  $S$ is the complete intersection of two quadrics in $\P^4$ so is del Pezzo of degree $K_S^2 = 4$. By exercise 5.4.8 of \cite{Ha77}, there is a nonsingular curve $C$ with degree 8 and genus 4 linearly equivalent to $5l - 2e_1 - 2e_2 - e_3 - e_4 - e_5$ on $S$.  We proceed using the techniques and notation of case 4 above. We obtain $$2a \le b_4 + b_5$$ so $X$ is a weak Fano threefold with four flopping curves $e_1,e_2,e_3$ and $f_{4,5}$. The flop $X \stackrel{\chi}{\dashrightarrow} X^+$ is an Atiyah flop and $X^+$ has an $E3$-contraction with exceptional divisor $\chi(\widetilde{S})$. \\

\textit{Case 6}:
\label{e1e3no6}
Since $-K_Y^3 = 32$, $Y \cong Q_1 \cap Q_2 \subset \P^5$, the smooth intersection of two quadrics.  Let $S \in |H|\subset Y$ be a smooth surface.  $S$ is the complete intersection of two quadrics in $\P^4$ so is del Pezzo of degree $K_S^2 = 4$. By exercise 5.4.8 of \cite{Ha77}, there is a nonsingular curve $C$ with degree 4 and genus 0 linearly equivalent to $4l - 2e_1 - 2e_2 - 2e_3 - e_4 - e_5$ on $S$. We proceed using the techniques and notation of case 4 above. We obtain $$2a \le b_4 + b_5$$ so $X$ is a weak Fano threefold with four flopping curves $e_1,e_2,e_3$ and $f_{4,5}$. The flop $X \stackrel{\chi}{\dashrightarrow} X^+$ is an Atiyah flop and $X^+$ has an $E3$-contraction with exceptional divisor $\chi(\widetilde{S})$. \\

\textit{Case 7}:
\label{e1e3no7}
Since $-K_Y^3 = 40$, $Y \subset \P^6$ is a section of the Grassmannian Gr(2,5) $\subset \P^9$ by a subspace of codimension 3.  Let $S \in |H|\subset Y$ be a smooth surface.  $S$ is del Pezzo of degree $K_S^2 = 5$.  By exercise 5.4.8 of \cite{Ha77}, there is a nonsingular curve $C$ with degree 6 and genus 1 linearly equivalent to $4l - 2e_1 - 2e_2 - e_3 - e_4$ on $S$. We proceed using the techniques and notation of case 4 above. We obtain $$2a \le b_3 + b_4$$ so $X$ is a weak Fano threefold with three flopping curves $e_1,e_2$ and $f_{3,4}$. The flop $X \stackrel{\chi}{\dashrightarrow} X^+$ is an Atiyah flop and $X^+$ has an $E3$-contraction with exceptional divisor $\chi(\widetilde{S})$. \\

\subsubsection{E1-E5}
For the $E1-E5$ case, the table for all the numerical possibilities are listed on Table (\ref{E1E5table}).  We proceed case by case as before:\\

\textit{Case 1}:
\label{e1e5no1}
On $X$, $|-K_X|$ is base point free if and only if the scheme-theoretic base locus of $|-K_Y - C| = |H - C|$ on $Y$ is exactly the curve $C$. But $C$ is a plane elliptic curve and $Y$ is an intersection of quadrics, so this is impossible.

\textit{Case 2}:
\label{e1e5no2}
If this link exists, then the pushdown to $Y$ of a general element of $|-K_X|$ is a smooth $K3$ surface $S \in |-K_Y|$ containing $C$.  Such an $S$ is a quadric section of $Y \cong V_5 \subset \P^9$ and is therefore an intersection of quadrics.  However, by Theorm 1.1 of \cite{Kn02}, a smooth $K3$ surface of degree 10 in $\P^6$ containing a curve of degree 13 and genus 9 cannot be an intersection of quadrics.

\textit{Case 3}:
By exercise 5.4.8 of \cite{Ha77}, on a nonsingular cubic surface $S \subset Y \cong \P^3$, there is a nonsingular curve $C$ in the class $7l - 2e_1 - 2e_2 - 2e_3 - 2e_4 - 2e_5 - 2e_6$ with degree 9 and genus 9. Let $X$ be the blowup of $C \subset Y$ and let $\widetilde{S}$ denote the strict transform of $S$.  Suppose $\Gamma \subset X$ satisfies $-K_X.\Gamma \le 0$. As in E1-E3/E4 Case 4, we see that $\Gamma \subset \widetilde{S} \cong S$. Writing $-K_X = 4H + E$ and $\Gamma \sim al - \sum b_ie_i$ in $\widetilde{S} \cong S$, we obtain $4 \cdot \deg_{\P^3}(\Gamma) \le \Gamma.C$, which simplifies to $$5a \le 2\sum b_i$$ The only curves which satisfy this inequality are $g_1, \ldots,g_6$ which are the strict transform of conics in $\P^2$ passing through 5 of $P_1, \ldots, P_6$. For each of these curves, equality is attained, so $X$ is a weak Fano threefold with six flopping curves $g_1, \ldots,g_6$. Using $$0 \rightarrow \mathcal{N}_{\Gamma/\widetilde{S}} \rightarrow \mathcal{N}_{\Gamma/X} \rightarrow \mathcal{N}_{\widetilde{S}/X}|_{\Gamma} \rightarrow 0$$we see that the normal bundle of each flopping curve is $\O(1) \oplus \O(1)$, so the flop $X \stackrel{\chi}{\dashrightarrow} X^+$ is an Atiyah flop. $\chi(\widetilde{S})$ is isomorphic to $\P^2$ since it is the result of contracting the flopping curves on $\widetilde{S}$. The normal bundle of $\chi(\widetilde{S})$ is $\O_{\P^2}(a)$ for some integer $a$. Choosing a line $f$ in $\chi(\widetilde{S}) \cong \P^2$ not passing though any of the points of intersection of the flopping curves with $\chi(\widetilde{S})$, we see that $a = f.\chi(\widetilde{S})= \widetilde{f}.\widetilde{S}.$  Using the fact that $\widetilde{S} = 3H - E$ and $\widetilde{f} \sim 5l - 2 \sum e_i$, we can compute the intersection inside of the cubic surface $\widetilde{S}$.  Since $H|_{\widetilde{S}} \sim 3 l - \sum e_i$ and $E|_{\widetilde{S}} = C \sim 7l - 2 \sum e_i$, we obtain $a = f.\chi(\widetilde{S}) = \widetilde{f}.\widetilde{S} = -2$. The normal bundle of $\chi(\widetilde{S})$ is $\O_{\P^2}(-2)$, so $X^+$ has an $E5$-contraction with exceptional divisor $\chi(\widetilde{S})$. \\

\textit{Case 4}:
Since $-K_Y^3 = 24$, $Y \subset \P^4$ is a smooth cubic.  Let $S \in |H|\subset Y$ be a smooth surface.  $S$ is del Pezzo of degree $K_S^2 = 3$. By exercise 5.4.8 of \cite{Ha77}, on $S$ there is a nonsingular curve $C$ in the class $5l - 2e_1 - 2e_2 - 2e_3 - 2e_4 - 2e_5 - 2e_6$ with degree 3 and genus 0. We proceed using the techniques and notation of case 3 above. We obtain $$a \le 0$$ so $X$ is a weak Fano threefold with six flopping curves $e_1,\ldots,e_6$. The flop $X \stackrel{\chi}{\dashrightarrow} X^+$ is an Atiyah flop and $X^+$ has an $E5$-contraction with exceptional divisor $\chi(\widetilde{S})$. \\

\textit{Case 5}:
Since $-K_Y^3 = 54$, $Y \cong Q$, a smooth quadric threefold in $\P^4$.  Let $S \in |2H|\subset Y$ be a smooth surface.  $S$ is the complete intersection of two quadrics in $\P^4$ so is del Pezzo of degree $K_S^2 = 4$. By exercise 5.4.8 of \cite{Ha77}, on $S$ there is a nonsingular curve $C$ in the class $7l - 3e_1 - 3e_2 - 2e_3 - 2e_4 - 2e_5$ with degree $9$ and genus $6$.  We proceed using the techniques and notation of case 3 above. We obtain $$2a \le b_3+b_4 + b_5$$ so $X$ is a weak Fano threefold with five flopping curves $e_1,e_2,f_{3,4},f_{3,5}$ and $f_{4,5}$, where $f_{i,j}$ denotes the pullback of a line through the points $P_i$ and $P_j$. The flop $X \stackrel{\chi}{\dashrightarrow} X^+$ is an Atiyah flop and $X^+$ has an $E5$-contraction with exceptional divisor $\chi(\widetilde{S})$. \\

\textit{Case 6}:
Since $-K_Y^3 = 32$, $Y \cong Q_1 \cap Q_2 \subset \P^5$, the smooth intersection of two quadrics.  Let $S \in |H|\subset Y$ be a smooth surface.  $S$ is the complete intersection of two quadrics in $\P^4$ so is del Pezzo of degree $K_S^2 = 4$. By exercise 5.4.8 of \cite{Ha77}, on $S$ there is a nonsingular curve $C$ in the class $5l - 2e_1 - 2e_2 - 2e_3 - 2e_4 - 2e_5$ with degree $5$ and genus $1$.  We proceed using the techniques and notation of case 3 above. We obtain $$a \le 0$$ so $X$ is a weak Fano threefold with five flopping curves $e_1,\ldots e_5$. The flop $X \stackrel{\chi}{\dashrightarrow} X^+$ is an Atiyah flop and $X^+$ has an $E5$-contraction with exceptional divisor $\chi(\widetilde{S})$. \\

\textit{Case 7}:
Since $-K_Y^3 = 40$, $Y \subset \P^6$ is a section of the Grassmannian Gr(2,5) $\subset \P^9$ by a subspace of codimension 3.  Let $S \in |H|\subset Y$ be a smooth surface.  $S$ is del Pezzo of degree $K_S^2 = 5$.  By exercise 5.4.8 of \cite{Ha77}, on $S$ there is a nonsingular curve $C$ in the class $5l - 2e_1 - 2e_2 - 2e_3 - 2e_4$ with degree $7$ and genus $2$.  We proceed using the techniques and notation of case 3 above. We obtain $$a \le 0$$ so $X$ is a weak Fano threefold with four flopping curves $e_1, \ldots,e_4$. The flop $X \stackrel{\chi}{\dashrightarrow} X^+$ is an Atiyah flop and $X^+$ has an $E5$-contraction with exceptional divisor $\chi(\widetilde{S})$. \\

\section{Non E1-E* Cases  (*=2,3,4,5)}
In this section we assume both $\phi: X \rightarrow Y$ and $\phi^+: X^+ \rightarrow Y^+$ are both either $E2, E3/E4$ or $E5$ contractions.  The cases $E3$ and $E4$ are numerically equivalent, and thus we do not distinguish between them in what follows.
We start the classification of these remaining divisorial type contractions with the following Lemma.  By Proposition \ref{E2alphaIsInt}, we know that $\alpha, \alpha^+,\beta,$ and $\beta^+$ are integers.  We immediately obtain:

\begin{lemma}
\label{alphabetaE2E5}
If $X \stackrel{\phi}{\longrightarrow} Y$ and $X^+ \stackrel{\phi^+}{\longrightarrow} Y^+$ are both contractions of type either $E2, E3/E4$ or $E5$,  then $\alpha = \alpha^+$ and $\beta = \beta^+ = -1$.
\end{lemma}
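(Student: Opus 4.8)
The plan is to combine the integrality of the coefficients with two intersection-number identities and then extract the signs purely from positivity. Since both $\phi$ and $\phi^+$ are of type $E2$, $E3/E4$, or $E5$, Proposition \ref{E2alphaIsInt} tells us that $\alpha,\beta,\alpha^+,\beta^+$ are all integers. The relation $\beta\beta^+ = 1$ from (\ref{firstchecks}) then forces $\beta = \beta^+ \in \{+1,-1\}$ immediately, so the real content of the lemma is to exclude $\beta = +1$ and to read off $\alpha = \alpha^+$. Once $\beta = -1$ is established, the identity $\alpha + \beta\alpha^+ = 0$ from (\ref{firstchecks}) gives $\alpha = \alpha^+$ for free.

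The first step I would carry out is to record the relevant intersection numbers computed entirely inside $X$. By the adjunction computation of (\ref{KXEplus2}) we have $K_X.(\widetilde{E^+})^2 = 2$, and since $\phi$ is itself an $E$-type contraction the same adjunction argument applied to $E$ gives $K_X.E^2 = 2$. Writing $V := (-K_X)^3 > 0$ and $a := K_X^2.E$ (which is $4$, $2$, or $1$ according to the type of $\phi$, hence $a>0$), and setting $a^+ := K_{X^+}^2.E^+ > 0$, these are the only ingredients needed; in particular I do not need the precise values, only positivity of $V$, $a$, and $a^+$.

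Next I would expand the two identities using $\widetilde{E^+} = \alpha(-K_X) + \beta E$. Substituting the numbers above into $K_X.(\widetilde{E^+})^2 = 2$ and then setting $\beta^2 = 1$, the constant terms cancel, and since $\alpha \neq 0$ one is left with $\alpha V = -2\beta a$. For the second identity I would invoke Lemma \ref{intlemma}(1) with the divisors $K_X$ and $\widetilde{E^+}$: because the flop is small, the strict transform of $K_X$ is $K_{X^+}$ and that of $\widetilde{E^+}$ is $E^+$, so $K_X^2.\widetilde{E^+} = K_{X^+}^2.E^+ = a^+$. Expanding the left side in terms of $\alpha,\beta$ yields $\alpha V + \beta a = a^+$. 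Substituting the first relation into the second gives $-\beta a = a^+$; since $a>0$ and $a^+>0$ this forces $\beta < 0$, hence $\beta = \beta^+ = -1$ (and, as a byproduct, $a = a^+$, so the two contractions have the same type). The lemma follows.

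The computations themselves are routine, so the step I would treat most carefully — the crux of the argument — is the correct invocation of Lemma \ref{intlemma}(1) to obtain $K_X^2.\widetilde{E^+} = K_{X^+}^2.E^+$. It is precisely this \emph{mixed} $K$-intersection, landing on the strictly positive quantity $a^+ = K_{X^+}^2.E^+$ on the $X^+$ side, that breaks the apparent sign symmetry between $\beta = +1$ and $\beta = -1$: the identity $K_X.(\widetilde{E^+})^2 = 2$ alone determines only $\alpha V = -2\beta a$ and is insensitive to the sign of $\beta$, so without the positivity of $a^+$ the spurious solution $\beta = +1$ cannot be ruled out.
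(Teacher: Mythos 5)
Your proof is correct, but it reaches the sign of $\beta$ by a genuinely different route than the paper. The paper's own proof is a one-liner: it takes from Proposition \ref{E2alphabounds} that $\beta,\beta^+$ are \emph{negative}, so integrality (Proposition \ref{E2alphaIsInt}) plus $\beta\beta^+=1$ from (\ref{firstchecks}) immediately forces $\beta=\beta^+=-1$, and then $\alpha+\beta\alpha^+=0$ gives $\alpha=\alpha^+$. You instead use integrality only to reduce to $\beta=\beta^+=\pm 1$, and rule out $+1$ by combining the adjunction identity $K_X.(\widetilde{E^+})^2=2$ of (\ref{KXEplus2}) with the mixed identity $K_X^2.\widetilde{E^+}=K_{X^+}^2.E^+$ coming from Lemma \ref{intlemma}(1), using only the positivity of $(-K_X)^3$, $K_X^2.E$, and $K_{X^+}^2.E^+$; your expansions and the resulting relations $\alpha V=-2\beta a$ and $-\beta a=a^+$ check out. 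The difference matters: in this paper the inequality $\beta^+<0$ appears only as an unproved assertion inside Proposition \ref{E2alphabounds} (it is a standard fact about how flops reverse intersection signs on flopping curves, in the spirit of \cite{JPR07}), so your argument has the merit of being self-contained within the numerical identities already established. The cost is the two intersection computations, but these are exactly the computations the paper performs immediately \emph{after} the lemma; indeed, once $\beta=-1$ your relation $-\beta\,K_X^2.E=K_{X^+}^2.E^+$ reads $K_X^2.E=K_{X^+}^2.E^+$, so your proof delivers the conclusion of Theorem \ref{isabellaebonyemmachloespunkers} (both contractions have the same type) as a byproduct rather than as a separate step. Your closing remark is also on target: the identity (\ref{KXEplus2}) alone is invariant under $(\alpha,\beta)\mapsto(-\alpha,-\beta)$, so some strictly positive quantity attached to the $X^+$ side is needed to break the symmetry, and $K_{X^+}^2.E^+>0$ is precisely that quantity.
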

\begin{proof}
Since $\beta$ and $\beta^+$ are both negative integers and $\beta\beta^+ = 1$ by (\ref{firstchecks}), we must have $\beta = \beta^+ = -1$. Since $\alpha + \beta\alpha^+ = 0$, we get $\alpha = \alpha^+$.
\end{proof}

We now proceed to show which combinations of symmetric $E2-E5$ contractions can exist. Note first that by Theorem \ref{FirstMainThm}, a simple calculation shows that $K_X.E^2 = 2$ for any contraction of type $E2, E3/E4$, or $E5$. Then
$$
\begin{array}{cl}
    2   & = K_{X^+}.{E^+}^2 \\
        & = K_X.\widetilde{E^+}^2 \\
        & = K_X.(-\alpha K_X - E)^2 \\
        & = \alpha^2 K_X^3 + 2\alpha K_X^2.E + K_X.E^2 \\
        & = \alpha(\alpha K_X^3 + 2 K_X^2.E)+ 2.
\end{array}
$$

Since $\alpha \neq 0$ by (\ref{Eplusequation}), we obtain:

$$ -K_X^3 = \frac{2K_X^2.E}{\alpha}.$$

By symmetry, this formula also holds for $\phi^+$. That is,

$$ -K_{X^+}^3 = \frac{2K_{X^+}^2.E^+}{\alpha^+}.$$

Recalling that $-K_X^3 = -K_{X^+}^3$ and $\alpha = \alpha^+$, we have $K_X^2.E = K_{X^+}^2.E^+$. Also, $K_X^2.E = 4,2$ or $1$ for contractions of type $E2,E3/E4$ and $E5$ respectively.  Using Theorem \ref{FirstMainThm}, this proves the following:

\begin{theorem}
\label{isabellaebonyemmachloespunkers}
If $X \stackrel{\phi}{\rightarrow} Y$ is of type $E2-E5$, then $X^+ \stackrel{\phi^+}{\rightarrow} Y^+$ is of the same type. The possible values of $(-K_X^3, \alpha)$ are found in Tables \ref{E2E2table}, \ref{E3E3table}, and \ref{E5E5table}.
\end{theorem}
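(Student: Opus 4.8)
The plan is to reduce the entire statement to a single scalar identity relating $-K_X^3$, the integer $\alpha$, and the intersection number $K_X^2.E$, and then to read off the finitely many admissible pairs. The starting point is Lemma \ref{alphabetaE2E5}: since $\phi$ and $\phi^+$ are both of type $E2$, $E3/E4$, or $E5$, one has $\beta = \beta^+ = -1$ and $\alpha = \alpha^+$, so that (\ref{Eplusequation}) simplifies to $\widetilde{E^+} = -\alpha K_X - E$.

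First I would record the per-type intersection data coming from Mori's classification (Theorem \ref{FirstMainThm}): for each of the three contraction types the explicit description of $E$ and of $\O_E(E)$ gives, via adjunction, $K_X.E^2 = 2$ in every case, while $(-K_X)^2.E = K_X^2.E$ equals $4$, $2$, or $1$ according to whether $\phi$ is of type $E2$, $E3/E4$, or $E5$. Next, using that intersection with $K_X$ is preserved across the flop (Lemma \ref{intlemma}) together with $\widetilde{E^+} = -\alpha K_X - E$, I would compute
\[
2 = K_{X^+}.(E^+)^2 = K_X.\widetilde{E^+}^2 = K_X.(-\alpha K_X - E)^2 = \alpha\bigl(\alpha K_X^3 + 2K_X^2.E\bigr) + 2,
\]
where the last equality uses $K_X.E^2 = 2$. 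Since $\alpha \neq 0$ by (\ref{Eplusequation}), this forces $\alpha K_X^3 + 2K_X^2.E = 0$, i.e. $-K_X^3 = 2K_X^2.E/\alpha$; running the identical computation on the $X^+$ side gives $-K_{X^+}^3 = 2K_{X^+}^2.E^+/\alpha^+$.

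The heart of the argument is to combine these two formulas. Because $-K_X^3 = -K_{X^+}^3$ (another flop invariant) and $\alpha = \alpha^+$, they force $K_X^2.E = K_{X^+}^2.E^+$. As the value of $K_X^2.E$ lies in $\{4,2,1\}$ and these three numbers are distinct, the equality $K_X^2.E = K_{X^+}^2.E^+$ pins down $\phi^+$ to be of exactly the same type as $\phi$, which is the first assertion. To produce the tables I would then enumerate the integer solutions of $-K_X^3 = 2K_X^2.E/\alpha$: fixing the type fixes $K_X^2.E$, and by Proposition \ref{E2alphaIsInt} together with $K_X^2.E > 0$ the quantity $\alpha$ is a positive integer, while $-K_X^3$ is an even integer with $2 \le -K_X^3 \le 22$. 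These divisibility and boundedness constraints leave only a handful of pairs $(-K_X^3,\alpha)$ in each case, recorded in Tables \ref{E2E2table}, \ref{E3E3table}, and \ref{E5E5table}.

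I do not expect any deep obstacle here: once Lemma \ref{alphabetaE2E5} is in place, the whole argument is a short intersection-theoretic computation driven by flop invariance, and the table enumeration is immediate. The one step carrying genuine geometric input, and hence the one I would be most careful about, is extracting the per-type numbers $K_X.E^2 = 2$ and $(-K_X)^2.E \in \{4,2,1\}$ uniformly from the explicit normal-bundle data of Theorem \ref{FirstMainThm}; everything downstream is a formal consequence.
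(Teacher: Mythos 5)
Your proposal is correct and follows essentially the same route as the paper: Lemma \ref{alphabetaE2E5} reduces (\ref{Eplusequation}) to $\widetilde{E^+} = -\alpha K_X - E$, the flop-invariant computation of $K_X.\widetilde{E^+}^2$ yields $-K_X^3 = 2K_X^2.E/\alpha$, and symmetry plus $-K_X^3 = -K_{X^+}^3$, $\alpha = \alpha^+$ forces $K_X^2.E = K_{X^+}^2.E^+ \in \{4,2,1\}$, pinning down the type. Your concluding enumeration (positive integer $\alpha$, even $-K_X^3$ in $[2,22]$) is exactly what produces the entries of Tables \ref{E2E2table}, \ref{E3E3table}, and \ref{E5E5table}.
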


To calculate the remaining the numbers on our tables (\ref{E2E2table}),(\ref{E3E3table}),(\ref{E5E5table}), we note the following formulas for $-K_Y^3$, again using Theorem \ref{FirstMainThm}:
$$\begin{array}{cl}
-K_Y^3 = -K_X^3 + 8, & \phi: X \rightarrow Y \text{ an }E2 \text{ contraction};\\
-K_Y^3 = -K_X^3 + 2, & \phi: X \rightarrow Y \text{ an }E3/E4 \text{ contraction} ;\\
-K_Y^3 = -K_X^3 + \frac{1}{2}, & \phi: X \rightarrow Y \text{ an }E5 \text{ contraction}.\\
\end{array}
$$

\subsection{E2-E2} In Table \ref{E2E2table}, when both contractions are of type $E2$, there are three numerical cases.  One case has been shown to exist by Takeuchi, one shown not to exist by A. Kaloghiros, and one case remains open.

\subsection{E3/E4-E3/E4}
In Table \ref{E3E3table}, when both contractions are of type $E3/E4$, both cases have been previously shown to exist. We make one remark regarding the second numerical case.
\begin{remark}
\textit{Case 2}:
Consider a quartic threefold $Y \subset \P^4$ with a single ordinary double point $P \in Y$. We focus on the birational involution $\tau$ on $Y$ corresponding to projection from $P$. A general line in $\P^4$ through $P$ intersects $Y$ at two additional points and $\tau$ interchanges those two points. We show $\tau$ induces a Sarkisov link of type II.

The indeterminacy of the projection $\pi_P: Y \rightarrow \P^3$ from $P$ is resolved by blowing up $P$. We obtain a smooth threefold $X$ and double cover $\psi: X \rightarrow \P^3$ which coincides with the anticanonical morphism of $X$. A general $Y$ contains 24 lines through $P$, so $X$ is weak Fano with 24 flopping curves. The flop $\chi$ is the extension $\tau_X$ of $\tau$ to $X$. Blowing up the flopping curves, we obtain a variety $Z$ on which the extension $\tau_Z$ of $\tau$ is regular. Thus $\chi$ is an Atiyah flop with the corresponding commutative diagram.\\

$$\xymatrix{Z \ar[d] \ar[rr]^{\tau_Z} & & Z \ar[d] \\ X \ar@{-->}[rr]^{\chi = \tau_X} \ar[dd]_{\phi} \ar[dr]^{\psi}& & X \ar[dd]^{\phi} \ar[dl]_{\psi} \\
           & \P^3 & \\ Y \ar@{-->}[ur]^{\pi_P} \ar@{-->}[rr]^{\tau} & & Y \ar@{-->}[ul]_{\pi_P}}$$
\end{remark}

\vspace{1cm}

\subsection{E5-E5} The existence of the one numerical case of type $E5 - E5$ in Table \ref{E5E5table} remains open.

\newpage
\section{Tables}
\label{Tables}
\subsection{E1 - E1}
The following table is a list of all the numerical possibilities for the $E1 - E1$ case, where $r$ and $r^+$ are the index of $Y$ and $Y^+$ in (\ref{Firstfig}) respectively.  All other notations can be found in (\ref{Eplusequation}) and (\ref{Eequation}).  Due to space constraints, missing from the table are the values of $\alpha^+$ and $\beta^+$.  Those can be determined from the given values of $\alpha$ and $\beta$ using (\ref{firstchecks}). There are 111 entries on the table: 13 proven not to exist, 54 proven to exist, and the existence of the remaining cases are unknown.  These cases are denoted by the standard scientific notations ``x", ``\,:)\,", and ``?" respectively.

\label{E1E1table}
\begin{table}[h]
\caption{E1-E1}
\begin{center}
\begin{tabular}{|c|c|c|c|c|c|c|c|c|c|c|c|c|c|c|}
\hline
\textit{No.} & $-K_X^3$ & $-K_Y^3$ & $-K_{Y^+}^3$ & $\alpha$ & $\beta$ & $r$ & $d$ & $g$ & $r^+$ & $d^+$ & $g^+$ & $e/r^3$ & Exist? & Ref \\ \hline \hline
$\textit{1.}$ & 2 & 6 & 6 & 3 & -1 & 1 & 1 & 0 & 1 & 1 & 0 & 47 & :) & [Isk78] \\ \hline
$\textit{2.}$ & 2 & 8 & 8 & 4 & -1 & 1 & 2 & 0 & 1 & 2 & 0 & 88 & ? &  \\ \hline
$\textit{3.}$ & 2 & 10 & 10 & 5 & -1 & 1 & 3 & 0 & 1 & 3 & 0 & 153 & :) & \cite{ACM11} \\ \hline
$\textit{4.}$ & 2 & 12 & 12 & 6 & -1 & 1 & 4 & 0 & 1 & 4 & 0 & 248 & :) &  \cite{ACM11}\\ \hline
$\textit{5.}$ & 2 & 14 & 14 & 7 & -1 & 1 & 5 & 0 & 1 & 5 & 0 & 379 & :) & \cite{ACM11} \\ \hline
$\textit{6.}$ & 2 & 16 & 16 & 8 & -1 & 1 & 6 & 0 & 1 & 6 & 0 & 552 & :) &  \cite{ACM11}\\ \hline
$\textit{7.}$ & 2 & 18 & 18 & 9 & -1 & 1 & 7 & 0 & 1 & 7 & 0 & 773 & :) &  \cite{ACM11}\\ \hline
$\textit{8.}$ & 2 & 22 & 22 & 11 & -1 & 1 & 9 & 0 & 1 & 9 & 0 & 1383 & :) & \cite{ACM11} \\ \hline
$\textit{9.}$ & 2 & 8 & 8 & 3 & -1 & 1 & 3 & 1 & 1 & 3 & 1 & 21 & x & (\ref{e1e1case9}) \\ \hline
$\textit{10.}$ & 2 & 10 & 10 & 4 & -1 & 1 & 4 & 1 & 1 & 4 & 1 & 56 & ? &  \\ \hline
$\textit{11.}$ & 2 & 12 & 12 & 5 & -1 & 1 & 5 & 1 & 1 & 5 & 1 & 115 & :) &  \cite{ACM11} \\ \hline
$\textit{12.}$ & 2 & 14 & 14 & 6 & -1 & 1 & 6 & 1 & 1 & 6 & 1 & 204 & :) & \cite{ACM11} \\ \hline
$\textit{13.}$ & 2 & 16 & 16 & 7 & -1 & 1 & 7 & 1 & 1 & 7 & 1 & 329 & :) &  \cite{ACM11} \\ \hline
$\textit{14.}$ & 2 & 18 & 18 & 8 & -1 & 1 & 8 & 1 & 1 & 8 & 1 & 496 & :) &   \cite{ACM11} \\ \hline
$\textit{15.}$ & 2 & 22 & 22 & 10 & -1 & 1 & 10 & 1 & 1 & 10 & 1 & 980 & :) &  \cite{ACM11} \\ \hline
$\textit{16.}$ & 2 & 12 & 12 & 4 & -1 & 1 & 6 & 2 & 1 & 6 & 2 & 24 & x &  \cite{ACM11} \\ \hline
$\textit{17.}$ & 2 & 14 & 14 & 5 & -1 & 1 & 7 & 2 & 1 & 7 & 2 & 77 & x & \cite{ACM11} \\ \hline
$\textit{18.}$ & 2 & 16 & 16 & 6 & -1 & 1 & 8 & 2 & 1 & 8 & 2 & 160 & :) &  \cite{ACM11} \\ \hline
$\textit{19.}$ & 2 & 18 & 18 & 7 & -1 & 1 & 9 & 2 & 1 & 9 & 2 & 279 & :) &  \cite{ACM11} \\ \hline
$\textit{20.}$ & 2 & 22 & 22 & 9 & -1 & 1 & 11 & 2 & 1 & 11 & 2 & 649 & :) &   \cite{ACM11} \\ \hline
$\textit{21.}$ & 2 & 16 & 16 & 5 & -1 & 1 & 9 & 3 & 1 & 9 & 3 & 39 & x &   \cite{ACM11}  \\ \hline
$\textit{22.}$ & 2 & 18 & 18 & 6 & -1 & 1 & 10 & 3 & 1 & 10 & 3 & 116 & x &  \cite{ACM11} \\ \hline
$\textit{23.}$ & 2 & 22 & 22 & 8 & -1 & 1 & 12 & 3 & 1 & 12 & 3 & 384 & :) &  \cite{ACM11} \\ \hline
$\textit{24.}$ & 2 & 18 & 18 & 5 & -1 & 1 & 11 & 4 & 1 & 11 & 4 & 1 & x &  \cite{ACM11} \\ \hline
$\textit{25.}$ & 2 & 22 & 22 & 7 & -1 & 1 & 13 & 4 & 1 & 13 & 4 & 179 & x &  \cite{ACM11} \\ \hline
$\textit{26.}$ & 2 & 22 & 22 & 6 & -1 & 1 & 14 & 5 & 1 & 14 & 5 & 28 & x &  \cite{ACM11} \\ \hline
$\textit{27.}$ & 2 & 8 & 8 & 4 & -1 & 2 & 1 & 0 & 2 & 1 & 0 & 11 & x & (\ref{shtest1}) \\ \hline
$\textit{28.}$ & 2 & 16 & 16 & 8 & -1 & 2 & 3 & 0 & 2 & 3 & 0 & 69 & ? &  \\ \hline
$\textit{29.}$ & 2 & 24 & 24 & 12 & -1 & 2 & 5 & 0 & 2 & 5 & 0 & 223 & :) & (\ref{MplusFmethod}) \\ \hline
$\textit{30.}$ & 2 & 32 & 32 & 16 & -1 & 2 & 7 & 0 & 2 & 7 & 0 & 521 & :) & \cite{ACM11} \\ \hline
\end{tabular}
\label{tb:E1E1Table1}
\end{center}
\end{table}

\begin{table}
\caption{E1-E1 (continued)}
\begin{center}
\begin{tabular}{|c|c|c|c|c|c|c|c|c|c|c|c|c|c|c|}
\hline
\textit{No.} & $-K_X^3$ & $-K_Y^3$ & $-K_{Y^+}^3$ & $\alpha$ & $\beta$ & $r$ & $d$ & $g$ & $r^+$ & $d^+$ & $g^+$ & $e/r^3$ & Exist? & Ref \\ \hline \hline
$\textit{31.}$ & 2 & 40 & 40 & 20 & -1 & 2 & 9 & 0 & 2 & 9 & 0 & 1011 & :) &  \cite{ACM11}  \\ \hline
$\textit{32.}$ & 2 & 16 & 16 & 6 & -1 & 2 & 4 & 2 & 2 & 4 & 2 & 20 & x & (\ref{shtest1}) \\ \hline
$\textit{33.}$ & 2 & 24 & 24 & 10 & -1 & 2 & 6 & 2 & 2 & 6 & 2 & 114 & :) & (\ref{MplusFmethod})  \\ \hline
$\textit{34.}$ & 2 & 32 & 32 & 14 & -1 & 2 & 8 & 2 & 2 & 8 & 2 & 328 & :) & \cite{ACM11} \\ \hline
$\textit{35.}$ & 2 & 40 & 40 & 18 & -1 & 2 & 10 & 2 & 2 & 10 & 2 & 710 & :) &  \cite{ACM11} \\ \hline
$\textit{36.}$ & 2 & 24 & 24 & 8 & -1 & 2 & 7 & 4 & 2 & 7 & 4 & 41 & x & (\ref{shtest1}) \\ \hline
$\textit{37.}$ & 2 & 32 & 32 & 12 & -1 & 2 & 9 & 4 & 2 & 9 & 4 & 183 & :) & \cite{ACM11}  \\ \hline
$\textit{38.}$ & 2 & 40 & 40 & 16 & -1 & 2 & 11 & 4 & 2 & 11 & 4 & 469 & :) &  \cite{ACM11}  \\ \hline
$\textit{39.}$ & 2 & 32 & 32 & 10 & -1 & 2 & 10 & 6 & 2 & 10 & 6 & 80 & ? &  \\ \hline
$\textit{40.}$ & 2 & 40 & 40 & 14 & -1 & 2 & 12 & 6 & 2 & 12 & 6 & 282 & :) &  \cite{ACM11}  \\ \hline
$\textit{41.}$ & 2 & 32 & 32 & 8 & -1 & 2 & 11 & 8 & 2 & 11 & 8 & 13 & :) & \cite{ACM11}  \\ \hline
$\textit{42.}$ & 2 & 40 & 40 & 12 & -1 & 2 & 13 & 8 & 2 & 13 & 8 & 143 & :) &  \cite{ACM11}  \\ \hline
$\textit{43.}$ & 2 & 40 & 40 & 10 & -1 & 2 & 14 & 10 & 2 & 14 & 10 & 46 & x &  \cite{ACM11} \\ \hline
$\textit{44.}$ & 2 & 54 & 54 & 25 & -1 & 3 & 9 & 2 & 3 & 9 & 2 & 571 & :) & \cite{ACM11} \\ \hline
$\textit{45.}$ & 2 & 54 & 54 & 22 & -1 & 3 & 10 & 5 & 3 & 10 & 5 & 372 & :) & \cite{ACM11} \\ \hline
$\textit{46.}$ & 2 & 54 & 54 & 19 & -1 & 3 & 11 & 8 & 3 & 11 & 8 & 221 & :) & \cite{ACM11} \\ \hline
$\textit{47.}$ & 2 & 54 & 54 & 16 & -1 & 3 & 12 & 11 & 3 & 12 & 11 & 112 & ? & \\ \hline
$\textit{48.}$ & 2 & 54 & 54 & 13 & -1 & 3 & 13 & 14 & 3 & 13 & 14 & 39 & :) & \cite{ACM11} \\ \hline
$\textit{49.}$ & 2 & 64 & 64 & 30 & -1 & 4 & 8 & 2 & 4 & 8 & 2 & 418 & :) & (\ref{MplusFmethod})  \\ \hline
$\textit{50.}$ & 2 & 64 & 64 & 26 & -1 & 4 & 9 & 6 & 4 & 9 & 6 & 261 & :) & (\ref{MplusFmethod}) \\ \hline
$\textit{51.}$ & 2 & 64 & 64 & 22 & -1 & 4 & 10 & 10 & 4 & 10 & 10 & 146 & :) & (\ref{MplusFmethod}) \\ \hline
$\textit{52.}$ & 2 & 64 & 64 & 18 & -1 & 4 & 11 & 14 & 4 & 11 & 14 & 67 & :) & (\ref{MplusFmethod})  \\ \hline
$\textit{53.}$ & 2 & 64 & 64 & 14 & -1 & 4 & 12 & 18 & 4 & 12 & 18 & 18 & x & (\ref{shtest1}) \\ \hline
$\textit{54.}$ & 4 & 10 & 10 & 2 & -1 & 1 & 2 & 0 & 1 & 2 & 0 & 28 & :) & [Tak89] \\ \hline
$\textit{55.}$ & 4 & 14 & 14 & 3 & -1 & 1 & 4 & 0 & 1 & 4 & 0 & 68 & :) & \cite{ACM11} \\ \hline
$\textit{56.}$ & 4 & 18 & 18 & 4 & -1 & 1 & 6 & 0 & 1 & 6 & 0 & 144 & :) &   \cite{ACM11} \\ \hline
$\textit{57.}$ & 4 & 22 & 22 & 5 & -1 & 1 & 8 & 0 & 1 & 8 & 0 & 268 & :) &  \cite{ACM11} \\ \hline
$\textit{58.}$ & 4 & 12 & 12 & 2 & -1 & 1 & 4 & 1 & 1 & 4 & 1 & 8 & x &   \cite{ACM11} \\ \hline
$\textit{59.}$ & 4 & 16 & 16 & 3 & -1 & 1 & 6 & 1 & 1 & 6 & 1 & 42 & ? &  \\ \hline
$\textit{60.}$ & 4 & 18 & 18 & 3 & -1 & 1 & 8 & 2 & 1 & 8 & 2 & 16 & x &  \cite{ACM11} \\ \hline
$\textit{61.}$ & 4 & 22 & 22 & 4 & -1 & 1 & 10 & 2 & 1 & 10 & 2 & 80 & ? &  \\ \hline
$\textit{62.}$ & 4 & 16 & 10 & 1.5 & -0.5 & 2 & 3 & 1 & 1 & 3 & 1 & 3 & x & (\ref{shtest1}) \\ \hline
$\textit{63.}$ & 4 & 24 & 14 & 2.5 & -0.5 & 2 & 5 & 1 & 1 & 5 & 1 & 25 & :) & [Isk78] \\ \hline
$\textit{64.}$ & 4 & 32 & 18 & 3.5 & -0.5 & 2 & 7 & 1 & 1 & 7 & 1 & 77 & :) & \cite{ACM11} \\ \hline
$\textit{65.}$ & 4 & 40 & 22 & 4.5 & -0.5 & 2 & 9 & 1 & 1 & 9 & 1 & 171 & :) &  \cite{ACM11} \\ \hline
$\textit{66.}$ & 4 & 24 & 24 & 4 & -1 & 2 & 6 & 3 & 2 & 6 & 3 & 6 & x & (\ref{shtest1}) \\ \hline
$\textit{67.}$ & 4 & 32 & 32 & 6 & -1 & 2 & 8 & 3 & 2 & 8 & 3 & 40 & ? &  \\ \hline
$\textit{68.}$ & 4 & 40 & 40 & 8 & -1 & 2 & 10 & 3 & 2 & 10 & 3 & 110 & :) &  \cite{ACM11} \\ \hline
$\textit{69.}$ & 4 & 40 & 40 & 6 & -1 & 2 & 12 & 7 & 2 & 12 & 7 & 18 & :) &  \cite{ACM11} \\ \hline
$\textit{70.}$ & 4 & 54 & 16 & 11/3 & -1/3 & 3 & 9 & 3 & 1 & 5 & 0 & 103 & :) & \cite{ACM11} \\ \hline
\end{tabular}
\label{tb:E1E1Table2}
\end{center}
\end{table}

\begin{table}
\caption{E1-E1 (continued)}
\begin{center}
\begin{tabular}{|c|c|c|c|c|c|c|c|c|c|c|c|c|c|c|}
\hline
\textit{No.} & $-K_X^3$ & $-K_Y^3$ & $-K_{Y^+}^3$ & $\alpha$ & $\beta$ & $r$ & $d$ & $g$ & $r^+$ & $d^+$ & $g^+$ & $e/r^3$ & Exist? & Ref \\ \hline \hline
$\textit{71.}$ & 4 & 54 & 54 & 13 & -1 & 3 & 8 & 0 & 3 & 8 & 0 & 164 & :) &   \cite{ACM11} \\ \hline
$\textit{72.}$ & 4 & 54 & 54 & 10 & -1 & 3 & 10 & 6 & 3 & 10 & 6 & 60 & ? &  \\ \hline
$\textit{73.}$ & 4 & 54 & 54 & 7 & -1 & 3 & 12 & 12 & 3 & 12 & 12 & 4 & x & (\ref{shtest1}) \\ \hline
$\textit{74.}$ & 4 & 64 & 12 & 2.75 & -0.25 & 4 & 9 & 7 & 1 & 3 & 0 & 45 & :) & \small{[Tak89,IP99]} \\ \hline
$\textit{75.}$ & 4 & 64 & 64 & 14 & -1 & 4 & 8 & 3 & 4 & 8 & 3 & 82 & :) & (\ref{MplusFmethod}) \\ \hline
$\textit{76.}$ & 4 & 64 & 64 & 10 & -1 & 4 & 10 & 11 & 4 & 10 & 11 & 20 & :) & \cite{JPR05}  \\ \hline
$\textit{77.}$ & 6 & 10 & 10 & 1 & -1 & 1 & 1 & 0 & 1 & 1 & 0 & 11 & :) & [Isk78] \\ \hline
$\textit{78.}$ & 6 & 16 & 16 & 2 & -1 & 1 & 4 & 0 & 1 & 4 & 0 & 32 & ? &  \\ \hline
$\textit{79.}$ & 6 & 22 & 22 & 3 & -1 & 1 & 7 & 0 & 1 & 7 & 0 & 89 & :) &  \cite{ACM11} \\ \hline
$\textit{80.}$ & 6 & 18 & 18 & 2 & -1 & 1 & 6 & 1 & 1 & 6 & 1 & 12 & ? &  \\ \hline
$\textit{81.}$ & 6 & 40 & 18 & 2.5 & -0.5 & 2 & 9 & 2 & 1 & 5 & 0 & 47 & :) &  \cite{ACM11} \\ \hline
$\textit{82.}$ & 6 & 16 & 16 & 2 & -1 & 2 & 2 & 0 & 2 & 2 & 0 & 4 & x & (\ref{shtest1}) \\ \hline
$\textit{83.}$ & 6 & 40 & 40 & 6 & -1 & 2 & 8 & 0 & 2 & 8 & 0 & 82 & :) &  \cite{ACM11}  \\ \hline
$\textit{84.}$ & 6 & 32 & 32 & 4 & -1 & 2 & 7 & 2 & 2 & 7 & 2 & 17 & :) & \cite{ACM11} \\ \hline
$\textit{85.}$ & 6 & 40 & 40 & 4 & -1 & 2 & 11 & 6 & 2 & 11 & 6 & 1 & x & (\ref{shtest1}) \\ \hline
$\textit{86.}$ & 6 & 54 & 22 & 8/3 & -1/3 & 3 & 8 & 1 & 1 & 8 & 1 & 48 & :) & \cite{ACM11} \\ \hline
$\textit{87.}$ & 6 & 54 & 12 & 5/3 & -1/3 & 3 & 10 & 7 & 1 & 2 & 0 & 14 & :) & [Tak89] \\ \hline
$\textit{88.}$ & 6 & 54 & 54 & 7 & -1 & 3 & 9 & 4 & 3 & 9 & 4 & 31 & :) & \cite{ACM11} \\ \hline
$\textit{89.}$ & 6 & 64 & 40 & 4.5 & -0.5 & 4 & 8 & 4 & 2 & 10 & 4 & 24 & :) & (\ref{MplusFmethod})  \\ \hline
$\textit{90.}$ & 6 & 64 & 64 & 10 & -1 & 4 & 7 & 0 & 4 & 7 & 0 & 47 & :) & (\ref{MplusFmethod})  \\ \hline
$\textit{91.}$ & 6 & 64 & 64 & 6 & -1 & 4 & 10 & 12 & 4 & 10 & 12 & 2 & x & (\ref{shtest1}) \\ \hline
$\textit{92.}$ & 8 & 14 & 14 & 1 & -1 & 1 & 2 & 0 & 1 & 2 & 0 & 10 & :) & [Tak89] \\ \hline
$\textit{93.}$ & 8 & 22 & 22 & 2 & -1 & 1 & 6 & 0 & 1 & 6 & 0 & 36 & :) &  \cite{ACM11} \\ \hline
$\textit{94.}$ & 8 & 40 & 16 & 1.5 & -0.5 & 2 & 9 & 3 & 1 & 3 & 0 & 12 & :) &  \cite{ACM11}  \\ \hline
$\textit{95.}$ & 8 & 24 & 24 & 2 & -1 & 2 & 4 & 1 & 2 & 4 & 1 & 2 & x & (\ref{shtest1}) \\ \hline
$\textit{96.}$ & 8 & 40 & 40 & 4 & -1 & 2 & 8 & 1 & 2 & 8 & 1 & 28 & :) &  \cite{ACM11}  \\ \hline
$\textit{97.}$ & 8 & 54 & 18 & 5/3 & -1/3 & 3 & 8 & 2 & 1 & 4 & 0 & 20 & :) & \cite{ACM11}  \\ \hline
$\textit{98.}$ & 8 & 64 & 22 & 1.75 & -0.25 & 4 & 7 & 1 & 1 & 7 & 1 & 14 & :) & (\ref{MplusFmethod})  \\ \hline
$\textit{99.}$ & 8 & 64 & 64 & 6 & -1 & 4 & 8 & 5 & 4 & 8 & 5 & 10 & :) & \cite{BL11}  \\ \hline
$\textit{100.}$ & 10 & 18 & 18 & 1 & -1 & 1 & 3 & 0 & 1 & 3 & 0 & 9 & :) &  \cite{ACM11}  \\ \hline
$\textit{101.}$ & 10 & 40 & 22 & 1.5 & -0.5 & 2 & 7 & 0 & 1 & 5 & 0 & 18 & :) &  \cite{ACM11}  \\ \hline
$\textit{102.}$ & 10 & 54 & 54 & 4 & -1 & 3 & 8 & 3 & 3 & 8 & 3 & 8 & ? &   \\ \hline
$\textit{103.}$ & 10 & 64 & 32 & 2.5 & -0.5 & 4 & 7 & 2 & 2 & 5 & 0 & 9 & :) & (\ref{MplusFmethod}) \\ \hline
$\textit{104.}$ & 12 & 22 & 22 & 1 & -1 & 1 & 4 & 0 & 1 & 4 & 0 & 8 & :) &  \cite{ACM11} \\ \hline
$\textit{105.}$ & 12 & 54 & 40 & 7/3 & -2/3 & 3 & 7 & 1 & 2 & 7 & 1 & 7 & :) & \cite{ACM11}  \\ \hline
$\textit{106.}$ & 12 & 64 & 16 & 0.75 & -0.25 & 4 & 7 & 3 & 1 & 1 & 0 & 5 & :) & [Isk78] \\ \hline
$\textit{107.}$ & 14 & 40 & 40 & 2 & -1 & 2 & 6 & 0 & 2 & 6 & 0 & 6 & :) & \cite{ChSh11}  \\ \hline
$\textit{108.}$ & 14 & 54 & 18 & 2/3 & -1/3 & 3 & 7 & 2 & 1 & 1 & 0 & 4 & :) & [Isk78] \\ \hline
$\textit{109.}$ & 16 & 54 & 22 & 2/3 & -1/3 & 3 & 6 & 0 & 1 & 2 & 0 & 4 & :) & [Tak89] \\ \hline
$\textit{110.}$ & 18 & 40 & 22 & 0.5 & -0.5 & 2 & 5 & 0 & 1 & 1 & 0 & 3 & :) & [Isk78] \\ \hline
$\textit{111.}$ & 22 & 64 & 64 & 2 & -1 & 4 & 5 & 0 & 4 & 5 & 0 & 1 & :) & (\ref{ex111}) \\  \hline
\end{tabular}
\label{tb:E1E1Table3}
\end{center}
\end{table}

\newpage
\subsection{E1 - E2}
The following table is a list of all the numerical possibilities for the $E1 - E2$ case, where $r$ is the index of $Y$ in (\ref{Firstfig}).  The index of $Y^+$ is always 1.  All other notations can be found in (\ref{Eplusequation}) and (\ref{Eequation}).  Due to space constraints, missing from the table are the values of $\alpha^+$ and $\beta^+$.  Those can be determined from the given values of $\alpha$ and $\beta$ using (\ref{firstchecks}). There are 3 entries on the table, all known to exist.  Their geometric realizations are shown in Takeuchi's paper (\cite{Tak89}).

\label{E1E2table}
\begin{table}[h]
\caption{E1-E2}
\begin{center}
\begin{tabular}{|c|c|c|c|c|c|c|c|c|c|c|c|}
\hline
\textit{No.} & $-K_X^3$ & $-K_Y^3$ & $-K_{Y^+}^3$ & $\alpha$ & $\beta$ & $r$ & $d$ & $g$ & $e/r^3$ & Exist? & Ref \\ \hline \hline
$\textit{1.}$ & 4 & 40 & 12 & 5/2 & -1/2 & 2 & 12 & 7 & 24 & :) & \cite{Tak89} \\ \hline
$\textit{2.}$ & 6 & 24 & 14 & 3/2 & -1/2 & 2 & 4 & 0 & 16 & :) & \cite{Tak89} \\ \hline
$\textit{3.}$ & 14 & 64 & 22 & 3/4 & -1/4 & 4 & 6 & 0 & 6 & :) & \cite{Tak89} \\ \hline
\end{tabular}
\label{tb:E1E2Table}
\end{center}
\end{table}

\newpage

\subsection{E1 - E3/E4}
The following table is a list of all the numerical possibilities for the $E1 - E3/E4$ case, where $r$ is the index of $Y$ in (\ref{Firstfig}).  The index of $Y^+$ can always be assumed to equal 1.  All other notations can be found in (\ref{Eplusequation}) and (\ref{Eequation}).  Due to space constraints, missing from the table are the values of $\alpha^+$ and $\beta^+$.  Those can be determined from the given values of $\alpha$ and $\beta$ using (\ref{firstchecks}). There are 7 entries on the table, 3 which do not exist and 4 which are geometrically realizable.
\label{E1E3table}
\begin{table}[h]
\caption{E1-E3}
\begin{center}
\begin{tabular}{|c|c|c|c|c|c|c|c|c|c|c|c|}
\hline
\textit{No.} & $-K_X^3$ & $-K_Y^3$ & $-K_{Y^+}^3$ & $\alpha$ & $\beta$ & $r$ & $d$ & $g$ & $e/r^3$ & Exist? & Ref \\ \hline \hline
$\textit{1.}$ & 4 & 24 & 6 & 1.5 & -0.5 & 2 & 6 & 3 & 9 & x & \ref{e1e3no12} \\ \hline
$\textit{2.}$ & 4 & 54 & 6 & 5/3 & -1/3 & 3 & 12 & 12 & 8 & x & \ref{e1e3no12} \\ \hline
$\textit{3.}$ & 6 & 14 & 8 & 1 & -1 & 1 & 4 & 1 & 4 & x & \ref{e1e3no3} \\ \hline
$\textit{4.}$ & 10 & 64 & 12 & 0.75 & -0.25 & 4 & 8 & 6 & 5 & :) & \ref{e1e3no4} \\ \hline
$\textit{5.}$ & 12 & 54 & 14 & 2/3 & -1/3 & 3 & 8 & 4 & 4 & :) & \ref{e1e3no5} \\ \hline
$\textit{6.}$ & 14 & 32 & 16 & 0.5 & -0.5 & 2 & 4 & 0 & 4 & :) & \ref{e1e3no6} \\ \hline
$\textit{7.}$ & 16 & 40 & 18 & 0.5 & -0.5 & 2 & 6 & 1 & 3 & :) & \ref{e1e3no7} \\ \hline
\end{tabular}
\label{tb:E1E3Table}
\end{center}
\end{table}

\newpage
\subsection{E1 - E5}
The following table is a list of all the numerical possibilities for the $E1 - E5$ case, where $r$ is the index of $Y$ in (\ref{Firstfig}). All other notation can be found in (\ref{Eplusequation}) and (\ref{Eequation}).  Due to space constraints, missing from the table are the values of $\alpha^+$ and $\beta^+$.  Those can be determined from the given values of $\alpha$ and $\beta$ using (\ref{firstchecks}). There are 7 entries on the table, 2 known not to exist and 5 which are geometrically realizable.
\label{E1E5table}
\begin{table}[h]
\caption{E1-E5}
\begin{center}
\begin{tabular}{|c|c|c|c|c|c|c|c|c|c|c|c|}
\hline
\textit{No.} & $-K_X^3$ & $-K_Y^3$ & $-K_{Y^+}^3$ & $\alpha$ & $\beta$ & $r$ & $d$ & $g$ & $e/r^3$ & Exist? & Ref \\ \hline \hline
$\textit{1.}$ & 4 & 10 & 9/2 & 1 & -1 & 1 & 3 & 1 & 6 & x & \ref{e1e5no1} \\ \hline
$\textit{2.}$ & 4 & 40 & 9/2 & 1.5 & -0.5 & 2 & 13 & 9 & 1 & x & \ref{e1e5no2} \\ \hline
$\textit{3.}$ & 8 & 64 & 17/2 & 0.75 & -0.25 & 4 & 9 & 9 & 6 & :) & \ref{e1e5no1} \\ \hline
$\textit{4.}$ & 10 & 24 & 21/2 & 0.5 & -0.5 & 2 & 3 & 0 & 6 & :) & \ref{e1e5no1} \\ \hline
$\textit{5.}$ & 10 & 54 & 21/2 & 2/3 & -1/3 & 3 & 9 & 6 & 5 & :) & \ref{e1e5no1} \\ \hline
$\textit{6.}$ & 12 & 32 & 25/2 & 0.5 & -0.5 & 2 & 5 & 1 & 5 & :) & \ref{e1e5no1} \\ \hline
$\textit{7.}$ & 14 & 40 & 29/2 & 0.5 & -0.5 & 2 & 7 & 2 & 4 & :) & \ref{e1e5no1} \\ \hline
\end{tabular}
\label{tb:E1E5Table}
\end{center}
\end{table}

\newpage

\subsection{E2 - E2}
The following table is a list of all the numerical possibilities for the $E2 - E2$ case.  The Fano indices of $Y^+$ and $Y$ can always be assumed to equal 1 as shown in the proof of Proposition \ref{E2alphaIsInt}.  All other notations can be found in (\ref{Eplusequation}) and (\ref{Eequation}).  Due to space constraints, missing from the table are the values of $\alpha^+$ and $\beta^+$ which are equal to $\alpha$ and $\beta$ respectively using (\ref{firstchecks}). There are 3 entries on the table, 1 known not to exist, 1 which is geometrically realizable, and 1 unknown.
\label{E2E2table}
\begin{table}[!h]
\caption{E2-E2}
\begin{center}
\begin{tabular}{|c|c|c|c|c|c|c|c|}
\hline
\textit{No.} & $-K_X^3$ & $-K_{Y}^3$ & $\alpha$ & $\beta$  & $e$ & Exist? & Ref \\ \hline
$\textit{1.}$ & 8 & 16 & 1 & -1 & 12 & :)  & [Tak89] \\ \hline
$\textit{2.}$ & 4 & 12 & 2 & -1 & 30 & x  & [Kal09] \\ \hline
$\textit{3.}$ & 2 & 10 & 4 & -1 & 90 & ? &  \\ \hline
\end{tabular}
\label{tb:E2E2Table}
\end{center}
\end{table}

\clearpage
\newpage

\subsection{E3/4 - E3/4}
The following table is a list of all the numerical possibilities for the $E3/4 - E3/4$ case. All notation can be found in (\ref{Eplusequation}) and (\ref{Eequation}).  Due to space constraints, missing from the table are the values of $\alpha^+$ and $\beta^+$, which are equal to $\alpha$ and $\beta$ respectively using (\ref{firstchecks}). There are 2 entries on the table, both known to exist.
\label{E3E3table}
\begin{table}[h]
\caption{E3/4-E3/4}
\begin{center}
\begin{tabular}{|c|c|c|c|c|c|c|c|}
\hline
\textit{No.} & $-K_X^3$ & $-K_{Y}^3$ & $\alpha$ & $\beta$  & $e$ & Exist? & Ref \\ \hline
$\textit{1.}$ & 4 & 6 & 1 & -1 & 12 & :) & [Kal09] \\ \hline
$\textit{2.}$ & 2 & 4 & 2 & -1 & 24 & :) & [Puk88] \\ \hline
\end{tabular}
\label{tb:E3E3Table}
\end{center}
\end{table}
\clearpage
\newpage

\subsection{E5 - E5}
The following table is a list of the only numerical possibility for the $E5 - E5$ case. All notation can be found in (\ref{Eplusequation}) and (\ref{Eequation}).  Due to space constraints, missing from the table are the values of $\alpha^+$ and $\beta^+$ which are equal to $\alpha$ and $\beta$ respectively using (\ref{firstchecks}).
\label{E5E5table}
\begin{table}[h]
\caption{E5-E5}
\begin{center}
\begin{tabular}{|c|c|c|c|c|c|c|c|}
\hline
\textit{No.} & $-K_X^3$ & $-K_{Y}^3$ & $\alpha$ & $\beta$  & $e$ & Exist? & Ref \\ \hline
$\textit{1.}$ & 2 & 2.5 & 1 & -1 & 15 & ? &  \\ \hline
\end{tabular}
\label{tb:E5E5Table}
\end{center}
\end{table}
\newpage

\newpage

\section*{Acknowledgements}
We would like to thank our advisor, Dr. V.V. Shokurov, for his constant patience, help and suggestions.  We would also like to thank the faculty of the Steklov Institute who were so hospitable during our stay there while working on this paper.  In particular, we would especially like to thank Dr. Y. Prokhorov for his helpful and insightful comments into our research.  We would also like to thank Dr. S. Zucker for his time, advice, and helpfulness in the preparation and presentation of this paper.

\newpage

\end{document}